\documentclass[final,intlimits]{article}

\usepackage{mathtools}
\usepackage{amssymb}
\usepackage{amsthm}
\usepackage{xcolor}
\usepackage{tikz}
\usepackage{fullpage}

\DeclareSymbolFont{bbold}{U}{bbold}{m}{n}
\DeclareSymbolFontAlphabet{\mathbbold}{bbold}

\newcommand{\N}{\mathbb{N}}
\newcommand{\Z}{\mathbb{Z}}

\newcommand{\R}{\mathbb{R}}
\newcommand{\C}{\mathbb{C}}

\newcommand{\1}{\mathbbold{1}}
\newcommand{\calL}{\mathcal{L}}
\newcommand{\calK}{\mathcal{K}}
\newcommand{\calT}{\mathcal{T}}

\newcommand{\BO}{\mathrm{BO}}
\newcommand{\BDO}{\mathrm{BDO}}

\DeclareMathOperator{\ran}{ran}

\newcommand{\ess}{{\rm ess}}

\newcommand{\calA}{\mathcal{A}}
\DeclareMathOperator*{\slim}{s-lim\,}
\DeclareMathOperator*{\wlim}{w-lim\,}
\newcommand{\bdry}[1]{\Gamma #1}
\newcommand{\A}{A}
\newcommand{\Aprime}{A$^{\prime}$}

\newcommand{\dist}{\mathrm{dist}}

\makeatletter

\makeatother

\DeclareMathOperator{\prop}{prop}
\DeclareMathOperator{\spt}{spt}
\DeclareMathOperator{\diam}{diam}

\renewcommand{\Re}{\operatorname{Re}}
\renewcommand{\Im}{\operatorname{Im}}

\providecommand{\dupa}[2]{\left\langle #1 , #2 \right\rangle}

\newcommand{\from}{\colon}

\let\leq\leqslant

\let\geq\geqslant

\makeatletter
\def\@row#1,{#1\@ifnextchar;{\@gobble}{&\@row}}
\def\@matrix{%
    \expandafter\@row\my@arg,;%
    \@ifnextchar({\\ \get@in@paren{\@matrix}}{\after@matrix}%
    }
\def\matrixtype#1#2#3{%
    \ifmmode\def\after@matrix{\end{#2}\right#3}%
    \else\def\after@matrix{\end{#2}\right#3$}$\fi
    \left#1\begin{#2}\get@in@paren{\@matrix}%
    }
\def\@column#1,{#1\@ifnextchar;{\@gobble}{\\ \@column}}
\newcommand\vect{}
\def\svect(#1){\left(\begin{smallmatrix}\@column#1,;\end{smallmatrix}\right)}
\def\vect{\get@in@paren{\@vect}}
\def\@vect{\left(\begin{matrix}\expandafter\@column\my@arg,;\end{matrix}\right)}
\def\get@in@paren#1({\def\my@arg{}\def\my@rest{}\def\after@get{#1}\get@arg}
\let\e@a\expandafter
\def\get@arg#1){\e@a\kl@test\my@rest#1(;}
\def\kl@test#1(#2;{\e@a\def\e@a\my@arg\e@a{\my@arg#1}%
                   \ifx:#2:\let\my@exec\after@get
                   \else\let\my@exec\get@arg
                        \e@a\def\e@a\my@arg\e@a{\my@arg(}%
                        \def@rest#2;%
                   \fi\my@exec}
\def\def@rest#1(;{\def\my@rest{#1\kl@zu}}
\def\kl@zu{)}

\makeatletter
\newcommand\MyPairedDelimiter{%
  \@ifstar{\My@Paired@Delimiter{{}}}
          {\My@Paired@Delimiter{}}%
}
\newcommand\My@Paired@Delimiter[4]{%
  \newcommand#2{%
    \@ifstar{\start@PD{#1}{\delimitershortfall=-1sp}{#3}{#4}}
            {\start@PD{#1}{}{#3}{#4}}%
  }%
}
\newcommand\start@PD[5]{%
  #1\mathopen{\mathpalette\put@delim@helper{\put@delim{#2}{#3}{.}{#5}}}%
  #5%
  \mathclose{\mathpalette\put@delim@helper{\put@delim{#2}{.}{#4}{#5}}}%
}
\newcommand\put@delim@helper[2]{%
  \hbox{$\m@th\nulldelimiterspace=0pt #2#1$}%
}
\newcommand\put@delim[5]{%
  \setbox\z@\hbox{$\m@th#5{#4}$}%
  \setbox\tw@\null
  \ht\tw@\ht\z@ \dp\tw@\dp\z@
  #1#5%
  \left#2\box\tw@\right#3%
}

\makeatother
\MyPairedDelimiter*{\abs}{\lvert}{\rvert}
\MyPairedDelimiter*{\norm}{\lVert}{\rVert}
\MyPairedDelimiter{\set}{\{}{\}}
\newcommand{\tripnorm}[1]{{\left\vert\kern-0.25ex\left\vert\kern-0.25ex\left\vert #1 
    \right\vert\kern-0.25ex\right\vert\kern-0.25ex\right\vert}}

\theoremstyle{plain} 
\newtheorem{theorem}{Theorem}[section]
\newtheorem{corollary}[theorem]{Corollary}
\newtheorem{lemma}[theorem]{Lemma}
\newtheorem{proposition}[theorem]{Proposition}

\newtheorem{assumption}[theorem]{Assumption}
\theoremstyle{definition}

\newtheorem{definition}[theorem]{Definition}
\newtheorem{remark}[theorem]{Remark}

\usepackage{enumitem}

\setenumerate[1]{nolistsep} 
\setenumerate[2]{nolistsep} 

\setcounter{secnumdepth}{1}
\setcounter{tocdepth}{1}

\newcommand{\Hmm}[1]{\leavevmode{\marginpar{\tiny%
$\hbox to 0mm{\hspace*{-0.5mm}$\leftarrow$\hss}%
\vcenter{\vrule depth 0.1mm height 0.1mm width \the\marginparwidth}%
\hbox to 0mm{\hss$\rightarrow$\hspace*{-0.5mm}}$\\\relax\raggedright #1}}}

\allowdisplaybreaks

\begin{document}

\medmuskip=4mu plus 2mu minus 3mu
\thickmuskip=5mu plus 3mu minus 1mu
\belowdisplayshortskip=9pt plus 3pt minus 5pt

\title{Limit operators techniques on general metric measure spaces of bounded geometry}

\author{Raffael Hagger\footnote{R.H. has received funding from the European Union's Horizon 2020 research and innovation
programme under the Marie Sklodowska-Curie grant agreement No 844451.}\; and Christian Seifert}

\date{\today}

\maketitle

\begin{abstract}
  We study band-dominated operators on (subspaces of) $L_p$-spaces over metric measure spaces of bounded geometry satisfying an additional property. We single out core assumptions to obtain, in an abstract setting, definitions of limit operators, characterizations of compactness and Fredholmness using limit operators; and thus also spectral consequences. In this way, we recover and unify the classical and recent results on limit operator techniques, but also gain new insights and are able to treat further applications.
  
  \medskip

  MSC2010: Primary: 47A53; Secondary: 47B07, 47B35-38, 47L10

  Key words: metric measure spaces, bounded geometry, limit operators, Fredholm, essential spectrum%
\end{abstract}

\section{Introduction}

One method to study Fredholm properties of operators are so-called limit operators. This theory has its roots in \cite{Buck1958, Favard1927,LangeRabinovich1985,LangeRabinovich1985b, LangeRabinovich1986,Muhamadiev1971,Muhamadiev1972,Muhamadiev1984I, Muhamadiev1984II,RabinovichRochSilbermann1998, RabinovichRochSilbermann2001}, see also the monographs \cite{Lindner2006,RabinovichRochSilbermann2004} for a thorough treatment.
In a nutshell, it deals with the following. Given an operator $A$ on some $L_p$-space over a metric measure space $(X,d,\mu)$, limit operators can be thought of limits of shifted copies of $A$ as the shifts tend to the boundary of $X$ (or to infinity with respect to the metric $d$). One then aims to obtain information on $A$ by studying its limit operators $A_x$. One typical statement in this direction is: 

\medskip

\begin{enumerate}
  \item[(I)] $A$ is compact if and only if all limit operators of $A$ are trivial.
\end{enumerate}

\medskip

\noindent
Since an operator is Fredholm if and only if it is invertible modulo compact operators, another statement to expect is:

\medskip

\begin{enumerate}
  \item[(II)] $A$ is Fredholm if and only if all limit operators are invertible.
\end{enumerate}

\medskip

\noindent
Since the essential spectrum of an operator is related to Fredholmness, as a consequence one then obtains:

\medskip

\begin{enumerate}
  \item[(III)] The essential spectrum of $A$ coincides with the union of the spectra of its limit operators.
\end{enumerate}

\medskip

Classical results in this direction dealt with operators on scalar-valued $\ell_2$-spaces over $\Z$ or $\Z^n$ and its generalizations to $\ell_p$ for the reflexive range $1<p<\infty$ \cite{LangeRabinovich1985}. To treat the the endpoint cases $p\in\set{1,\infty}$ and vector-valued analogs one needs a further ingredient, the so-called $\mathcal{P}$-theory \cite{Lindner2006,RabinovichRochSilbermann2004,Seidel2014}. Although the method of limit operators can be considered to be classical, surprisingly, significant progress towards statement (II) was made only recently. Even more surprisingly, the result was first shown for $p \in \set{1,\infty}$ (see \cite{ChandlerWildeLindner2011, Lindner2003,Lindner2006,RabinovichRochSilbermann1998}) and then generalized to the Wiener algebra for all $p \in [1,\infty]$ by an interpolation argument \cite{RabinovichRochSilbermann2004}. The much more difficult case of band-dominated operators was then solved by Lindner and Seidel \cite{LindnerSeidel2014} in 2014. Before these recent developments, even for $\ell_2(\Z)$, there was a somewhat ``nasty'' uniform invertibility condition involved, which was difficult to deal with in applications.

In the last few years, limit operator techniques have been applied to a variety of situations such as operators on Fock spaces \cite{FulscheHagger2019}, on Bergman spaces \cite{Hagger2017, Hagger2019}, and also on uniformly discrete metric measure spaces of bounded geometry \cite{SpakulaWillett2017,Zhang2018}. Further applications to spectral theory for operator families can be found in \cite{BeckusLenzLindnerSeifert2017, BeckusLenzLindnerSeifert2018}. Although they all share the common method it appears that the corresponding techniques are tailor-made for the particular situation. This is the starting point for our paper. The aim of this paper is to single out the assumptions needed to obtain results of type (I), (II), and (III) in the abstract framework of $L_p$-spaces $L_p(X,\mu)$ for metric measure spaces $(X,d,\mu)$ and $1<p<\infty$. In this way, we unify the theory for the different applications in the literature as well as gain new insights into the method.
Specifically, we show that there is an interplay between geometric properties of the metric measure space $X$, compactness of subsets of $X$ and compact operators, abstract shifts on $X$ and on $L_p(X,\mu)$ and compactifications of $X$.

We also want to mention that for the Hilbert space case (i.e.~$p=2$) there are $C^*$-algebras techniques available to treat (I), (II) and (III) which are not used in this paper, see e.g.\ \cite{CarvalhoNistorQiao2017, CarvalhoNistorQiao2018,Georgescu2010,Georgescu2018,Roe2005}. They lead to similar connections with coarse geometry as the methods presented here.

The paper is organised as follows. In Section \ref{sec:propA} we review proper metric spaces of bounded geometry and introduce property \Aprime{}, which will be needed in the sequel. The name of our property \Aprime{} is of course inspired by Yu's property \A{} \cite{Yu2000}, which implies property \Aprime{}; in fact they are equivalent for our metric spaces. $X$ also satisfies property \Aprime{} if it has the much more convenient finite asymptotic dimension. Band-dominated operators are then introduced in Section \ref{sec:BDO}. We characterize the space of band-dominated operators and show some algebraic properties of it. Further, we introduce Toeplitz operators in our setting, which is a source for many applications. The core of this paper is Section \ref{sec:limit_operators}, where we state the assumptions for which we show that the limit operator method yields a rich theory. Here we only work with the Stone-\v{C}ech compactification $\beta X$ of $X$ and its boundary $\bdry{X}:=\beta X\setminus X$. After introducing the limit operators we focus on compact operators and characterize them as in (I); cf.\ Corollary \ref{cor:characterization_compact_operators_limit_operators}. Then we aim at proving (II) in Theorem \ref{thm:Fredholm_equivalence} and, as a consequence, (III) in Corollary \ref{cor:spectral_equality}. In order to do this, we need the notion of lower norms of an operator and its properties. Section \ref{sec:compactifications} is devoted to the influence of the compactification of $X$ we use. Since we worked with the ``largest one'' in the previous section, this is now an easy consequence of the universal property of the Stone-\v{C}ech compactification. The last Section \ref{sec:applications} collects various applications. Here, we recover many statements on limit operators in various settings by just adjusting our abstract framework. Moreover, we present further examples not yet treated in the literature so far.

\medskip

Although we only consider scalar-valued $L_p$-spaces and the reflexive range $1<p<\infty$, this is mainly for convenience and readability. We expect to be able to treat the cases $p\in\set{1,\infty}$ as well as Banach space-valued Bochner-Lebesgue spaces $L_p(X,\mu;Y)$ (where $Y$ is a Banach space) using the $\mathcal{P}$-theory. This will be investigated in a future project. However, the vector-valued case (i.e.~if $Y$ is finite-dimensional) can already be treated with the methods presented here using a simple tensoring trick, which is sketched in Section \ref{sec:applications}.

We end this introduction with some notation used in the remaining part. For Banach spaces $E$ and $F$ we write $\calL(E,F)$ for the bounded linear operators and $\calL(E):=\calL(E,E)$. The ideal of compact operators is denoted by $\calK(E,F)$ and $\calK(E)$, respectively. For a measurable subset $K$ of a measure space we write $\1_K$ for the indicator function of $K$ (which is exactly one on $K$ and zero elsewhere). Operators of multiplication by measurable functions $f$ are denoted by $M_f$. In a metric space $(X,d)$, we denote by $B(x,r)$ the open ball around $x\in X$ of radius $r>0$. Correspondingly, $B[x,r]$ denotes the closed ball. We write $\abs{\cdot}$ for the cardinality function of sets. Norms of vectors and operators will be denoted by $\|\cdot\|$ and supplemented with an appropriate subscript if necessary. The complement of a set $Y \subseteq X$ will be denoted by $Y^c$.

\section{On bounded geometry and Property \A}
\label{sec:propA}

Let $X$ be a metric space with metric $d$.
$X$ is called of \emph{bounded geometry} if there exists $\varepsilon > 0$ such that for all $r>0$ there exists $N_r\in\N$ such that for all $x\in X$ the ball $B(x,r)$ can be covered by at most $N_r$ open balls of radius $\varepsilon$. In other words, $X$ is of bounded geometry if every open ball $B(x,r)$ can be covered by a finite number of balls of radius $\varepsilon$ and this finite number $N_r$ only depends on $r$ and not on $x$.

$X$ is called \emph{proper} provided that closed balls are compact. Proper metric spaces are locally compact (hence the Riesz-Markov-Kakutani Theorem applies). Proper metric spaces are also complete, $\sigma$-compact and hence separable.

\begin{lemma}
\label{lem:covering}
  Let $(X,d)$ be a separable metric space of bounded geometry (with some $\varepsilon > 0$) and $r \geq \varepsilon$. Then there exists $J \subseteq \N$ and a sequence $(Q_j)_{j\in J}$ of Borel subsets of $X$ such that
  \begin{enumerate}
    \item
      $Q_j \cap Q_k = \varnothing$ for $j,k\in J$, $j\neq k$,
    \item
      $X = \bigcup\limits_{j\in J} Q_j$,
    \item
      $\diam Q_j\leq 4r$ for all $j\in J$,
    \item
      there is a positive integer $N$ such that for all $k \in J$ the set $J_k(r) := \set{j \in J : \dist(Q_j,Q_k) \leq r}$ has at most $N$ elements,
    \item
      for all $x\in X$ and $s>0$ the set $\set{j\in J: Q_j\cap B(x,s) \neq \varnothing}$ is finite.
  \end{enumerate}
\end{lemma}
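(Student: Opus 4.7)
The plan is to realize the $Q_j$'s as a Voronoi-style partition around a maximal $2r$-separated net in $X$, and the crucial design choice is to separate at scale $2r$ rather than $r$: bounded geometry supplies only a covering estimate at scale $\varepsilon$, and since $r\geq\varepsilon$ an open ball of radius $\varepsilon$ has diameter strictly less than $2r$, so it can contain at most one point of any $2r$-separated set. This is precisely what converts the covering bound into the packing bound we will need for (d) and (e).

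First I would invoke Zorn's lemma on the poset of $2r$-separated subsets of $X$, ordered by inclusion, to pick a maximal $S=\set{x_j}_{j\in J}$ with $d(x_j,x_k)\geq 2r$ for $j\neq k$. Because $X$ is separable, any such $S$ is countable, so we may take $J\subseteq\N$. Maximality of $S$ forces $X=\bigcup_{j\in J}B(x_j,2r)$, since a point outside all the $2r$-balls could otherwise be adjoined to $S$. Enumerating $J$ as $j_1,j_2,\ldots$ and defining
\[
Q_{j_1}:=B(x_{j_1},2r),\qquad Q_{j_n}:=B(x_{j_n},2r)\setminus\bigcup_{m<n}Q_{j_m}\quad (n\geq 2)
\]
produces Borel sets that are pairwise disjoint and cover $X$, with $Q_j\subseteq B(x_j,2r)$; hence (a), (b), and the diameter estimate (c) $\diam Q_j\leq 4r$ are immediate.

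For (d), I would first show that $\dist(Q_j,Q_k)\leq r$ forces $d(x_j,x_k)\leq 5r$ by a triangle-inequality sandwich using near-minimizers in $Q_j\subseteq B(x_j,2r)$ and $Q_k\subseteq B(x_k,2r)$. Consequently $J_k(r)\subseteq\set{j\in J : x_j\in B(x_k,6r)}$. Covering $B(x_k,6r)$ by $N_{6r}$ open balls of radius $\varepsilon$ and applying the key observation (each such ball has diameter strictly less than $2\varepsilon\leq 2r$, hence contains at most one point of the $2r$-separated set $S$) yields $\abs{J_k(r)}\leq N_{6r}$, which serves as the uniform $N$. Part (e) will follow by an entirely parallel argument: if $Q_j\cap B(x,s)\neq\varnothing$, then a point in the intersection lies in both $B(x_j,2r)$ and $B(x,s)$, so $d(x_j,x)<2r+s$, and the same covering-by-$\varepsilon$-balls argument bounds the number of admissible $j$ by $N_{2r+s}$, which is finite.

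The step I expect to be the main obstacle, and the one that dictates the whole construction, is bridging the gap from the covering form of bounded geometry to the packing estimate underlying both (d) and (e). This is precisely why one must separate at scale $2r$: had we used an $r$-separated net, two $r$-separated points could coexist inside a single $\varepsilon$-ball when $\varepsilon\leq r<2\varepsilon$, and the uniform bound in (d) would break.
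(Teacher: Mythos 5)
Your proposal is correct and follows essentially the same route as the paper: a maximal $2r$-separated net, a greedy disjointification of the $2r$-balls into Borel pieces $Q_j\subseteq B(x_j,2r)$, and the packing argument that converts the $\varepsilon$-covering bound of bounded geometry into the uniform bounds needed for (d) and (e). The only (immaterial) difference is that the paper additionally removes $\bigcup_{k\neq j}B(x_k,r)$ from $B(x_j,2r)$ so that $B(x_j,r)\subseteq Q_j$, a refinement not needed for properties (a)--(e).
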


Here, $\diam$ denotes the diameter of a set, i.e.~$\diam(Q) := \sup\limits_{x,y \in Q} d(x,y)$, and $\dist$ denotes the distance between two sets, i.e.~$\dist(Q_1,Q_2) := \inf\limits_{x \in Q_1,y \in Q_2} d(x,y)$. We will use the abbreviation $\dist(x,Q) := \dist(\set{x},Q)$ for the distance between a point $x$ and a set $Q$.

\begin{proof}
  For $r>0$ there exists $J\subseteq \N$ and $(x_j)_{j\in J}$ such that $(x_j)_{j\in J}$ is a maximal $2r$-separated sequence in $X$, i.e.\ $d(x_j,x_k) \geq 2r$ for all $j,k \in J$, $j \neq k$, and for all $x \in X$ there exists $j \in J$ such that $d(x,x_j) < 2r$. Without loss of generality let $J = \set{1,\ldots,\abs{J}}$ in case $J$ is finite and $J = \N$ in case $J$ is infinite.
  
  Define $A_j:=\bigcup\limits_{k\in J, k \neq j} B(x_k,r)$ for $j\in J$. Then $A_j\cap B(x_j,r) = \varnothing$ for all $j\in J$. Now define $Q_1 := B(x_1,2r) \setminus A_1$ and
  \[Q_j:= \Biggl(B(x_{j},2r) \setminus \Bigl(\bigcup_{k=1}^{j-1} Q_k\Bigr)\Biggr)\setminus A_j = B(x_j,2r)\setminus \Bigl(\bigcup_{k=1}^{j-1} Q_k \cup A_j\Bigr)\]
  for $j \geq 2$. Then the sequence $(Q_j)_{j \in J}$ satisfies (a), (b) and (c). Moreover, we have $B(x_j,r) \subseteq Q_j \subseteq B(x_j,2r)$ for all $j\in J$. Therefore, for (d) it suffices to show that there is a constant $N \in \N$ such that $\abs{\set{j \in J : x_j \in B(x_k,6r)}} \leq N$ for all $k \in J$.
  
  By the bounded geometry assumption there is an $\varepsilon > 0$ and an integer $N:= N_{6r}$ such that every ball $B(x_k,6r)$ can be covered by at most $N$ balls of radius $\varepsilon$, i.e.~for every $k \in J$ there exist $y_1, \ldots, y_N \in X$ such that $B(x_k,6r) \subseteq \bigcup\limits_{l = 1}^N B(y_l,\varepsilon)$. Now as $d(x_j,x_k) \geq 2r$ for all $j,k\in J$ with $j \neq k$ and $\diam(B(y_l,\varepsilon)) = 2\varepsilon \leq 2r$, every $B(y_l,\varepsilon)$ can only contain at most one $x_j$. Hence $\abs{\set{j \in J : x_j \in B(x_k,6r)}} \leq N$ for all $k \in J$.
  
  To show the last property, let $j\in J$ such that $Q_j\cap B(x,s) \neq \varnothing$. Since $Q_j\subseteq B(x_j,2r)$ we observe $d(x,x_j)<2r+s$. Hence, if also $k\in J$ such that $Q_k\cap B(x,s) \neq \varnothing$, then $d(x_j,x_k)\leq d(x_j,x) + d(x,x_k) < 4r+2s$.
  We now reason as above. Let $N:=N_{4r+2s}$ and $y_1,\ldots,y_N\in X$ such that $B(x_j,4r+2s) \subseteq \bigcup_{l=1}^N B(y_l,\varepsilon)$. Since $d(x_j,x_k)\geq 2r$ and $\diam(B(y_l,\varepsilon)) = 2\varepsilon \leq 2r$ for all $l\in\set{1,\ldots,N}$, we conclude that every $B(y_l,\varepsilon)$ can contain at most one $x_k$. Hence
  $\abs{\set{k \in J : x_k \in B(x_j,4r+2s)}} \leq N$, and therefore $\abs{\set{j\in J: Q_j\cap B(x,s) \neq \varnothing}}\leq N$.
\end{proof}

Let $Y$ be a Banach space, $\xi\from X\to Y$ a map and $R,\varepsilon>0$. We say that $\xi$ has \emph{$(R,\varepsilon)$-variation} if for all $x,y\in X$ satisfying $d(x,y)\leq R$ we have $\norm{\xi(x)-\xi(y)}\leq \varepsilon$.

\begin{definition}[{\cite[Definition 5.2.2]{Willett2009}}] \label{def:property_A}
  Let $X$ be a proper metric space of bounded geometry.
  Then $X$ has \emph{property \A{}} provided for all $R,\varepsilon>0$ there exists a weak$^*$-continuous map $\mu\from X\to C_0(X)^* = \mathcal{M}(X)$ (the regular countably additive Borel measures on $X$) 
  such that
  \begin{enumerate}
    \item $\norm{\mu_x} = 1$ for all $x\in X$,
    \item $\mu$ has $(R,\varepsilon)$-variation,
    \item there exists $S>0$ such that for all $x\in X$ the functional (=complex Radon measure) $\mu_x$ is supported in $B[x,S]$.
  \end{enumerate}
\end{definition}

\begin{definition} \label{def:property_A'}
  Let $(X,d)$ be a proper metric space of bounded geometry. We say that $X$ has \emph{property \Aprime{}} if for every $t > 0$ there is a countably infinite collection $(\varrho_{j,t})_{j\in\N}$ of non-zero measurable functions $\varrho_{j,t} \from X \to [0,1]$ ($j\in\N$) such that
    \begin{itemize}
	  \item[(i)] $\sum\limits_{j \in \N} \varrho_{j,t}(x) = 1$ for all $x \in X$,
	  \item[(ii)] $\sup\limits_{j \in \N} \diam(\spt \varrho_{j,t}) < \infty$,
	  \item[(iii)] $d(x,y) \leq \frac{1}{t}$ implies $\sum\limits_{j \in \N} \abs{\varrho_{j,t}(x) - \varrho_{j,t}(y)} < t$,
	  \item[(iv)] for all $x\in X$ and $s>0$ the set $\set{j \in \N : \spt \varrho_{j,t} \cap B(x,s) \neq \varnothing}$ is finite.
  \end{itemize}
\end{definition}

Here, $\spt$ denotes the support of a function or a measure, where the support for a measure is defined by duality with $C_0(X)$.


\begin{theorem} \label{thm:A_equiv_A'}
  Let $X$ be an unbounded proper metric space of bounded geometry. Then property \A{} is equivalent to property \Aprime.
\end{theorem}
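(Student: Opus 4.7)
The plan is to prove both implications directly by dualizing between measures and partitions of unity: the forward direction is a discretization of the measure-valued map, while the reverse requires an additional smoothing step to promote measurability to weak*-continuity.

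For \A{} $\Rightarrow$ \Aprime{}, fix $t>0$ and invoke property \A{} with parameters $R:=1/t$ and some $\varepsilon<t$ to obtain a weak*-continuous map $\mu$ into probability measures supported in $B[x,S]$. Apply Lemma~\ref{lem:covering} with some fixed $r>0$ to produce a disjoint Borel partition $(Q_j)_{j\in J}$ of $X$ of diameter at most $4r$. Define $\varrho_{j,t}(x):=\mu_x(Q_j)$. Property (i) of \Aprime{} follows from $\mu_x(X)=1$; property (iii) from the estimate $\sum_j\abs{\mu_x(Q_j)-\mu_y(Q_j)}\leq\norm{\mu_x-\mu_y}_{\mathrm{TV}}\leq\varepsilon<t$ whenever $d(x,y)\leq 1/t$; property (ii) from the inclusion $\spt\varrho_{j,t}\subseteq\set{x:\dist(x,Q_j)\leq S}$, which has diameter at most $2S+4r$; and property (iv) from Lemma~\ref{lem:covering}(e) applied at the enlarged scale $s+S$.

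For \Aprime{} $\Rightarrow$ \A{}, given $R,\varepsilon>0$ choose $t>0$ with $1/t\leq R$ and $t\leq\varepsilon$ and invoke property \Aprime{}. Pick $y_j\in\spt\varrho_{j,t}$ for each $j\in\N$ and consider the candidate
\[
\mu_x:=\sum_{j\in\N}\varrho_{j,t}(x)\,\delta_{y_j}.
\]
Conditions (a)--(c) of Definition~\ref{def:property_A} follow at once from (i)--(iv) of \Aprime{}: the total mass is $1$, the support lies in $B[x,D]$ where $D:=\sup_j\diam\spt\varrho_{j,t}<\infty$, and the $(R,\varepsilon)$-variation in total-variation norm reduces exactly to (iii). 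The delicate point is that $x\mapsto\mu_x$ is a priori only measurable, not weak*-continuous, since the $\varrho_{j,t}$ are merely measurable. To remedy this, before defining $\mu_x$ we replace $(\varrho_{j,t})$ by a continuous partition of unity subordinate to the thickened open cover $\set{x:\dist(x,\spt\varrho_{j,t})<\eta}$, available by a standard paracompactness/Urysohn construction since $X$ is metrizable and the cover is locally finite by (iv).

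The main obstacle is precisely this continuity upgrade in the converse direction: naive smoothing spreads $\ell^1$-variation across the enlarged supports, so $\eta$ must be balanced against $t$. One circumvents this by invoking \Aprime{} at a slightly finer scale $t'>t$, so that the smoothed partition of unity still satisfies the variation bound at level $t$. Once this balance is achieved, verification of the four conditions of \A{} is routine, and the forward direction is then essentially a formal computation enabled by Lemma~\ref{lem:covering}.
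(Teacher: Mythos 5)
Your forward direction is essentially the paper's argument: both apply Lemma~\ref{lem:covering} and pair the measures $\mu_x$ against a system of functions attached to the sets $Q_j$ (the paper uses a continuous partition of unity subordinate to slightly thickened $Q_{j,\delta}$, you use the indicators $\1_{Q_j}$ directly). Your variant works, with two small remarks: property \A{} only gives complex measures of norm one, so you must pass to $\abs{\mu_x}$ first, and since $\1_{Q_j}\notin C_0(X)$ the weak$^*$-continuity of $x\mapsto\mu_x$ does not immediately give continuity of $x\mapsto\mu_x(Q_j)$; you only need measurability, which holds but requires a short monotone-class argument that you should at least mention. One should also discard the $\varrho_{j,t}$ that vanish identically, as Definition~\ref{def:property_A'} asks for non-zero functions.

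The converse direction, however, has a genuine gap exactly at the point you flag. The candidate $\mu_x=\sum_j\varrho_{j,t}(x)\delta_{y_j}$ does satisfy (a)--(c) of Definition~\ref{def:property_A} (modulo a slip: you need $t\leq\min\set{1/R,\varepsilon}$, i.e.\ $1/t\geq R$, not $1/t\leq R$), but the weak$^*$-continuity is the whole difficulty, and your proposed repair does not close it. Replacing $(\varrho_{j,t})$ by \emph{some} continuous partition of unity subordinate to the thickened cover, obtained from paracompactness/Urysohn, destroys precisely condition (iii): a partition of unity produced by general topology carries no bound whatsoever on $\sum_j\abs{\psi_j(x)-\psi_j(y)}$ for $d(x,y)\leq R$ (the $\psi_j$ may oscillate arbitrarily inside each member of the cover), and invoking \Aprime{} at a finer scale $t'$ only improves the variation of the functions you are about to discard, not of the new $\psi_j$. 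Nor is there a canonical averaging procedure to smooth the merely measurable $\varrho_{j,t}$, since the hypotheses of Theorem~\ref{thm:A_equiv_A'} provide no measure or group structure on $X$ to average against. The paper avoids this obstruction entirely: it restricts the $\varrho_{j,t}$ to a maximal $2\varepsilon$-separated, coarsely dense subset $X_0\subseteq X$ (uniformly discrete, so continuity questions disappear), applies the partition-of-unity characterization of property \A{} for discrete bounded-geometry spaces \cite[Theorem 1.2.4]{Willett2009}, and then transfers property \A{} from $X_0$ to $X$ by coarse invariance \cite[Lemma 5.2.4]{Willett2009}. To salvage your direct construction you would have to produce a continuous (indeed Lipschitz-type) partition of unity with quantitative variation control from the measurable one, which is a nontrivial construction you do not supply; otherwise a discretization/coarse-invariance argument as in the paper is needed.
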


\begin{proof}
  Assume that $X$ has property \A{} and fix $t > 0$. Let $\mu$ be as in Definition \ref{def:property_A} with $(\frac{1}{t},\frac{t}{2})$-variation. Without loss of generality we may assume that $\mu_x\geq 0$ for all $x\in X$ (otherwise consider $\abs{\mu_x}$).
  
  Choose $r > 0$ and choose a sequence $(Q_j)_{j \in \N}$ according to Lemma \ref{lem:covering} (as $X$ is unbounded, we must have $J \cong \N$). For $\delta > 0$ let $Q_{j,\delta} := \set{x \in X : \dist(x,Q_j) < \delta}$. Then $\mathcal{U} := \set{Q_{j,\delta} : j \in \N}$ is a countable open covering of $X$. Let $(\psi_j)$ be a partition of unity subordinated to $\mathcal{U}$. In particular, we have $\spt \psi_j \subseteq Q_{j,\delta}$ for all $j \in \N$. We define $\varrho_{j,t} \from X \to [0,1]$ by the duality pairing $\varrho_{j,t}(z):=\dupa{\mu_z}{\psi_j}$. Then
  $\varrho_{j,t}$ is continuous, hence measurable. Moreover,
  \[\sum_{j \in \N} \varrho_{j,t}(z) = \sum_{j \in \N} \dupa{\mu_z}{\psi_{j,t}} = \dupa{\mu_z}{\sum_{j \in \N} \psi_{j,t}} = \dupa{\mu_z}{\1} = 1\]
  by monotone convergence and the partition of unity property. Therefore we have (i).
  
  Let $j \in \N$. By property (c) of Defintion \ref{def:property_A}, there exists $S > 0$ such that $\mu_z$ is supported in $B[z,S]$ for all $z \in X$. If $z \in X$ with $\dist(z,Q_{j,\delta}) > S$, then $\spt \mu_z \cap Q_{j,\delta} = \varnothing$, so $\varrho_{j,t}(z) = 0$. Hence, $\spt \varrho_{j,t} \subseteq Q_{j,\delta+S}$. We get
  \[\diam(\spt \varrho_{j,t}) \leq \diam(Q_j)+2\delta+2S \leq 4r+2\delta+2S\]
  and (ii) follows.
  
  Let $z,z'\in X$ with $d(z,z')\leq \frac{1}{t}$. Then, by monotone convergence and since $(\psi_j)_{j \in \N}$ is a partition of unity,
  \begin{align*}
  \sum_{j \in \N} \abs{\varrho_{j,t}(z) - \varrho_{j,t}(z')} \leq \sum_{j \in \N} \dupa{\abs{\mu_z - \mu_{z'}}}{\psi_j} = \dupa{\abs{\mu_z - \mu_{z'}}}{\sum_{j \in \N} \psi_j} = \dupa{\abs{\mu_z - \mu_{z'}}}{\1} = \norm{\mu_z-\mu_{z'}} \leq \frac{t}{2} < t,
  \end{align*}
  which yields (iii).
  
  Let $x \in X$ and $s > 0$. As $\set{j \in \N: Q_j \cap B(x,s+\delta+S) \neq \varnothing}$ is finite by Lemma \ref{lem:covering}(e) and $\spt \varrho_{j,t} \subseteq Q_{j,\delta+S}$, (iv) follows as well.
  
  Finally, as $X$ is assumed to be unbounded, it is clear that we need infinitely many non-zero $\varrho_{j,t}$ for a partition of unity and therefore we may just delete all $\varrho_{j,t}$ that are zero.
  
  \medskip
  
  Now assume $(X,d)$ has property \Aprime. 
  Let $\varepsilon>0$ as in the definition of bounded geometry of $X$. Let $X_0\subseteq X$ be a maximal $2\varepsilon$-separated subset of $X$ and $d_0:=d|_{X_0\times X_0}$.
  Then $(X_0,d_0)$ is countable (and infinite since $d$ is unbounded) and discrete. Moreover, $X=\bigcup_{x\in X_0} B_X(x,2\varepsilon)$ which means that $X_0$ is coarsely dense in $X$.
  By the bounded geometry of $X$, for $r>0$ there exists $N_r\in\N$ such that for all $x\in X_0$ the ball $B_{X_0}(x,r)\subseteq B_X(x,r)$ can be covered by at most $N_r$ open balls (in $X$) of radius $\varepsilon$. Since $X_0$ is $2\varepsilon$-separated, each of these $\varepsilon$-balls in $X$ can contain at most one point of $X_0$. Thus, $B_{X_0}(x,r)$ has at most $N_r$ elements. Put differently, $(X_0,d_0)$ has bounded geometry `with $\varepsilon=0$'.
  Let $R,\varepsilon>0$. Let $0<t<\min\set{\tfrac{1}{R},\varepsilon}$ and $(\varrho_{j,t})_{j\in\N}$ as in property \Aprime, $S:=\sup\limits_{j \in \N} \diam(\spt \varrho_{j,t}) < \infty$.
  Set $U_j:=\spt \varrho_{j,t}$ ($j\in\N$). Then $(U_j)_{j\in \N}$ is a cover of $X_0$, and $(\varrho_{j,t})_{j\in\N}$ is a partition of unity of $X_0$ subordinated to $(U_j)$.
  Let $x,y\in X_0$ with $d(x,y)\leq R$. Then $d(x,y)\leq \tfrac{1}{t}$ and therefore $\sum_{j\in\N} \abs{\varrho_{j,t}(x)-\varrho_{j,t}(y)}< t\leq \varepsilon$.
  Moreover, $\diam U_j = \diam \spt \varrho_{j,t}\leq S$ for all $j\in\N$.
  Now, \cite[Theorem 1.2.4]{Willett2009} yields that $X_0$ has property \A{}. Since $X_0$ is coarsely dense in $X$, \cite[Lemma 5.2.4]{Willett2009} yields that $X$ has property \A{}.  
\end{proof}

Another related notion is the asymptotic dimension. 
A metric space has finite asymptotic dimension if there exists $N \in \N$ such that for all $r < \infty$ there is a uniformly bounded open cover with $r$-multiplicity less than $N$. Here, $r$-multiplicity means that every open ball of radius $r$ intersects with at most $N$ sets from the cover (cf.~\cite[Section 3]{BellDranishnikov2008}).

\begin{proposition} \label{prop:asymp_dim}
Let $(X,d)$ be a proper metric space of bounded geometry. If $X$ has finite asymptotic dimension, then $X$ also has property \Aprime{} (and therefore also property \A).
\end{proposition}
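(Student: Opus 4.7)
The plan is to build the partition of unity $(\varrho_{j,t})_{j\in\N}$ directly from a cover realizing the finite-asymptotic-dimension hypothesis. Fix $t>0$ and let $N$ be the uniform multiplicity bound coming from $\asdim X < \infty$. I choose a parameter $r$ (to be fixed so that the estimate in step (iii) closes, e.g.\ $r>4N/t^2$) and invoke the hypothesis to obtain a uniformly bounded open cover $\mathcal{U}$ of $X$ with $\diam U \leq D$ for all $U\in\mathcal{U}$ and $r$-multiplicity at most $N$. Since $X$ is proper hence separable, I pass to a countable subcover $(U_j)_{j\in\N}$; if $X$ is unbounded this subcover is automatically infinite (uniformly bounded sets cannot cover an unbounded space with finitely many members), and the bounded case is essentially trivial and can be handled by an ad hoc splitting of $X$ into countably many measurable pieces.

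Next, I set $\phi_{j,t}(x) := \dist(x, X\setminus N_r(U_j))$, where $N_r(U_j) := \set{y\in X : \dist(y,U_j) < r}$, and $\Phi_t := \sum_j \phi_{j,t}$. Each $\phi_{j,t}$ is $1$-Lipschitz, supported in the closure of $N_r(U_j)$ (so of diameter at most $D+2r$), and strictly positive precisely on $N_r(U_j)$; by the $r$-multiplicity bound, at each $x\in X$ at most $N$ of the $\phi_{j,t}$ are non-zero, so $\Phi_t$ is pointwise a finite sum. The crucial observation is that any $x$ lies in some $U_{j_0}$, and every point outside $N_r(U_{j_0})$ is at distance at least $r$ from $x$; hence $\phi_{j_0,t}(x)\geq r$, and consequently $\Phi_t \geq r$ on all of $X$. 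I then define $\varrho_{j,t} := \phi_{j,t}/\Phi_t \from X \to [0,1]$, which is continuous (hence measurable), and properties (i) and (ii) of Definition \ref{def:property_A'} are immediate.

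For (iii), suppose $d(x,y) \leq 1/t$. Writing
\[
\varrho_{j,t}(x) - \varrho_{j,t}(y) = \frac{\phi_{j,t}(x) - \phi_{j,t}(y)}{\Phi_t(x)} + \frac{\phi_{j,t}(y)\bigl(\Phi_t(y) - \Phi_t(x)\bigr)}{\Phi_t(x)\Phi_t(y)},
\]
summing in $j$, using that at most $2N$ indices contribute a non-zero term with $\abs{\phi_{j,t}(x) - \phi_{j,t}(y)} \leq 1/t$, together with the lower bound $\Phi_t \geq r$ and $\sum_j \phi_{j,t}(y) = \Phi_t(y)$, yields
\[
\sum_{j\in\N} \abs{\varrho_{j,t}(x) - \varrho_{j,t}(y)} \leq \frac{4N}{tr} < t
\]
by the choice of $r$. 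Property (iv) follows from bounded geometry: if $\spt \varrho_{j,t}$ meets $B(x,s)$, then $U_j \subseteq B(x, s+r+D)$, and $B(x, s+r+D)$ is covered by finitely many balls of radius $r$ (by bounded geometry, after covering by $\varepsilon$-balls and enlarging to $r$), each of which meets at most $N$ members of $\mathcal{U}$. The main obstacle is the enlargement from $U_j$ to $N_r(U_j)$: without it, $\Phi_t$ would lack a uniform positive lower bound and the quotient estimate in (iii) would blow up near the boundaries of the cover elements; balancing this lower bound against the Lipschitz constants is precisely what forces $r$ to be taken large relative to $t$ and $N$.
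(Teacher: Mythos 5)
Your construction in the unbounded case is correct and is essentially the argument the paper has in mind: the paper's own ``proof'' is a one-line reference to the construction on page 6 of \cite{Hagger2017}, which is exactly this normalization of the $1$-Lipschitz functions $\dist(\cdot\,,X\setminus N_r(U_j))$ attached to a uniformly bounded cover of $r$-multiplicity at most $N$. Your bookkeeping checks out: at most $2N$ indices contribute to the variation sum, each term is at most $d(x,y)\leq 1/t$, the lower bound $\Phi_t\geq r$ holds because $x$ lies in some $U_{j_0}$ and every point outside $N_r(U_{j_0})$ is at distance at least $r$ from $x$, and this gives the bound $4N/(tr)<t$ for $r>4N/t^2$; properness supplies the Lindel\"of subcover and, together with the multiplicity bound, property (iv). Two caveats. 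First, your remark that the bounded case is ``essentially trivial'' is false: if $X$ is bounded, taking $s>\diam X$ in condition (iv) of Definition \ref{def:property_A'} forces all but finitely many $\varrho_{j,t}$ to have empty support, which is incompatible with a countably infinite family of non-zero functions, so no ad hoc splitting can work and property \Aprime{} simply fails for bounded spaces. This is really a defect of the statement (the intended setting is unbounded $X$, cf.\ Theorem \ref{thm:A_equiv_A'} and Assumption \ref{ass:space}), but you should restrict to unbounded $X$ rather than claim a fix. Second, continuity of $\varrho_{j,t}$ requires local finiteness of the family $\bigl(N_r(U_j)\bigr)_j$, not merely pointwise finiteness; this does follow from properness and the multiplicity bound by the same argument as in your step (iv), and in any case measurability --- which is all Definition \ref{def:property_A'} demands --- is immediate since $\Phi_t$ is a pointwise limit of continuous partial sums.
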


\begin{proof}
Assuming bounded geometry, the construction on page 6 of \cite{Hagger2017} does the job.
\end{proof}

For completeness we mention that a similar construction as in \cite{Hagger2017} was already used in \cite[Corollary 2.2.11]{Willett2009} to show that finite asymptotic dimension implies property \A{} in the discrete setting. 
Note that there are proper metric spaces of bounded geometry with property \A{} (and hence property \Aprime) but infinite asymptotic dimension, see e.g.~\cite[Section 23]{BellDranishnikov2008}.

\section{Band-dominated operators on metric spaces}
\label{sec:BDO}

Let $(X,d)$ be a proper metric space of bounded geometry that satisfies property \Aprime, $\mu$ a Borel measure on $X$, $p\in (1,\infty)$. For every $t > 0$ we fix a family of functions $(\varrho_{j,t})_{j\in\N}$ that satisfies the axioms in Definition \ref{def:property_A'}. We will use these functions for the rest of the paper.

\begin{lemma} \label{lem:phis}
Let $(X,d)$ be a proper metric space of bounded geometry that satisfies property \Aprime{} and let $\varphi_{j,t} := \varrho_{j,t}^{1/p}$ for $p \geq 1$, $j \in \N$ and $t > 0$. Then
\begin{itemize}
	  \item[(i)] $\sum\limits_{j \in \N} \left[\varphi_{j,t}(x)\right]^p = 1$ for all $x \in X$,
	  \item[(ii)] $\sup\limits_{j \in \N} \diam(\spt \varphi_{j,t}) < \infty$,
	  \item[(iii)] $d(x,y) \leq \frac{1}{t}$ implies $\sum\limits_{j \in \N} \abs{\varphi_{j,t}(x) - \varphi_{j,t}(y)}^p < t$,
	  \item[(iv)] for all $x\in X$ and $s>0$ the set $\set{j \in \N : \spt \varphi_{j,t} \cap B(x,s) \neq \varnothing}$ is finite.
  \end{itemize}
\end{lemma}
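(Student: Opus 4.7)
The strategy is that the map $s \mapsto s^{1/p}$ on $[0,1]$ is a homeomorphism preserving zeros, so three of the four properties are immediate from Definition~\ref{def:property_A'}. The real content is (iii), which requires the elementary inequality
\[
\abs{a^{1/p} - b^{1/p}}^p \leq \abs{a - b} \qquad (a,b \geq 0,\; p \geq 1).
\]

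To begin, I would dispose of (i), (ii) and (iv) in a few lines. For (i), since $\varphi_{j,t}^p = \varrho_{j,t}$, property (i) of Definition~\ref{def:property_A'} gives $\sum_j \varphi_{j,t}(x)^p = \sum_j \varrho_{j,t}(x) = 1$. For (ii) and (iv), the key observation is that $\spt \varphi_{j,t} = \spt \varrho_{j,t}$, because $\varphi_{j,t}(x) = 0$ if and only if $\varrho_{j,t}(x) = 0$; thus (ii) and (iv) follow verbatim from the analogous conditions in Definition~\ref{def:property_A'}.

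The main step is (iii). I would first establish the pointwise inequality above. Assuming without loss of generality that $a \geq b \geq 0$, write $u = a^{1/p}$, $v = b^{1/p}$, so $u \geq v \geq 0$. The inequality becomes $(u-v)^p \leq u^p - v^p$, equivalently $(u-v)^p + v^p \leq u^p$. This is the well-known superadditivity of $t \mapsto t^p$ on $[0,\infty)$ for $p \geq 1$, which can be proved for instance by the substitution $u = v + w$ (with $w \geq 0$) and the fact that $(v+w)^p \geq v^p + w^p$, a direct consequence of the binomial expansion or, more generally, of the convexity of $t \mapsto t^p$ together with $p \geq 1$. Once this is in hand, for any $x,y \in X$ with $d(x,y) \leq 1/t$ we obtain
\[
\sum_{j \in \N} \abs{\varphi_{j,t}(x) - \varphi_{j,t}(y)}^p \;\leq\; \sum_{j \in \N} \abs{\varrho_{j,t}(x) - \varrho_{j,t}(y)} \;<\; t,
\]
where the last inequality is property (iii) of Definition~\ref{def:property_A'}. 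This gives (iii) and completes the proof.

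I expect no real obstacle here: the only non-trivial ingredient is the scalar inequality $(u-v)^p \leq u^p - v^p$, which is a standard reverse-type inequality for convex power functions and admits a one-line proof. The rest is bookkeeping.
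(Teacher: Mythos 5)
Your proof is correct and follows exactly the paper's argument: (i), (ii), (iv) are immediate because $\varphi_{j,t}^p=\varrho_{j,t}$ and $\spt\varphi_{j,t}=\spt\varrho_{j,t}$, and (iii) rests on the same elementary inequality $\abs{a^{1/p}-b^{1/p}}\leq\abs{a-b}^{1/p}$ that the paper invokes, which you additionally justify via superadditivity of $t\mapsto t^p$. Nothing to add.
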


\begin{proof}
(i), (ii) and (iv) are clear. For (iii) use $\abs{a^{1/p} - b^{1/p}} \leq \abs{a-b}^{1/p}$ for $a,b \geq 0$.
\end{proof}

\begin{definition}
  Let $A\in \calL\bigl(L_p(X,\mu)\bigr)$. We call
  \[\prop(A):= \sup\set{\dist(K,K'):\; K,K'\subseteq X,\, M_{\1_{K'}}AM_{\1_K}\neq 0} \in [0,\infty]\]
  the \emph{propagation} or \emph{band width} of $A$.
  
  Let $\BO:=\set{A\in \calL\bigl(L_p(X,\mu)\bigr):\; \prop(A) < \infty}$ denote the set of \emph{operators of finite propagation} or \emph{band operators}. Its norm closure
  $\BDO^p:=\overline{\BO}\subseteq \calL\bigl(L_p(X,\mu)\bigr)$ is called the set of \emph{band-dominated operators}.
\end{definition}

In this section we aim for various properties and a crucial characterization of $\BDO^p$. We start with a technical lemma about band operators.

\begin{lemma}
\label{lem:BO_commutator}
  Let $(X,d)$ be a proper metric space of bounded geometry (with some $\varepsilon > 0$) that satisfies property \Aprime{} and let $\omega \geq 0$. Then there is a constant $C \geq 0$ such that for every $t < \frac{1}{9\max\{\omega,\varepsilon\}}$, every $f \in L_p(X,\mu)$ and all band operators $A$ with $\prop(A) \leq \omega$ the estimate
  \[\Biggl(\sum\limits_{j = 1}^{\infty} \norm{[A,M_{\varphi_{j,t}}]f}_p^p\Biggr)^{1/p} \leq C\norm{A}\norm{f}_pt^{1/p}\]
  holds.
\end{lemma}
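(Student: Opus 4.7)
The plan is to localise the commutator $[A, M_{\varphi_{j,t}}]$ via the Borel partition supplied by Lemma \ref{lem:covering}, use the bound $\prop(A) \le \omega$ to confine how far the action of $A$ can travel across cells, and then cash in the variation estimate of Lemma \ref{lem:phis}(iii) cell by cell.

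First I would apply Lemma \ref{lem:covering} with $r := 2\max\{\omega,\varepsilon\}$ (so $r \ge \varepsilon$ as required) to obtain a disjoint Borel partition $(Q_k)_{k\in J}$ of $X$ with $\diam Q_k \le 4r$, together with the overlap constant $N$ from part (d), and fix a base point $x_k \in Q_k$ for each $k$. Because the $Q_k$ are disjoint,
\[
\sum_{j=1}^{\infty} \norm{[A, M_{\varphi_{j,t}}] f}_p^p = \sum_{j=1}^{\infty} \sum_{k\in J} \norm{\1_{Q_k} [A, M_{\varphi_{j,t}}] f}_p^p,
\]
and on each $Q_k$ the standard commutator identity yields
\[
\1_{Q_k}[A, M_{\varphi_{j,t}}] f = \1_{Q_k} A\bigl((\varphi_{j,t} - \varphi_{j,t}(x_k))\,f\bigr) - \1_{Q_k}(\varphi_{j,t} - \varphi_{j,t}(x_k))\, Af.
\]
After applying $(a+b)^p \le 2^{p-1}(a^p+b^p)$ it suffices to control the two summands separately.

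For the second (``multiplier'') term one has $d(x,x_k) \le 4r = 8\max\{\omega,\varepsilon\} < 1/t$ whenever $x \in Q_k$, so Lemma \ref{lem:phis}(iii) gives $\sum_j \abs{\varphi_{j,t}(x) - \varphi_{j,t}(x_k)}^p \le t$ pointwise; swapping sum and integral and then summing in $k$ (the $Q_k$ partition $X$) produces a bound of $t\,\norm{Af}_p^p \le t\,\norm{A}^p\norm{f}_p^p$. For the first (``operator'') term I would invoke propagation: for any $\omega' > \omega$ the sets $Q_k$ and $\set{x : \dist(x,Q_k) > \omega'}$ are separated by at least $\omega' > \omega \ge \prop(A)$, so with $Q_k^{\omega'} := \set{x : \dist(x,Q_k) \le \omega'}$ one gets $\1_{Q_k} A g = \1_{Q_k} A (\1_{Q_k^{\omega'}} g)$ for every $g$. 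I would choose $\omega'$ with $\omega < \omega' \le r$ and $4r + \omega' < 1/t$; both constraints can be met simultaneously because $4r + \omega = 9\max\{\omega,\varepsilon\}$ and the strict inequality $t < 1/(9\max\{\omega,\varepsilon\})$ leaves positive slack. Then $x \in Q_k^{\omega'}$ still implies $d(x,x_k) < 1/t$, and a second use of Lemma \ref{lem:phis}(iii) combined with the trivial bound $\norm{\1_{Q_k} A h}_p \le \norm{A}\,\norm{h}_p$ yields the local estimate
\[
\sum_j \norm{\1_{Q_k} A\bigl((\varphi_{j,t}-\varphi_{j,t}(x_k))f\bigr)}_p^p \le \norm{A}^p\, t\,\norm{\1_{Q_k^{\omega'}} f}_p^p.
\]

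To sum the operator contribution in $k$ I would use the multiplicity bound for the enlarged cover: if $x \in Q_{k_0}$, then $x \in Q_k^{\omega'}$ forces $\dist(Q_{k_0}, Q_k) \le \omega' \le r$, so Lemma \ref{lem:covering}(d) guarantees at most $N$ such $k$'s, hence $\sum_k \norm{\1_{Q_k^{\omega'}} f}_p^p \le N\norm{f}_p^p$. Assembling everything:
\[
\sum_{j=1}^{\infty} \norm{[A, M_{\varphi_{j,t}}] f}_p^p \le 2^{p-1}(N+1)\,\norm{A}^p\,\norm{f}_p^p\, t,
\]
and the lemma follows with $C := \bigl(2^{p-1}(N+1)\bigr)^{1/p}$. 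The one step that requires real care is threading the auxiliary radius $\omega'$ strictly between $\omega$ and $r$ while still keeping $4r + \omega' < 1/t$; the constant $9$ in the hypothesis on $t$ is designed precisely to provide the positive slack needed for this.
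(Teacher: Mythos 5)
Your argument is correct: the localization $\sum_j\sum_k\|\1_{Q_k}[A,M_{\varphi_{j,t}}]f\|_p^p$, the recentering by $\varphi_{j,t}(x_k)$ (which the commutator annihilates), the propagation step $\1_{Q_k}Ag=\1_{Q_k}A(\1_{Q_k^{\omega'}}g)$ for $\omega<\omega'\le r$, and the overlap count via Lemma \ref{lem:covering}(d) all go through, and the constant $\bigl(2^{p-1}(N+1)\bigr)^{1/p}$ is independent of $t$, $f$ and $A$ as required. The paper proves the lemma from the same two ingredients (the covering of Lemma \ref{lem:covering}, taken with $r=\max\{\omega,\varepsilon\}$ rather than your $2\max\{\omega,\varepsilon\}$, and the variation estimate of Lemma \ref{lem:phis}(iii)) but organizes the estimate differently: it replaces each $\varphi_{j,t}$ by the cell-wise constant function $\tilde\varphi_{j,t}=\sum_i\varphi_{j,t}(x_i)\1_{Q_i}$, bounds the replacement error by $2\|A\|\|f\|_p t^{1/p}$, and then uses $\prop(A)\le\omega$ to expand $[A,M_{\tilde\varphi_{j,t}}]=\sum_{k\sim l}\bigl(\varphi_{j,t}(x_k)-\varphi_{j,t}(x_l)\bigr)M_{\1_{Q_l}}AM_{\1_{Q_k}}$ over interacting pairs of cells, which it controls by a duality/H\"older argument together with the multiplicity bound $N$ and $d(x_k,x_l)\le 9r<\frac{1}{t}$. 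Your route avoids both the auxiliary step function and the double-sum H\"older/duality step, instead pushing the propagation into the thickened cells $Q_k^{\omega'}$; this is arguably more elementary, at the cost of introducing the auxiliary radius $\omega'$ and a doubled covering parameter, and it yields a constant of comparable size ($2^{p-1}(N+1)$ versus essentially $N^p$ in the paper). One cosmetic point: $4r+\omega=9\max\{\omega,\varepsilon\}$ only when $\omega\ge\varepsilon$; what you actually need, and what holds in all cases, is $4r+\omega\le 9\max\{\omega,\varepsilon\}<\frac{1}{t}$, so the slack argument for choosing $\omega'$ is unaffected.
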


Here $[\cdot,\cdot]$ denotes the commutator of two operators, i.e.~$[A,B] = AB-BA$.

\begin{proof}
  Let $A$ be a band operator with $\prop(A) \leq \omega$ and $f \in L_p(X,\mu)$. Let $(Q_i)_{i \in I}$ be the sequence of Borel sets coming from Lemma \ref{lem:covering} with $r = \max\{\omega,\varepsilon\}$. For every $i \in I$ choose a point $x_i \in Q_i$. Now, define
  \[\tilde{\varphi}_{j,t} := \sum\limits_{i \in I} \varphi_{j,t}(x_i) \1_{Q_i}.\]
  As $\diam(Q_i) \leq 4r < \frac{1}{t}$, Lemma \ref{lem:phis} implies
  \[\sum\limits_{j = 1}^{\infty} \abs{\varphi_{j,t}(x) - \tilde{\varphi}_{j,t}(x)}^p = \sum\limits_{j = 1}^{\infty} \abs{\varphi_{j,t}(x) - \varphi_{j,t}(x_i)}^p < t\]
  for every $x \in Q_i$ and every $i \in I$, hence every $x \in X$. This implies
  \[\sum\limits_{j = 1}^{\infty} \norm{\left(M_{\varphi_{j,t}} - M_{\tilde{\varphi}_{j,t}}\right)f}_p^p =  \sum\limits_{j = 1}^{\infty} \int_X \abs{\varphi_{j,t}(x) - \tilde{\varphi}_{j,t}(x)}^p\abs{f(x)}^p \, \mathrm{d}\mu(x) < t\norm{f}_p^p\]
  by the monotone convergence theorem. Using the triangular inequality and Minkowski's inequality for sums, we get
  \begin{align*}
  \Biggl(\sum\limits_{j = 1}^{\infty} \norm{[A,M_{\varphi_{j,t}}]f}_p^p\Biggr)^{1/p} &\leq \Biggl(\sum\limits_{j = 1}^{\infty} \norm{[A,M_{\tilde{\varphi}_{j,t}}]f}_p^p\Biggr)^{1/p} + \Biggl(\sum\limits_{j = 1}^{\infty} \norm{[A,M_{\varphi_{j,t}} - M_{\tilde{\varphi}_{j,t}}]f}_p^p\Biggr)^{1/p}\\
  &< \Biggl(\sum\limits_{j = 1}^{\infty} \norm{[A,M_{\tilde{\varphi}_{j,t}}]f}_p^p\Biggr)^{1/p} + 2\norm{A}\norm{f}_pt^{1/p}.
  \end{align*}
  It thus remains to estimate $\sum\limits_{j = 1}^{\infty} \norm{[A,M_{\tilde{\varphi}_j}]f}_p^p$. For this we define the relation
  \[k \sim l :\Longleftrightarrow \dist(Q_k,Q_l) \leq \omega.\]
  Note that by Lemma \ref{lem:covering}, $|\set{l \in I : k \sim l}| \leq N$ and that $N$ only depends on $\omega$ (not on $t$ or $j$).
  
  As $\prop(A) \leq \omega$, we have
  \begin{align*}
  AM_{\tilde{\varphi}_{j,t}} - M_{\tilde{\varphi}_{j,t}}A &= \sum\limits_{k \in I} \varphi_{j,t}(x_k)(AM_{\1_{Q_k}} - M_{\1_{Q_k}}A) = \sum\limits_{\genfrac{}{}{0pt}{}{k,l \in I}{k \sim l}} \varphi_{j,t}(x_k)(M_{\1_{Q_l}}AM_{\1_{Q_k}} - M_{\1_{Q_k}}AM_{\1_{Q_l}})\\
  &= \sum\limits_{\genfrac{}{}{0pt}{}{k,l \in I}{k \sim l}} (\varphi_{j,t}(x_k) - \varphi_{j,t}(x_l))M_{\1_{Q_l}}AM_{\1_{Q_k}}.
  \end{align*}
  Using the usual dual pairing $\dupa{\cdot}{\cdot}$ ($\frac{1}{p} + \frac{1}{q} = 1$) and H\"older's inequality twice, we get
  \begin{align*}
  \norm{\sum\limits_{\genfrac{}{}{0pt}{}{k,l \in I}{k \sim l}} (\varphi_{j,t}(x_k) - \varphi_{j,t}(x_l))M_{\1_{Q_l}}AM_{\1_{Q_k}}f}_p &= \sup\limits_{\norm{g}_q = 1} \abs{\dupa{\sum\limits_{\genfrac{}{}{0pt}{}{k,l \in I}{k \sim l}} (\varphi_{j,t}(x_k) - \varphi_{j,t}(x_l))M_{\1_{Q_l}}AM_{\1_{Q_k}}f}{g}}\\
  &\leq \sup\limits_{\norm{g}_q = 1} \sum\limits_{\genfrac{}{}{0pt}{}{k,l \in I}{k \sim l}} \abs{\varphi_{j,t}(x_k) - \varphi_{j,t}(x_l)} \abs{\dupa{AM_{\1_{Q_k}}f}{M_{\1_{Q_l}}g}}\\
  &\leq \sup\limits_{\norm{g}_q = 1} \sum\limits_{\genfrac{}{}{0pt}{}{k,l \in I}{k \sim l}} \abs{\varphi_{j,t}(x_k) - \varphi_{j,t}(x_l)} \norm{AM_{\1_{Q_k}}f}_p\norm{M_{\1_{Q_l}}g}_q\\
  &\leq \sup\limits_{\norm{g}_q = 1} \Biggl(\sum\limits_{\genfrac{}{}{0pt}{}{k,l \in I}{k \sim l}} \abs{\varphi_{j,t}(x_k) - \varphi_{j,t}(x_l)}^p \norm{AM_{\1_{Q_k}}f}_p^p\Biggr)^{1/p}\\
  &\qquad \qquad \cdot \Biggl(\sum\limits_{\genfrac{}{}{0pt}{}{k,l \in I}{k \sim l}} \norm{M_{\1_{Q_l}}g}_q^q\Biggr)^{1/q}\\
  &\leq \Biggl(\sum\limits_{\genfrac{}{}{0pt}{}{k,l \in I}{k \sim l}} \abs{\varphi_{j,t}(x_k) - \varphi_{j,t}(x_l)}^p \norm{AM_{\1_{Q_k}}f}_p^p\Biggr)^{1/p}N^{1/q}
  \end{align*}
  since the $Q_k$ are pairwise distinct and $|\set{l \in I : k \sim l}| \leq N$, i.e.~every $Q_k$ is counted at most $N$ times. For $k \sim l$ we have $d(x_k,x_l) \leq \diam(Q_k) + \dist(Q_k,Q_l) + \diam(Q_l) \leq 9r < \frac{1}{t}$ and hence
  \begin{align*}
  \sum\limits_{j = 1}^{\infty} \norm{[A,M_{\tilde{\varphi}_j}]f}_p^p &= \sum\limits_{j = 1}^{\infty} \norm{\sum\limits_{\genfrac{}{}{0pt}{}{k,l \in I}{k \sim l}} (\varphi_{j,t}(x_k) - \varphi_{j,t}(x_l))M_{\1_{Q_l}}AM_{\1_{Q_k}}f}_p^p\\
  &\leq \sum\limits_{j = 1}^{\infty}\sum\limits_{\genfrac{}{}{0pt}{}{k,l \in I}{k \sim l}} \abs{\varphi_{j,t}(x_k) - \varphi_{j,t}(x_l)}^p \norm{AM_{\1_{Q_k}}f}_p^pN^{p/q}\\
  &= N^{p/q}\sum\limits_{\genfrac{}{}{0pt}{}{k,l \in I}{k \sim l}} \norm{AM_{\1_{Q_k}}f}_p^p \sum\limits_{j = 1}^{\infty} \abs{\varphi_{j,t}(x_k) - \varphi_{j,t}(x_l)}^p\\
  &< N^{p/q}\sum\limits_{\genfrac{}{}{0pt}{}{k,l \in I}{k \sim l}} \norm{AM_{\1_{Q_k}}f}_p^p t\\
  &< N^p\norm{A}^p\norm{f}_p^pt,
  \end{align*}
  by the same arguments as above.
\end{proof}

For the characterization of band-dominated operators we will need another auxiliary lemma.

\begin{lemma}
\label{lem:sum_estimate}
Let $f \in L_p(X,\mu)$, $(A_j)_{j \in \N}$ in $\calL(L_p(X,\mu))$, $t>0$, and $s \geq p-1$. Then
\[\Biggl\|\sum\limits_{j \in \N} M_{\varphi_{j,t}^s}A_jf\Biggr\|_p \leq \Biggl(\sum\limits_{j \in \N} \norm{M_{\varphi_{j,t}^{s-p+1}}A_jf}_p^p\Biggr)^{1/p}.\]
\end{lemma}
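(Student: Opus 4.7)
The proof is essentially pointwise: a Hölder-type splitting of $\varphi_{j,t}^s$ using the partition-of-unity identity $\sum_j \varphi_{j,t}^p = 1$ from Lemma~\ref{lem:phis}(i), followed by integration.

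The plan is to first observe that by Lemma~\ref{lem:phis}(iv), at every fixed $x \in X$ only finitely many of the $\varphi_{j,t}(x)$ are nonzero, so all sums over $j$ at a given point are genuinely finite and there is no convergence issue. Using $s \geq p - 1$ and $\varphi_{j,t}(x) \in [0,1]$, I would split $\varphi_{j,t}^s = \varphi_{j,t}^{p-1}\cdot\varphi_{j,t}^{s-p+1}$ and apply the triangle inequality to move the absolute value inside the sum.

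Then I would apply (discrete) Hölder's inequality pointwise with conjugate exponents $q = p/(p-1)$ and $p$ to the factorization
\[
\sum_{j \in \N} \varphi_{j,t}(x)^{p-1}\cdot \bigl(\varphi_{j,t}(x)^{s-p+1}\abs{(A_jf)(x)}\bigr) \leq \Bigl(\sum_{j \in \N} \varphi_{j,t}(x)^{(p-1)q}\Bigr)^{1/q} \Bigl(\sum_{j \in \N} \varphi_{j,t}(x)^{(s-p+1)p}\abs{(A_jf)(x)}^p\Bigr)^{1/p}.
\]
Since $(p-1)q = p$, the first factor on the right is $\bigl(\sum_j \varphi_{j,t}(x)^p\bigr)^{1/q} = 1$ by Lemma~\ref{lem:phis}(i). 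This produces the key pointwise bound
\[
\Bigl|\sum_{j \in \N} \varphi_{j,t}(x)^{s}(A_jf)(x)\Bigr|^p \leq \sum_{j \in \N} \varphi_{j,t}(x)^{(s-p+1)p}\abs{(A_jf)(x)}^p.
\]

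Finally, I would integrate over $X$ and exchange sum and integral by Tonelli's theorem, obtaining $\sum_j \norm{M_{\varphi_{j,t}^{s-p+1}}A_jf}_p^p$ on the right-hand side; taking $p$-th roots yields the claim. I do not foresee any genuine obstacle here: the exponent arithmetic $s = (p-1) + (s-p+1)$ is the whole trick, and the hypothesis $s \geq p-1$ is exactly what makes $\varphi_{j,t}^{s-p+1}$ a well-defined nonnegative multiplier (so the second Hölder factor and the final integral make sense without absolute-value issues).
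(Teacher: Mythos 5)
Your proof is correct, and it takes a mildly different route from the paper's. The paper proves the estimate by duality: it writes $\bigl\|\sum_j M_{\varphi_{j,t}^s}A_jf\bigr\|_p = \sup_{\norm{g}_q=1}\bigl|\sum_j \dupa{M_{\varphi_{j,t}^{s-p+1}}A_jf}{M_{\varphi_{j,t}^{p/q}}g}\bigr|$ (note $\varphi_{j,t}^{p-1}=\varphi_{j,t}^{p/q}$ is moved onto the test function $g$), then applies H\"older twice --- once in the variable $x$ inside each pairing and once in the index $j$ --- and finishes with $\sum_j \norm{M_{\varphi_{j,t}^{p/q}}g}_q^q = \norm{g}_q^q = 1$, which is the same normalization $\sum_j \varphi_{j,t}^p=1$ you use. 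Your argument replaces the duality step by a pointwise H\"older inequality in $j$ followed by Tonelli, exploiting exactly the same exponent split $s=(p-1)+(s-p+1)$ and the same partition-of-unity identity. What your version buys: it is more elementary (no appeal to the $L_q$-pairing characterization of the norm), it makes explicit via Lemma \ref{lem:phis}(iv) that the series is pointwise a finite sum, and the same pointwise bound applied to tails $\sum_{M<j\leq N}$ immediately shows the partial sums are Cauchy in $L_p$ whenever the right-hand side is finite, so the sum is unambiguously defined. What the paper's version buys: it matches the duality template used repeatedly elsewhere (e.g.\ Lemma \ref{lem:BO_commutator} and Lemma \ref{lem:series_strong_convergence}), so the computation recycles verbatim. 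Both hinge on $p>1$ through the conjugate exponent $q$, and both use the convention $\varphi_{j,t}^0=\1$ in the boundary case $s=p-1$; neither point is an issue in the paper's setting.
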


\begin{proof}
Let $\frac{1}{p} + \frac{1}{q} = 1$. Using the usual dual pairing $\dupa{\cdot}{\cdot}$ and H\"older's inequality twice, we get
  \begin{align*}
  \Biggl\|\sum\limits_{j \in \N} M_{\varphi_{j,t}^s}A_jf\Biggr\|_p &= \sup\limits_{\norm{g}_q = 1} \Biggl|\Biggl\langle \sum\limits_{j \in \N} M_{\varphi_{j,t}^s}A_jf,g\Biggr\rangle\Biggr|\\
  &= \sup\limits_{\norm{g}_q = 1} \Biggl|\sum\limits_{j \in \N} \dupa{M_{\varphi_{j,t}^{s-p+1}}A_jf}{M_{\varphi_{j,t}^{p/q}}g}\Biggr|\\
  &\leq \sup\limits_{\norm{g}_q = 1} \sum\limits_{j \in \N} \norm{M_{\varphi_{j,t}^{s-p+1}}A_jf}_p\norm{M_{\varphi_{j,t}^{p/q}}g}_q\\
  &\leq \sup\limits_{\norm{g}_q = 1} \Biggl(\sum\limits_{j \in \N} \norm{M_{\varphi_{j,t}^{s-p+1}}A_jf}_p^p\Biggr)^{1/p} \Biggl(\sum\limits_{j \in \N} \norm{M_{\varphi_{j,t}^{p/q}}g}_q^q\Biggr)^{1/q}\\
  &= \Biggl(\sum\limits_{j \in \N} \norm{M_{\varphi_{j,t}^{s-p+1}}A_jf}_p^p\Biggr)^{1/p}
  \end{align*}
  since $\sum\limits_{j \in \N} \varphi_{j,t}^p = 1$.
\end{proof}

The following Proposition is an adaptation of \cite[Theorem 2.1.6]{RabinovichRochSilbermann2004} to our setting. More precisely, we characterise an operator $A$ to be band-dominated by looking at (limits of) suitable commutator estimates of $A$ with multiplication operators.

\begin{proposition}
\label{prop:BDO_commutator}
  Let $(X,d)$ be a proper metric space of bounded geometry that satisfies property \Aprime{} and $A\in \calL\bigl(L_p(X,\mu)\bigr)$. Then the following are equivalent:
  \begin{enumerate}
    \item
      $A\in \BDO^p$.
    \item
      $\lim\limits_{t\to 0} \sup\limits_{\norm{f}_p = 1} \sum\limits_{j = 1}^{\infty} \norm{[A,M_{\varphi_{j,t}}]f}_p^p = 0$.
  \end{enumerate}
\end{proposition}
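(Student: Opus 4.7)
The plan is to prove the two implications separately, handling the easier direction (a)$\Rightarrow$(b) first and then constructing an explicit approximating band operator for (b)$\Rightarrow$(a).

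For (a)$\Rightarrow$(b), I would first record a universal bound valid for any bounded operator $B$: by Minkowski's inequality combined with the partition-of-unity identity $\sum_j \|M_{\varphi_{j,t}}f\|_p^p = \|f\|_p^p$ (which itself follows from $\sum_j \varphi_{j,t}^p = 1$), one has
\[
\Bigl(\sum_{j=1}^{\infty} \|[B,M_{\varphi_{j,t}}]f\|_p^p\Bigr)^{1/p} \leq 2\|B\|\,\|f\|_p.
\]
Given $A \in \BDO^p$ and $\varepsilon > 0$, I would pick a band operator $A_n$ with $\|A - A_n\|$ small, apply Lemma \ref{lem:BO_commutator} to $A_n$ (valid once $t$ is below a threshold depending on $\prop(A_n)$), and apply the universal bound to $A - A_n$. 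Minkowski for sums then gives
\[
\sup_{\|f\|_p = 1}\Bigl(\sum_{j=1}^{\infty} \|[A,M_{\varphi_{j,t}}]f\|_p^p\Bigr)^{1/p} \leq C\|A_n\|\,t^{1/p} + 2\|A - A_n\|,
\]
and sending $t \to 0$ followed by $n \to \infty$ yields (b).

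The harder direction (b)$\Rightarrow$(a) requires constructing a band-operator approximant to $A$. For small $t > 0$, I would set
\[
A_t := \sum_{j=1}^{\infty} M_{\varphi_{j,t}^{p-1}}\, A\, M_{\varphi_{j,t}}.
\]
Lemma \ref{lem:sum_estimate}, applied with $s = p-1$, shows that this series defines a bounded operator with $\|A_t\| \leq \|A\|$. Because each summand $M_{\varphi_{j,t}^{p-1}} A M_{\varphi_{j,t}}$ has both its input and output supported in $\spt \varphi_{j,t}$, whose diameter is uniformly bounded by $S := \sup_j \diam(\spt \varphi_{j,t}) < \infty$ (Lemma \ref{lem:phis}(ii)), the operator $A_t$ has propagation at most $S$ and is therefore a band operator.

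The key identity, obtained by expanding each commutator and using $\sum_j \varphi_{j,t}^{p-1}\varphi_{j,t} = \sum_j \varphi_{j,t}^p = 1$, is
\[
A_t - A = \sum_{j=1}^{\infty} M_{\varphi_{j,t}^{p-1}}\,[A,M_{\varphi_{j,t}}].
\]
Applying Lemma \ref{lem:sum_estimate} once more (again with $s = p-1$, so the residual weight $\varphi_{j,t}^{s-p+1}$ is trivial) gives
\[
\|(A - A_t)f\|_p \leq \Bigl(\sum_{j=1}^{\infty} \|[A,M_{\varphi_{j,t}}]f\|_p^p\Bigr)^{1/p},
\]
so that hypothesis (b) forces $\|A - A_t\| \to 0$ as $t \to 0$, placing $A$ in the norm closure of the band operators, i.e.\ $A \in \BDO^p$.

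The main conceptual obstacle is choosing the right asymmetric splitting $\varphi_{j,t}^p = \varphi_{j,t}^{p-1}\cdot\varphi_{j,t}$: this is dictated by Lemma \ref{lem:sum_estimate}, which requires the weight exponent to the left of each $A_j$ to satisfy $s \geq p-1$. Taking $s = p-1$ precisely exhausts the available weight, so that the bound on $\|A - A_t\|$ matches exactly the quantity appearing in hypothesis (b), with no extra factors of $\|f\|_p$ or $\|A\|$. A symmetric splitting such as $M_{\varphi_{j,t}^{p/2}} A M_{\varphi_{j,t}^{p/2}}$ would not fit Lemma \ref{lem:sum_estimate} cleanly for general $p \neq 2$.
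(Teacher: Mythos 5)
Your proposal is correct and follows essentially the same route as the paper: the direction (a)$\Rightarrow$(b) via approximation by a band operator, Lemma \ref{lem:BO_commutator} and the universal commutator bound, and the direction (b)$\Rightarrow$(a) via the approximant $A_t = \sum_j M_{\varphi_{j,t}^{p-1}}AM_{\varphi_{j,t}}$ (identical to the paper's $M_{\varphi_{j,t}^{p/q}}AM_{\varphi_{j,t}}$ since $p/q = p-1$) together with Lemma \ref{lem:sum_estimate} and the support bound from Lemma \ref{lem:phis}(ii). Your explicit verification that $\prop(A_t)\leq S$ and the bound $\norm{A_t}\leq\norm{A}$ are just slightly more detailed versions of steps the paper states without elaboration.
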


\begin{proof}
  ``(a)$\Rightarrow$(b)'':
  Let $\varepsilon > 0$ and choose $A_0 \in \BO$ such that $\norm{A-A_0} \leq \varepsilon$. Then, by Lemma \ref{lem:BO_commutator}, there is a $t_0 > 0$ such that $\sup\limits_{\norm{f}_p = 1} \sum\limits_{j = 1}^{\infty} \norm{[A_0,M_{\varphi_{j,t}}]f}_p^p \leq \varepsilon^p$ for all $t < t_0$. It follows
  \begin{align*}
  &\sup\limits_{\norm{f}_p = 1} \Biggl(\sum\limits_{j = 1}^{\infty} \norm{[A,M_{\varphi_{j,t}}]f}_p^p\Biggr)^{1/p}\\
  &\qquad \qquad \qquad \leq \sup\limits_{\norm{f}_p = 1} \Biggl(\sum\limits_{j = 1}^{\infty} \norm{[A_0,M_{\varphi_{j,t}}]f}_p^p\Biggr)^{1/p} + \sup\limits_{\norm{f}_p = 1} \Biggl(\sum\limits_{j = 1}^{\infty} \norm{[A-A_0,M_{\varphi_{j,t}}]f}_p^p\Biggr)^{1/p}\\
   &\qquad \qquad \qquad \leq \varepsilon + \norm{A-A_0}\sup\limits_{\norm{f}_p = 1} \Biggl(\sum\limits_{j = 1}^{\infty} \norm{M_{\varphi_{j,t}}f}_p^p\Biggr)^{1/p} + \sup\limits_{\norm{f}_p = 1} \Biggl(\sum\limits_{j = 1}^{\infty} \norm{M_{\varphi_{j,t}}(A-A_0)f}_p^p\Biggr)^{1/p}\\
   &\qquad \qquad \qquad \leq 3\varepsilon
  \end{align*}
  since $\sum\limits_{j \in \N} \varphi_{j,t}^p = 1$. Hence $\lim\limits_{t\to 0} \sup\limits_{\norm{f}_p = 1} \sum\limits_{j = 1}^{\infty} \norm{[A,M_{\varphi_{j,t}}]f}_p^p = 0$.
  
  ``(b)$\Rightarrow$(a)'':
  Let $A_t := \sum\limits_{j = 1}^{\infty} M_{\varphi_{j,t}^{p/q}}AM_{\varphi_{j,t}}$, where $\frac{1}{p} + \frac{1}{q} = 1$. Then
  \[\norm{A_t - A} = \Biggl\|\sum\limits_{j = 1}^{\infty} M_{\varphi_{j,t}^{p/q}}AM_{\varphi_{j,t}} - A\Biggr\| \leq \Biggl\|\sum\limits_{j = 1}^{\infty} M_{\varphi_{j,t}^{p/q}}M_{\varphi_{j,t}}A - A\Biggr\| + \Biggl\|\sum\limits_{j = 1}^{\infty} M_{\varphi_{j,t}^{p/q}}[A,M_{\varphi_{j,t}}]\Biggr\|.\]
  The first term vanishes since $\sum\limits_{j = 1}^{\infty} \varphi_{j,t}^{p/q}\varphi_{j,t} = \sum\limits_{j = 1}^{\infty} \varphi_{j,t}^p = 1$. Using Lemma \ref{lem:sum_estimate}, we can estimate the second term:
  \[\Biggl\|\sum\limits_{j = 1}^{\infty} M_{\varphi_{j,t}^{p/q}}[A,M_{\varphi_{j,t}}]\Biggr\| = \sup\limits_{\norm{f}_p = 1} \Biggl\|\sum\limits_{j = 1}^{\infty} M_{\varphi_{j,t}^{p/q}}[A,M_{\varphi_{j,t}}]f\Biggr\|_p \leq \sup\limits_{\norm{f}_p = 1} \Biggl(\sum\limits_{j = 1}^{\infty} \norm{[A,M_{\varphi_{j,t}}]f}_p^p\Biggr)^{1/p}.\]
  By assumption, this tends to $0$ as $t \to 0$. Thus $(A_t)_{t > 0}$ is a bounded net and converges to $A$ in norm. That $A_t$ is a band operator follows directly from the fact that $\sup\limits_{j \in \N} \diam(\spt \varphi_{j,t}) < \infty$. Hence $A \in \BDO^p$.
\end{proof}

\begin{corollary}
\label{cor:BDO_commutator}
Let $(X,d)$ be a proper metric space of bounded geometry that satisfies property \Aprime, $A\in \BDO^p$ and $g \in C([0,1])$. Then
\[\lim\limits_{t\to 0} \sup\limits_{\norm{f}_p = 1} \sum\limits_{j = 1}^{\infty} \norm{[A,M_{g \circ \varphi_{j,t}}]f}_p^p = 0.\]
\end{corollary}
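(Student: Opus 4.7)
My plan is to reduce the continuous case to polynomials via the Stone--Weierstrass theorem and then handle polynomials by combining Proposition \ref{prop:BDO_commutator} with a telescoping commutator identity. Since $[A, M_c] = 0$ for any constant $c$, I first reduce to $g(0) = 0$ and pick polynomials $p_n$ with $p_n(0) = 0$ and $\|g - p_n\|_\infty \to 0$. The technical heart will be a strengthening of Proposition \ref{prop:BDO_commutator} to sequences: for every $A \in \BDO^p$ and every sequence $(f_j)_{j \in \N}$ in $L_p(X,\mu)$ with $\sum_j \|f_j\|_p^p \leq 1$, $\lim_{t \to 0} \sum_j \|[A, M_{\varphi_{j,t}}] f_j\|_p^p = 0$. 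The proofs of Lemma \ref{lem:BO_commutator} and Proposition \ref{prop:BDO_commutator} adapt almost verbatim with $f$ replaced by $f_j$: the pointwise bound $|\varphi_{j,t}(x_k) - \varphi_{j,t}(x_l)|^p < t$ from condition (iii) of property \Aprime{} still applies term by term, and the final rearrangement becomes $\sum_j \sum_{k \sim l} \|\1_{Q_k} f_j\|_p^p \leq N \sum_j \|f_j\|_p^p$.

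For a monomial $g(x) = x^k$, the telescoping identity $[A, M_{\varphi_{j,t}}^k] = \sum_{i=0}^{k-1} M_{\varphi_{j,t}}^i [A, M_{\varphi_{j,t}}] M_{\varphi_{j,t}}^{k-1-i}$ combined with $\|M_{\varphi_{j,t}}\| \leq 1$ and Minkowski's inequality in $\ell_p$ reduces the problem to bounding $\sum_j \|[A, M_{\varphi_{j,t}}] M_{\varphi_{j,t}}^m f\|_p^p$ for $m \in \{0,\dots,k-1\}$. Setting $f_j := M_{\varphi_{j,t}}^m f$, the inequality $\sum_j \|f_j\|_p^p \leq \|f\|_p^p$ (from $\varphi_{j,t}^{mp} \leq \varphi_{j,t}^p$ for $m \geq 1$ combined with $\sum_j \varphi_{j,t}^p = 1$) makes the strengthened proposition applicable. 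Linearity then handles arbitrary polynomials with vanishing constant term.

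The extension to general continuous $g$ is the main obstacle. Writing $r_n := g - p_n$, the remainder satisfies $r_n(0) = 0$ and $\|r_n\|_\infty \to 0$, so $r_n \circ \varphi_{j,t}$ is supported in $\spt \varphi_{j,t}$ with small supremum norm. I would approximate $A$ by a band operator $A_0$ of propagation $\omega < \infty$; the finite propagation of $A_0$ together with the localisation of $r_n \circ \varphi_{j,t}$ confines $[A_0, M_{r_n \circ \varphi_{j,t}}] f$ to an $\omega$-neighbourhood of $\spt \varphi_{j,t}$. Summing over $j$ introduces a factor of $K := \sup_{x \in X} |\{j : x \in \spt \varphi_{j,t}\}|$, which must be finite for the argument to close. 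This uniform local-finiteness constant $K$ is not listed among the abstract axioms of Definition \ref{def:property_A'}, but holds for the partition of unity produced by the construction in Theorem \ref{thm:A_equiv_A'} thanks to Lemma \ref{lem:covering}(d). Combining the polynomial estimate with this error bound, and sending first $n \to \infty$ and then $t \to 0$, would complete the argument.
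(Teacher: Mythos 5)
Your polynomial step is correct and is essentially the paper's own route: the paper settles monomials by ``standard commutator relations'' on top of Proposition \ref{prop:BDO_commutator}, and your sequence-strengthened version of that proposition (families $(f_j)$ with $\sum_j\norm{f_j}_p^p\le 1$) is a clean and correct way to make this precise --- the proofs of Lemma \ref{lem:BO_commutator} and Proposition \ref{prop:BDO_commutator} do adapt as you say, by bounding each term $\abs{\varphi_{j,t}(x_k)-\varphi_{j,t}(x_l)}^p$ individually by $t$. (Minor point: for $m=0$ the inequality $\sum_j\norm{f_j}_p^p\le\norm{f}_p^p$ fails, but that term is exactly Proposition \ref{prop:BDO_commutator} itself, so nothing is lost.)

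The genuine gap is in the passage to general continuous $g$ --- and you have correctly located the delicate point, which the paper's own proof glosses over with the single estimate $\norm{M_{g\circ\varphi_{j,t}}-M_{p\circ\varphi_{j,t}}}\le\norm{g-p}_\infty$ (an operator-norm bound does not control a sum of infinitely many terms). Your error estimate hinges on the uniform overlap constant $K=\sup_{x\in X}\abs{\set{j:\; x\in\spt\varphi_{j,t}}}$; note that the band approximation $A_0$ buys nothing here, since once $K$ is available the estimate $\sum_j\norm{[A,M_{(g-p_n)\circ\varphi_{j,t}}]f}_p^p\le 2^pK\norm{A}^p\norm{g-p_n}_\infty^p\norm{f}_p^p$ works for $A$ directly, and without $K$ the propagation does not help --- the real quantity to control is $\sup_x\sum_j\abs{(g-p_n)(\varphi_{j,t}(x))}^p$. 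But (i) finiteness of $K$ is not implied by Definition \ref{def:property_A'} (as you admit), whereas the corollary is stated for the arbitrary admissible family fixed at the start of Section \ref{sec:BDO}, so invoking the particular partition built in Theorem \ref{thm:A_equiv_A'} proves a different statement; and (ii) even for that construction the overlap bound coming from Lemma \ref{lem:covering} depends on the support radius $S=S(t)$ of the property-\A{} measures, so it is only a $t$-dependent constant $K(t)$ that may blow up as $t\to 0$. Your order of limits forces you to fix the polynomial $p_n$ before letting $t\to 0$, which requires $\sup_{t\le t_1}K(t)<\infty$; without this, the error $CK(t)\norm{g-p_n}_\infty^p$ cannot be beaten by any fixed $n$, and a diagonal choice $n=n(t)$ is unavailable because your polynomial estimate carries no rate in $t$ (the $\BDO^p$-approximation error in the strengthened proposition does not improve as $t\to 0$). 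As written, the argument therefore closes only under an extra hypothesis of uniformly (in $x$ and in small $t$) bounded overlap of the supports $\spt\varphi_{j,t}$, which is neither an axiom nor established for the constructed partitions.
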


\begin{proof}
By Proposition \ref{prop:BDO_commutator} and standard commutator relations this holds for polynomials $p \from [0,1] \to \C$. As $\norm{M_{g \circ \varphi_{j,t}} - M_{p \circ \varphi_{j,t}}} \leq \norm{g - p}_{\infty}$, this generalizes to arbitrary continuous functions $g \in C([0,1])$.
\end{proof}

Next, we show some algebraic properties of the set $\BDO^p$.
These properties are well-known for band-dominated operators on $\ell_p$-spaces over $\Z^N$ (cf. \cite[Propositions 2.1.7-2.1.9]{RabinovichRochSilbermann2004}) and transfer to our situation as well.

\begin{theorem}
\label{thm:BDO_algebraic}
  Let $(X,d)$ be a proper metric space of bounded geometry that satisfies property \Aprime. We have
  \begin{enumerate}
    \item 
      $M_f\in \BDO^p$ for all $f\in L_\infty(X,\mu)$.
    \item
      $\BO$ is an algebra and $\BDO^p$ is a closed subalgebra of $\calL\bigl(L_p(X,\mu)\bigr)$.
    \item
      Let $A\in \BDO^p$ be Fredholm, $B\in \calL\bigl(L_p(X,\mu)\bigr)$ a regulariser for $A$. Then $B\in \BDO^p$. In particular, $\BDO^p$ is inverse closed in $\calL\bigl(L_p(X,\mu)\bigr)$.
    \item
      $\BDO^p$ contains $\calK\bigl(L_p(X,\mu)\bigr)$ as a closed two-sided ideal.
    \item
      Let $A\in\BDO^p$, $\frac{1}{p}+\frac{1}{q}=1$. Then $A^*\in\BDO^q$.
  \end{enumerate}
\end{theorem}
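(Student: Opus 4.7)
The plan is to handle (a), (b), (d), and (e) via direct propagation arguments, and to reduce (c) to the commutator characterization of Proposition~\ref{prop:BDO_commutator} via a carefully chosen algebraic identity.

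Parts (a), (b), (d), (e) are routine. For (a), $M_f$ has propagation $0$ since $M_{\1_{K'}}M_f M_{\1_K} = 0$ whenever $K \cap K' = \varnothing$. For (b), $\prop(AB) \leq \prop(A) + \prop(B)$, because $BM_{\1_K}$ maps into functions supported in the $\prop(B)$-neighbourhood of $K$; linearity is immediate, and $\BDO^p$ is then a closed subalgebra by continuity of multiplication. For (e), taking adjoints preserves propagation since $(M_{\1_{K'}}AM_{\1_K})^* = M_{\1_K}A^*M_{\1_{K'}}$, and since the adjoint is isometric this extends to the norm closure. For (d), approximate compact operators first by finite-rank operators (using the metric approximation property of $L_p$) and then by finite-rank operators whose factors have compact support (density of $C_c$); such operators have propagation bounded by the diameter of the union of the supports. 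That $\calK$ is a closed two-sided ideal is inherited from $\calL(L_p(X,\mu))$.

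The heart of the matter is (c). Set $K_1 := I - AB$ and $K_2 := I - BA$. The identity $BAB = B - BK_1 = B - K_2 B$ gives $BK_1 = K_2 B$, and a direct expansion using $AB = I - K_1$, $BA = I - K_2$ yields
\[
[B, M_{\varphi_{j,t}}] \;=\; -B[A, M_{\varphi_{j,t}}]B \;+\; B[M_{\varphi_{j,t}}, K_1] \;+\; [BK_1, M_{\varphi_{j,t}}] \;+\; [M_{\varphi_{j,t}}, K_2]B.
\]
Since $K_1$, $K_2$, and $BK_1$ are compact, part~(d) places all three in $\BDO^p$; together with $A \in \BDO^p$, Proposition~\ref{prop:BDO_commutator} applied to each of the four ``inner'' operators shows that every right-hand contribution satisfies $\sup_{\|f\|_p=1}\sum_j\|\cdot\,f\|_p^p \to 0$ as $t \to 0$. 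Summing and invoking Proposition~\ref{prop:BDO_commutator} in the converse direction yields $B \in \BDO^p$. Inverse closedness follows as the special case $K_1 = K_2 = 0$.

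The hard part is spotting the right identity. A naive expansion such as $[B,M_{\varphi_{j,t}}] = -B[A,M_{\varphi_{j,t}}]B + BM_{\varphi_{j,t}}K_1 - K_2 M_{\varphi_{j,t}}B$ leaves residual terms of the form $CM_{\varphi_{j,t}}D$ with $C$ or $D$ compact, whose associated $\ell^p$-sums are only bounded (by $\|Df\|_p^p$ or $\|Cf\|_p^p$) and do not vanish as $t\to 0$. The key trick is to exploit the relation $BK_1 = K_2 B$ to refactor these residuals into \emph{pure} commutators of compact operators with $M_{\varphi_{j,t}}$, which are then governed by Proposition~\ref{prop:BDO_commutator} applied to those compact operators themselves. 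This is also why part~(d) must be established before part~(c).
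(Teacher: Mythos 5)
Your proposal is correct, and for the two substantive parts, (c) and (d), it takes a genuinely different route from the paper; parts (a), (b), (e) coincide with the paper's propagation arguments. The paper proceeds in the opposite order: it proves (c) first, writing (with its sign convention $AB=I+K_1$, $BA=I+K_2$) $[B,M_{\varphi_{j,t}}]=[B,M_{\tilde\varphi_{j,t}}]=B[M_{\tilde\varphi_{j,t}},A]B-BM_{\tilde\varphi_{j,t}}K_1+K_2M_{\tilde\varphi_{j,t}}B$ for the shifted functions $\tilde\varphi_{j,t}=\varphi_{j,t}-\varphi_{j,t}(x)$, and then controls the residual terms by splitting off $M_{\1_{B(x,R)}}$ (using that $M_{\1_{B(x,R)}}\to I$ $*$-strongly, hence $\norm{M_{\1_{X\setminus B(x,R)}}K_1}\to 0$) together with the variation property of $\varrho_{j,t}$ on $B(x,R)$; part (d) is then read off from the estimates established along the way. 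You invert this: (d) is proved directly (compact operators are norm limits of finite-rank operators by the approximation property of $L_p$, and finite-rank operators with compactly supported factors are band operators), and (c) becomes purely algebraic — your identity is correct (it reduces, via $BK_1=K_2B$, to the naive expansion $[B,M]=-B[A,M]B+BMK_1-K_2MB$), each of the four terms is a fixed bounded operator composed with a commutator of a $\BDO^p$-operator ($A$, $K_1$, $BK_1$, $K_2$ respectively) with $M_{\varphi_{j,t}}$, so Proposition \ref{prop:BDO_commutator} applies to each and Minkowski's inequality in $\ell^p$ finishes, with inverse closedness as the case $K_1=K_2=0$. What each approach buys: yours avoids the $\tilde\varphi_{j,t}$-shift and the ball-truncation estimate entirely and makes transparent that compactness of the error terms enters only through (d); the paper's avoids any appeal to the approximation property (harmless here, since every $L_p(\mu)$ has it) and obtains (d) as a free by-product of (c). One small caution: in Section \ref{sec:BDO} the measure is only assumed to be Borel, so instead of invoking density of continuous compactly supported functions (a Radon-type fact only assumed from Section \ref{sec:limit_operators} on) you should truncate the finite-rank factors by indicators $\1_{B[x_0,n]}$ of closed balls, which is all your propagation bound requires.
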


For the proof we will need the following basic fact (see e.g.~\cite[Theorem 1.1.3]{RabinovichRochSilbermann2004} where it is shown for sequences, but the same proof works for bounded nets).

\begin{lemma}
\label{lem:strong_and_compact_convergence}
Let $E$ be a Banach space and $(A_{\iota})$ in $\calL(E)$ be a bounded net of operators converging strongly to $A \in \calL(E)$. Then
\[\norm{A_{\iota}K-AK} \to 0\]
for every $K \in \calK(E)$. Moreover, by taking adjoints, if $(A_{\iota}^*)$ converges strongly to $A^*$, then
\[\norm{KA_{\iota}-KA} \to 0\]
for every $K \in \calK(E)$.
\end{lemma}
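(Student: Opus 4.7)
The first assertion is a standard ``strong implies uniform on compact sets'' argument, and the second reduces to the first by duality. Set $M := \sup_{\iota}\norm{A_\iota} + \norm{A} < \infty$, which is finite by the boundedness assumption and the uniform boundedness principle (or just the hypothesis).

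\textbf{First assertion.} Fix $K \in \calK(E)$ and $\varepsilon > 0$. The plan is to exploit the total boundedness of $K(B_E)$, where $B_E$ denotes the closed unit ball: since $K$ is compact, $K(B_E)$ is relatively compact and hence admits a finite $\varepsilon$-net $x_1,\dots,x_n \in E$. For each $i \in \set{1,\dots,n}$ strong convergence yields an index $\iota_i$ with $\norm{A_\iota x_i - Ax_i} \leq \varepsilon$ for all $\iota \geq \iota_i$, and by the directedness of the index set we can pick $\iota_0$ dominating all $\iota_i$. Then, for arbitrary $x \in B_E$, choose $i$ with $\norm{Kx - x_i\| \leq \varepsilon$ and split
\[
  \norm{A_\iota Kx - AKx} \leq \norm{A_\iota(Kx - x_i)} + \norm{A_\iota x_i - Ax_i} + \norm{A(x_i - Kx)} \leq (2M+1)\varepsilon
\]
for every $\iota \geq \iota_0$. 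Taking the supremum over $x \in B_E$ gives $\norm{A_\iota K - AK} \leq (2M+1)\varepsilon$, which proves the claim.

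\textbf{Second assertion.} Assume now that $(A_\iota^*)$ converges strongly to $A^*$ in $\calL(E^*)$. Since $\norm{A_\iota^*} = \norm{A_\iota}$, the net $(A_\iota^*)$ is bounded. Moreover, for $K \in \calK(E)$ the adjoint $K^* \in \calL(E^*)$ is compact by Schauder's theorem. Applying the first assertion to the net $(A_\iota^*)$ and the compact operator $K^*$, we obtain
\[
  \norm{A_\iota^* K^* - A^* K^*} \to 0.
\]
Taking adjoints is an isometry on $\calL(E^*,E^*) \supseteq \set{(KA_\iota - KA)^* : \iota}$ (more precisely, $(KA_\iota - KA)^* = A_\iota^*K^* - A^*K^*$ and $\norm{T} = \norm{T^*}$), hence $\norm{KA_\iota - KA} \to 0$, as desired.

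\textbf{Expected obstacles.} There is no real obstacle here; the only subtle point is being careful with nets rather than sequences, which just requires invoking the directedness of the index set to merge finitely many tail conditions into one. The duality step is equally routine once one remembers that compactness passes to adjoints and that the operator-norm is preserved under taking adjoints.
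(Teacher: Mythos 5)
Your proof is correct and is essentially the argument the paper intends: the paper gives no proof of its own but cites \cite[Theorem 1.1.3]{RabinovichRochSilbermann2004}, whose proof is exactly your total-boundedness/$\varepsilon$-net argument, with the directedness of the index set used (as you do) to merge the finitely many tail conditions, and the second assertion obtained by Schauder duality just as the statement itself suggests. The only blemish is typographical: the inline norm of $Kx - x_i$ in your splitting step has mismatched delimiters, but the mathematics is sound.
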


We say that a net $(A_{\iota})$ converges \emph{$*$-strongly} if $(A_{\iota})$ and $(A_{\iota}^*)$ both converge strongly.

\begin{proof}[Proof of Theorem \ref{thm:BDO_algebraic}]
  (a)
  Let $f\in L_\infty(X,\mu)$.
  We show that $M_f\in \BO$.
  Let $K,K'\subseteq X$ with $\dist(K,K')>0$. Then $K\cap K' = \varnothing$, so $M_{\1_{K'}}M_{\1_K} = 0$. Since multiplication operators commute, we have
  $M_{\1_{K'}} M_f M_{\1_K} = M_f M_{\1_{K'}} M_{\1_K} = 0$, and therefore $M_f\in \BO$ with $\prop(M_f) = 0$.

  (b)
  Let $A,B\in \BO$. It is clear that $\BO$ is a vector space, hence it suffices to show $AB \in \BO$. Let $K,K'\subseteq X$ such that $\dist(K,K')>\prop(A)+\prop(B)$. Define $K_0:=\set{x\in X:\; \dist(x, K)\leq \prop(B)}$. Then
  $BM_{\1_K} = M_{\1_{K_0}}BM_{\1_K}$. Moreover, $\dist(K',K_0)>\prop(A)$. Hence, $M_{\1_{K'}}ABM_{\1_{K}} = M_{\1_{K'}}AM_{\1_{K_0}}BM_{\1_{K}} = 0$. It follows that $\BO$ is an algebra and $\BDO^p$ is a closed subalgebra of $\calL\bigl(L_p(X,\mu)\bigr)$.
  
  (c)
  Assume that $A \in \calL\bigl(L_p(X,\mu)\bigr)$ is Fredholm, $B \in \calL\bigl(L_p(X,\mu)\bigr)$ and $K_1,K_2\in\calK\bigl(L_p(X,\mu)\bigr)$ such that $AB = I+K_1$ and $BA = I+K_2$. Fix $x\in X$, and for $j\in\N$ and $t>0$ let $\tilde{\varphi}_{j,t}:= \varphi_{j,t} - \varphi_{j,t}(x)$. Then
  \[[B,M_{\varphi_{j,t}}] = [B,M_{\tilde{\varphi}_{j,t}}] = B[M_{\tilde{\varphi}_{j,t}},A]B - BM_{\tilde{\varphi}_{j,t}} K_1 + K_2 M_{\tilde{\varphi}_{j,t}} B.\]
  Therefore,
  \begin{align} \label{eq:BDO_algebraic_c}
  &\sup\limits_{\norm{f}_p = 1} \Biggl(\sum\limits_{j = 1}^{\infty} \norm{[B,M_{\varphi_{j,t}}]f}_p^p\Biggr)^{1/p}\notag\\
  &\qquad \leq \sup\limits_{\norm{f}_p = 1} \left(\Biggl(\sum\limits_{j = 1}^{\infty} \norm{B[M_{\tilde{\varphi}_{j,t}},A]Bf}_p^p\Biggr)^{1/p} + \Biggl(\sum\limits_{j = 1}^{\infty} \norm{BM_{\tilde{\varphi}_{j,t}}K_1f}_p^p\Biggr)^{1/p} + \Biggl(\sum\limits_{j = 1}^{\infty} \norm{K_2M_{\tilde{\varphi}_{j,t}}Bf}_p^p\Biggr)^{1/p}\right)\notag\\
  &\qquad \leq \norm{B}^{2}\sup\limits_{\norm{g}_p = 1} \Biggl(\sum\limits_{j = 1}^{\infty} \norm{[M_{\tilde{\varphi}_{j,t}},A]g}_p^p\Biggr)^{1/P} + \norm{B}\sup\limits_{\norm{f}_p = 1} \Biggl(\sum\limits_{j = 1}^{\infty} \norm{M_{\tilde{\varphi}_{j,t}}K_1f}_p^p\Biggr)^{1/p}\\
  &\qquad \qquad + \norm{B}\sup\limits_{\norm{g}_p = 1} \Biggl(\sum\limits_{j = 1}^{\infty} \norm{K_2M_{\tilde{\varphi}_{j,t}}g}_p^p\Biggr)^{1/p},\notag
  \end{align}
  where we substituted $g := \frac{Bf}{\norm{Bf}_p}$ for $Bf \neq 0$. By Proposition \ref{prop:BDO_commutator}, the first term tends to $0$ as $t \to 0$.
  
  The net $(M_{\1_{B(x,R)}})_{R>0}$ converges $*$-strongly to the identity as $R \to \infty$. Since $K_1$ and $K_2$ are compact, Lemma \ref{lem:strong_and_compact_convergence} implies $\norm{M_{\1_{X\setminus B(x,R)}}K_1} \to 0$ as well as $\norm{K_2 M_{\1_{X\setminus B(x,R)}}}\to 0$ for $R \to \infty$.
  We have
  \begin{equation} \label{eq:BDO_algebraic_c2}
  \Biggl(\sum\limits_{j = 1}^{\infty} \norm{M_{\tilde{\varphi}_{j,t}}K_1f}_p^p\Biggr)^{1/p} \leq \Biggl(\sum\limits_{j = 1}^{\infty} \norm{M_{\tilde{\varphi}_{j,t}}M_{\1_{B(x,R)}}K_1f}_p^p\Biggr)^{1/p} + \Biggl(\sum\limits_{j = 1}^{\infty} \norm{M_{\tilde{\varphi}_{j,t}}M_{\1_{X\setminus B(x,R)}}K_1f}_p^p\Biggr)^{1/p}.
  \end{equation}
  For the second term we observe
  \begin{align*}
  &\Biggl(\sum\limits_{j = 1}^{\infty} \norm{M_{\tilde{\varphi}_{j,t}}M_{\1_{X\setminus B(x,R)}}K_1f}_p^p\Biggr)^{1/p}\\
  &\qquad \qquad \qquad \qquad \qquad \leq \Biggl(\sum\limits_{j = 1}^{\infty} \norm{M_{\varphi_{j,t}}M_{\1_{X\setminus B(x,R)}}K_1f}_p^p\Biggr)^{1/p} + \Biggl(\sum\limits_{j = 1}^{\infty} \varphi_{j,t}(x)^p\norm{M_{\1_{X\setminus B(x,R)}}K_1f}_p^p\Biggr)^{1/p}\\
  &\qquad \qquad \qquad \qquad \qquad = \norm{M_{\1_{X\setminus B(x,R)}}K_1f}_p + \norm{M_{\1_{X\setminus B(x,R)}}K_1f}_p\\
  &\qquad \qquad \qquad \qquad \qquad \leq 2\norm{M_{\1_{X\setminus B(x,R)}}K_1}\norm{f}_p,
  \end{align*}
  where we used $\sum\limits_{j = 1}^{\infty} \varphi_{j,t}^p = 1$. The second term in \eqref{eq:BDO_algebraic_c2} can therefore be made as small as desired by choosing $R$ large. For the first term in \eqref{eq:BDO_algebraic_c2} we assume $t \leq \frac{1}{R}$ as this implies
  \[\sum\limits_{j = 1}^{\infty} \abs{\tilde{\varphi}_{j,t}(y)}^p = \sum\limits_{j = 1}^{\infty} \abs{\varphi_{j,t}(y) - \varphi_{j,t}(x)}^p < t\]
  for all $y \in B(x,R)$ by Lemma \ref{lem:phis}. Hence
\[\sum\limits_{j = 1}^{\infty} \norm{M_{\tilde{\varphi}_{j,t}}M_{\1_{B(x,R)}}K_1f}_p^p < t\norm{K_1f}_p^p.\]  
  We thus infer that that the second term in \eqref{eq:BDO_algebraic_c} also tends to $0$ as $t \to 0$. Similarly, we may estimate the third term in \eqref{eq:BDO_algebraic_c}. Proposition \ref{prop:BDO_commutator} now yields the assertion.
  
  (d)
  In the proof of (c) we essentially showed that 
  \[\lim_{t\to 0} \sup\limits_{\norm{f}_p = 1} \sum\limits_{j = 1}^{\infty} \norm{M_{\tilde{\varphi}_{j,t}}Kf}_p^p  = \lim_{t\to 0} \sup\limits_{\norm{f}_p = 1} \sum\limits_{j = 1}^{\infty} \norm{KM_{\tilde{\varphi}_{j,t}}f}_p^p  = 0\]
  for all $K\in\calK\bigl(L_p(X,\mu)\bigr)$. Proposition \ref{prop:BDO_commutator} thus yields the assertion.
  
  (e)
  If $(A_n)_{n \in \N}$ is a sequence in $\BO$ with $\norm{A_n-A}\to 0$, then $(A_n^*)_{n \in \N}$ is a sequence in $\BO$ and we have $\norm{A_n^*-A^*}_{\calL\bigl(L_q(X,\mu)\bigr)} = \norm{A_n-A}_{\calL\bigl(L_p(X,\mu)\bigr)}\to 0$. Therefore $A^*\in \BDO^q$.
\end{proof}

In what follows we will work with operators which are not necessarily defined on the whole $L_p(X,\mu)$, but only acting on closed subspaces. A particular example are Toeplitz operators.

\begin{definition}
  Let $M^p\subseteq L_p(X,\mu)$ be a closed subspace and assume there exists a bounded linear projection $P\in \calL\bigl(L_p(X,\mu)\bigr)$ with $\ran P = M^p$.
  Let $Q:=I-P$. 
  \begin{enumerate}
    \item
      Then $A\in \calL(M^p)$ is called \emph{band-dominated}, provided $AP \in \BDO^p$. The set of band-dominated operators on $M^p$ will be denoted by $\calA^p$.
    \item
      Let $f\in L_\infty(X,\mu)$. Then the operator $T_f:= P M_f|_{M^p} \in \calL(M^p)$ is called \emph{Toeplitz-operator} associated with $f$.
      Let $\calT_{M^p}$ be the algebra generated by all Toeplitz operators on $M^p$.
  \end{enumerate}
\end{definition}

\begin{theorem}
\label{thm:Ap_properties}
  Let $(X,d)$ be a metric space that satisfies property \Aprime and $M^p\subseteq L_p(X,\mu)$ a closed subspace with projection $P\in \BDO^p$ onto $M^p$. Then $\calA^p$ is a closed subalgebra of $\calL(M^p)$, contains $\calK(M^p)$ , $\calT_{M^p}$ and is closed w.r.t.~Fredholm inverses. In particular, $\calA^p$ is inverse closed.
\end{theorem}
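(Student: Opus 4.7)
The idea is to reduce everything to properties of $\BDO^p$ already furnished by Theorem~\ref{thm:BDO_algebraic}, exploiting that for every $A \in \calL(M^p)$ the operator $AP \in \calL(L_p(X,\mu))$ takes values in $M^p = \ran P$, so that $PAP = AP$.

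\emph{Subalgebra, compact operators, Toeplitz algebra.} For $A,B \in \calA^p$, composition in $\calL(M^p)$ extended to $L_p(X,\mu)$ satisfies $(AB)P = APBP$, because $BPf \in M^p$ forces $P(BPf) = BPf$ and both sides equal $A(B(Pf))$. Hence $(AB)P \in \BDO^p$ by Theorem~\ref{thm:BDO_algebraic}(b); linearity is immediate and norm closedness follows from $\norm{A_n P - AP} \le \norm{A_n - A}\norm{P}$ combined with the closedness of $\BDO^p$. For $K \in \calK(M^p)$, one has $KP \in \calK(L_p(X,\mu)) \subseteq \BDO^p$ by Theorem~\ref{thm:BDO_algebraic}(d), so $\calK(M^p) \subseteq \calA^p$. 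For a Toeplitz operator, $T_f P = P M_f P$ is a product of three elements of $\BDO^p$ ($M_f$ by Theorem~\ref{thm:BDO_algebraic}(a) and $P$ by assumption), so $T_f \in \calA^p$, and since $\calA^p$ is an algebra this propagates to the whole of $\calT_{M^p}$.

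\emph{Fredholm inverses.} The hard part will be closure under Fredholm inverses. Given $A \in \calA^p$ Fredholm on $M^p$ with regularizer $B \in \calL(M^p)$ satisfying $AB = I_{M^p} + K_1$ and $BA = I_{M^p} + K_2$ with $K_1, K_2 \in \calK(M^p)$, the plan is to lift the situation to all of $L_p(X,\mu)$. Setting $Q := I - P$, I will consider the operators $\widetilde A := AP + Q$ and $\widetilde B := BP + Q$. Since $I = M_{\1} \in \BDO^p$ by Theorem~\ref{thm:BDO_algebraic}(a) and $P \in \BDO^p$ by hypothesis, also $Q \in \BDO^p$, and consequently $\widetilde A \in \BDO^p$. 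Using $PQ = QP = 0$, $Q^2 = Q$ and $PBP = BP$, a direct computation yields
\[\widetilde A \widetilde B = APBP + Q = (AB)P + Q = P + K_1 P + Q = I + K_1 P,\]
and symmetrically $\widetilde B \widetilde A = I + K_2 P$. Since $K_1 P, K_2 P \in \calK(L_p(X,\mu))$, the operator $\widetilde A$ is Fredholm on $L_p(X,\mu)$ with regularizer $\widetilde B$, so Theorem~\ref{thm:BDO_algebraic}(c) forces $\widetilde B \in \BDO^p$. Then $BP = \widetilde B - Q \in \BDO^p$, i.e.\ $B \in \calA^p$. Inverse-closedness is then immediate: any invertible $A \in \calA^p$ is Fredholm with $A^{-1}$ as regularizer, so the previous step places $A^{-1}$ in $\calA^p$.
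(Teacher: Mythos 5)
Your proposal is correct and follows essentially the same route as the paper, which simply asserts that the theorem ``follows directly from Theorem \ref{thm:BDO_algebraic}'': every step you carry out (products $(AB)P=(AP)(BP)$, norm closedness, $\calK$ and Toeplitz containment, and the Fredholm-inverse step via Theorem \ref{thm:BDO_algebraic}(c)) is exactly the intended reduction. Your lifting $\widetilde A = AP+Q$, $\widetilde B = BP+Q$ is the same device the paper itself uses later (the operators $\hat A = AP+Q$ in Section \ref{sec:limit_operators}), so this is a faithful filling-in of the omitted details rather than a genuinely different argument.
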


\begin{proof}
	This follows directly from Theorem \ref{thm:BDO_algebraic}.
\end{proof}

\section{Limit operators}
\label{sec:limit_operators}

In this section we turn to limit operators. We start by making three assumptions and then deduce properties of operators by means of their limit operators.

\begin{assumption}[Space]
\label{ass:space}
  Let $(X,d)$ be a proper metric space of bounded geometry (with $\varepsilon > 0$) that satisfies property \Aprime{}. Assume that $d$ is unbounded and let $\mu$ be a Radon measure on $X$. 
  Let $\beta X$ be the Stone-\v{C}ech compactification of $X$, $\Gamma X:=\beta X\setminus X$ the boundary.
\end{assumption}

\begin{assumption}[Subspaces and Projection]
\label{ass:subspaces_and_projection}
  Let $p\in (1,\infty)$ and let $M^p\subseteq L_p(X,\mu)$ be a closed subspace
  with bounded projection $P \in \BDO^p$.
  Moreover, assume $M_{\1_K}P, PM_{\1_K}\in\calK\bigl(L_p(X,\mu)\bigr)$ for all compact subsets $K\subseteq X$.
\end{assumption}

\begin{assumption}[Shifts]
\label{ass:shifts}
  Fix $x_0\in X$.
  For $x\in X$ let $\phi_x\from X\to X$ be a bijective isometry with $\phi_x(x_0) = x$.
  Assume that $\mu\circ\phi_x \ll \mu \ll \mu\circ \phi_x$ and let $h_x$ be a measurable function such that $\abs{h_x}^p =\frac{\mathrm{d}(\mu\circ\phi_x)}{\mathrm{d}\mu}$ $\mu$-almost everywhere.
  Assume that the maps $x \mapsto \phi_x(y)$ and $x \mapsto h_x(y)$ are continuous for $\mu$-almost every $y \in X$.
  For $p\in (1,\infty)$ and $x\in X$ let $U_x^p\from L_p(X,\mu)\to L_p(X,\mu)$ be defined by $U_x^p f := (f \circ \phi_x) \cdot h_x$ and assume that $x \mapsto M_{\1_K}U_x^pP(U_x^p)^{-1}M_{\1_{K'}}$ extends continuously to $\beta X$ for all $K,K' \subseteq X$ compact.
\end{assumption}

These assumptions are used throughout the rest of this paper. Here are some additional remarks concerning these assumptions:

\begin{remark}
\begin{itemize}
	\item[(a)] As we assume that $d$ is unbounded, $(X,d)$ cannot be compact. In fact, as $X$ is proper, assuming that $(X,d)$ is not compact is equivalent to assuming that $d$ is unbounded.
	\item[(b)] The Radon property of $\mu$ guarantees that the continuous functions with compact support are dense in $L^p(X,\mu)$.
	\item[(c)] The inverses $\phi_x^{-1}$ are also isometries.
	\item[(d)] If $(X,d)$ is proper and satisfies the symmetry assumptions in Assumption  \ref{ass:shifts}, then $(X,d)$ automatically has bounded geometry with arbitrary $\varepsilon > 0$. Indeed, for every $r > 0$ the closed ball $B[x_0,r]$ is compact and hence can be convered with finitely many balls of radius $\varepsilon$. As isometries map balls to balls, the bounded geometry follows.
\end{itemize}
\end{remark}

We proceed with a few simple lemmas which follow directly from the assumptions. For the reader's convenience we also provide short proofs. In the following, we use the notation $L_{\infty,\rm c}(X,\mu)$ for the essentially bounded functions with compact support.

\begin{lemma}
\label{lem:distance_under_phi_x}
  Let $f,g\in L_{\infty,\rm c}(X,\mu)$ and $x\in X$. Then
  \[\dist\bigl(\spt (f\circ\phi_x^{-1}), \spt(f\circ \phi_x^{-1})\bigr) = \dist(\spt f, \spt g).\]
\end{lemma}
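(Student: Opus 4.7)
The claim (as written, with the evident typo $f\circ\phi_x^{-1}$ appearing twice; the right-hand side suggests the intended statement is
\[\dist\bigl(\spt(f\circ\phi_x^{-1}),\spt(g\circ\phi_x^{-1})\bigr)=\dist(\spt f,\spt g))\]
is essentially a transport-of-support computation combined with the isometry property of $\phi_x$. The plan is to reduce it to two elementary observations: (1) the essential support transforms covariantly under $\phi_x^{-1}$, i.e.\ $\spt(f\circ\phi_x^{-1})=\phi_x(\spt f)$; and (2) distances of sets are preserved by the bijective isometry $\phi_x$.

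First, I would record that by Assumption~\ref{ass:shifts}, $\phi_x$ is a bijective isometry, and hence $\phi_x^{-1}$ is as well; in particular both are homeomorphisms of the proper metric space $(X,d)$. Next, I would establish the identity $\spt(f\circ\phi_x^{-1})=\phi_x(\spt f)$ at the level of essential supports. For continuous functions this is immediate from the set-theoretic identity $\{y:f(\phi_x^{-1}(y))\neq 0\}=\phi_x(\{z:f(z)\neq 0\})$ together with continuity of $\phi_x$ (which maps closures to closures). For general $f\in L_{\infty,\mathrm{c}}(X,\mu)$ the same holds once one checks that $\phi_x$ preserves $\mu$-null sets: this is exactly the content of the mutual absolute continuity $\mu\circ\phi_x\ll\mu\ll\mu\circ\phi_x$ in Assumption~\ref{ass:shifts}, which implies that $\mu(\phi_x(N))=0$ iff $\mu(N)=0$, so essentially-zero sets transport correctly.

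Finally, I would invoke the fact that any bijective isometry $\Phi\from X\to X$ satisfies $\dist(\Phi(A),\Phi(B))=\dist(A,B)$ for arbitrary subsets $A,B\subseteq X$, since
\[\dist(\Phi(A),\Phi(B))=\inf_{a\in A,\,b\in B}d(\Phi(a),\Phi(b))=\inf_{a\in A,\,b\in B}d(a,b)=\dist(A,B).\]
Applying this to $\Phi=\phi_x$, $A=\spt f$, $B=\spt g$ and substituting via step~(1) yields the claim.

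The only mildly delicate point is that ``support'' for an $L_\infty$ function is defined only up to $\mu$-null sets, so the equality in step~(1) must really be read at the level of essential supports; the absolute continuity hypothesis on $\mu\circ\phi_x$ is precisely what ensures this is well-defined and that the push-forward identity holds. Everything else is a direct unwinding of definitions, so I do not anticipate any substantive obstacle.
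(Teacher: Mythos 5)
Your proposal is correct and follows essentially the same route as the paper's proof, which simply rewrites $\dist$ as a double infimum over the supports and changes variables via the isometry $\phi_x$ (also implicitly using $\spt(f\circ\phi_x^{-1})=\phi_x(\spt f)$ and correcting the same typo $g$ for the second $f$). Your version merely makes explicit the essential-support transport and the null-set preservation coming from $\mu\circ\phi_x\ll\mu\ll\mu\circ\phi_x$, details the paper leaves tacit.
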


\begin{proof}
  We have
  \begin{align*}
    \dist\bigl(\spt (f\circ\phi_x^{-1}), \spt(f\circ \phi_x^{-1})\bigr) & = \inf_{y\in \spt (f\circ\phi_x^{-1})} \inf_{z\in \spt(g\circ \phi_x^{-1})} d(y,z) \\
    & = \inf_{y\in \spt (f\circ\phi_x^{-1})} \inf_{z\in \spt(g\circ \phi_x^{-1})} d(\phi_x(y),\phi_x(z)) \\
    & = \inf_{y'\in \spt f} \inf_{z'\in \spt g} d(y',z')\\
    & = \dist(\spt f, \spt g). \qedhere
  \end{align*}
\end{proof}

\begin{lemma}
\label{lem:surjective_isometry}
  Let $p\in (1,\infty)$, $x\in X$. Then $U_x^p$ is a surjective isometry. 
\end{lemma}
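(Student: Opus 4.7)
The plan is to prove this in two steps: first verify that $U_x^p$ preserves the $L_p$-norm by an elementary change-of-variables calculation, and then construct an explicit two-sided inverse using the reciprocal Radon--Nikodym derivative.

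For the isometry, the defining property $\abs{h_x}^p = \frac{\mathrm{d}(\mu \circ \phi_x)}{\mathrm{d}\mu}$ gives, for any $f \in L_p(X,\mu)$,
\[
\norm{U_x^p f}_p^p = \int_X \abs{f \circ \phi_x}^p \abs{h_x}^p \, \mathrm{d}\mu = \int_X \abs{f \circ \phi_x}^p \, \mathrm{d}(\mu \circ \phi_x).
\]
Since $(\mu \circ \phi_x)(A) = \mu(\phi_x(A))$ is the pushforward of $\mu$ under $\phi_x^{-1}$, the standard transformation formula $\int g \, \mathrm{d}(\mu \circ \phi_x) = \int g \circ \phi_x^{-1}\, \mathrm{d}\mu$, applied with $g = \abs{f \circ \phi_x}^p$, shows the last integral equals $\norm{f}_p^p$.

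For surjectivity, the key observation is that the two-sided absolute continuity $\mu \circ \phi_x \ll \mu \ll \mu \circ \phi_x$ forces $\abs{h_x}^p > 0$ $\mu$-almost everywhere, so $h_x$ is $\mu$-a.e.\ non-zero and $1/h_x$ is well-defined and measurable. I will then define
\[
V_x^p g := (g \circ \phi_x^{-1}) \cdot \bigl((1/h_x) \circ \phi_x^{-1}\bigr).
\]
By the Radon--Nikodym chain rule, $\frac{\mathrm{d}\mu}{\mathrm{d}(\mu \circ \phi_x)} = 1/\abs{h_x}^p$ $\mu$-a.e., and running the computation above in reverse (now transforming with $\phi_x$ instead of $\phi_x^{-1}$) shows $\norm{V_x^p g}_p = \norm{g}_p$. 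A direct pointwise check — $(U_x^p V_x^p g)(y) = V_x^p g(\phi_x(y)) h_x(y) = g(y) \cdot \tfrac{1}{h_x(y)} \cdot h_x(y) = g(y)$, and analogously for $V_x^p U_x^p$ — confirms $V_x^p$ is a two-sided inverse, so $U_x^p$ is surjective.

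The only point that needs mild care is deducing $h_x \neq 0$ a.e.\ from the symmetric absolute continuity hypothesis; after that, everything reduces to the pushforward change-of-variables formula, so no real obstacle remains.
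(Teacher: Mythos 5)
Your proposal is correct and follows essentially the same route as the paper: the isometry via the change-of-variables identity $\abs{h_x}^p\,\mathrm{d}\mu = \mathrm{d}(\mu\circ\phi_x)$, and surjectivity via the explicit inverse $g \mapsto \bigl(\tfrac{g}{h_x}\bigr)\circ\phi_x^{-1}$, which is exactly your $V_x^p$. Your extra remark that $\mu \ll \mu\circ\phi_x$ forces $h_x \neq 0$ $\mu$-a.e.\ is a small point the paper leaves implicit, but otherwise the arguments coincide.
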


\begin{proof}
  Let $f\in L_p(X,\mu)$. Then
  \[\int_X \abs{U_x^p f(y)}^p \, \mathrm{d}\mu(y) = \int_{X} \abs{f(\phi_x(y))}^p \abs{h_x(y)}^p \, \mathrm{d}\mu(y) = \int_{\phi_x^{-1}(X)} \abs{f(\phi_x(y))}^p \,\mathrm{d}(\mu\circ \phi_x)(y) = \int_X \abs{f(y)}^p \, \mathrm{d}\mu(y).\]
  Hence, $U_x^p$ is an isometry.
  
  Let $g\in L_p(X,\mu)$. Define $f:=\Bigl(\frac{g}{h_x}\Bigr)\circ \phi_x^{-1}$. Then $f\in L_p(X,\mu)$ and $U_x^pf = g$, so $U_x^p$ is surjective.
\end{proof}

Note that $(U_x^p)^{-1}f = \Bigl(\frac{f}{h_x}\Bigr)\circ \phi_x^{-1}$ for all $f\in L_p(X,\mu)$, $x\in X$, and $(U_x^p)^{-1}$ is again a surjective isometry for all $x\in X$.

\begin{lemma}
\label{lem:multiplication_and_shifts}
  Let $f\in L_\infty(X,\mu)$, $x\in X$. Then $M_f U_{x}^p = U_x^p M_{f\circ \phi_x^{-1}}$.
\end{lemma}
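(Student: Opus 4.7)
The plan is to verify the identity by direct pointwise evaluation on an arbitrary $g \in L_p(X,\mu)$, using nothing more than the definitions supplied in Assumption \ref{ass:shifts} together with the fact that $\phi_x$ is a bijection. Since both $M_f U_x^p$ and $U_x^p M_{f\circ\phi_x^{-1}}$ are bounded operators on $L_p(X,\mu)$ (boundedness of $M_{f\circ\phi_x^{-1}}$ follows from $\|f\circ\phi_x^{-1}\|_\infty = \|f\|_\infty$, since $\phi_x$ is a bijective isometry and $\mu$ is invariant up to the Radon--Nikodym density $|h_x|^p$), it suffices to check the equality $\mu$-almost everywhere on a dense subset, say $L_{\infty,{\rm c}}(X,\mu)$, but in fact the pointwise $\mu$-a.e.\ equality holds directly for every $g \in L_p(X,\mu)$.

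First I would compute the left-hand side: for $\mu$-a.e.\ $y \in X$,
\[
(M_f U_x^p g)(y) = f(y) \cdot (U_x^p g)(y) = f(y)\, g(\phi_x(y))\, h_x(y).
\]
Next I would compute the right-hand side:
\[
(U_x^p M_{f\circ\phi_x^{-1}} g)(y) = \bigl(M_{f\circ\phi_x^{-1}} g\bigr)(\phi_x(y))\, h_x(y) = f(\phi_x^{-1}(\phi_x(y)))\, g(\phi_x(y))\, h_x(y).
\]
Since $\phi_x\from X \to X$ is a bijection with inverse $\phi_x^{-1}$, we have $\phi_x^{-1}(\phi_x(y)) = y$ for every $y \in X$, so $f(\phi_x^{-1}(\phi_x(y))) = f(y)$, and the two expressions coincide.

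There is essentially no obstacle here: the statement is a bookkeeping identity which encodes the standard covariance relation between multiplication operators and weighted composition operators. The only point worth being careful about is the $\mu$-a.e.\ nature of the equalities, which is harmless because both $h_x$ and the composition $g\circ \phi_x$ are defined only $\mu$-a.e.\ anyway, and the manipulation above respects this. Hence the lemma follows.
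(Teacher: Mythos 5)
Your proposal is correct and follows exactly the same route as the paper: evaluate both sides pointwise ($\mu$-a.e.) on an arbitrary $g \in L_p(X,\mu)$ and use $\phi_x^{-1}(\phi_x(y)) = y$ to identify the two expressions. The additional remarks about boundedness and density are harmless but not needed.
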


\begin{proof}
  Let $g\in L_p(X,\mu)$. Then for $\mu$-almost every $y\in X$ we have
  \begin{align*}
    M_f U_{x}^p g (y) & = f(y) g(\phi_x(y)) h_x(y) = (f\circ\phi_x^{-1})(\phi_x(y)) g(\phi_x(y)) h_x(y) = U_x^p M_{f\circ \phi_x^{-1}}g(y).\qedhere
  \end{align*}
\end{proof}

\begin{proposition}
\label{prop:weak_limit_operators}
Let $B \in \calL(L_p(X,\mu))$. Then the map $x \mapsto U_x^pB(U_x^p)^{-1}$ has a weakly continuous extension to $\beta X$.
\end{proposition}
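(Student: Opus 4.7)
The plan is to show that $x \mapsto U_x^p B (U_x^p)^{-1}$ is a continuous map from $X$ into a compact Hausdorff space (with the weak operator topology, WOT, on the target), and then to invoke the universal property of the Stone-\v{C}ech compactification. Since $U_x^p$ is a surjective isometry (Lemma \ref{lem:surjective_isometry}), $\norm{U_x^p B (U_x^p)^{-1}} \leq \norm{B}$, so the image of $X$ is contained in the closed ball $\mathcal{B} := \set{T \in \calL(L_p(X,\mu)) : \norm{T} \leq \norm{B}}$. Because $p \in (1,\infty)$ makes $L_p(X,\mu)$ reflexive, $\mathcal{B}$ equipped with WOT is compact (apply Banach-Alaoglu factor-wise to the embedding $\calL(L_p(X,\mu)) \hookrightarrow \prod_{f \in L_p(X,\mu)} L_p(X,\mu)$, $T \mapsto (Tf)_f$, combined with Tychonoff) and Hausdorff.

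I would actually prove the stronger statement that both $x \mapsto U_x^p$ and $x \mapsto (U_x^p)^{-1}$ are strongly operator continuous on $X$. For $U_x^p$: given $h \in C_c(X) \subseteq L_p(X,\mu)$ and $x_n \to x_0$ in $X$, Assumption \ref{ass:shifts} gives $U_{x_n}^p h \to U_{x_0}^p h$ pointwise $\mu$-a.e.; combined with $\norm{U_x^p h}_p = \norm{h}_p$ being constant (Lemma \ref{lem:surjective_isometry}), the Brezis-Lieb lemma upgrades this to $L_p$-norm convergence. Density of $C_c(X)$ in $L_p(X,\mu)$ (from the Radon property of $\mu$) together with uniform boundedness extends this to SOT continuity on all of $L_p(X,\mu)$.

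For $(U_x^p)^{-1}$, I would first observe via a direct change of variables that its Banach-space adjoint has the form
\[((U_x^p)^{-1})^* \psi (y) = \psi(\phi_x(y))\, h_x(y)^{p-1},\]
which has the same structure and the same pointwise $\mu$-a.e.\ continuity in $x$ as $U_x^p$; moreover, $((U_x^p)^{-1})^*$ is a surjective isometry of $L_q(X,\mu)$ (with $1/p+1/q=1$). The Brezis-Lieb argument thus gives SOT continuity of $x \mapsto ((U_x^p)^{-1})^*$ on $L_q(X,\mu)$, which via the duality pairing $\dupa{(U_x^p)^{-1} f}{\psi} = \dupa{f}{((U_x^p)^{-1})^* \psi}$ translates to weak continuity of $x \mapsto (U_x^p)^{-1} f$ in $L_p(X,\mu)$. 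Since $L_p(X,\mu)$ is uniformly convex for $p \in (1,\infty)$ and $\norm{(U_x^p)^{-1} f}_p = \norm{f}_p$ is constant, the Radon-Riesz property upgrades this weak convergence to norm convergence, so $(U_x^p)^{-1}$ is SOT continuous as well. Composition then makes $x \mapsto U_x^p B (U_x^p)^{-1}$ SOT continuous (hence WOT continuous) from $X$ into the compact Hausdorff space $\mathcal{B}$, and the universal property of $\beta X$ yields the unique WOT-continuous extension to $\beta X$. The main technical hurdle is the SOT upgrade for $(U_x^p)^{-1}$, where the uniform convexity of $L_p$ (hence $p \in (1,\infty)$) plays an essential role via the Radon-Riesz property; this is exactly the point where the endpoint cases $p \in \set{1, \infty}$ mentioned in the introduction would require the $\mathcal{P}$-theory instead.
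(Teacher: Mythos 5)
Your overall strategy --- strong (hence weak) operator continuity on $X$, WOT compactness of the norm ball via reflexivity of $L_p$, and the universal property of the Stone-\v{C}ech compactification --- is exactly the paper's. The one place you genuinely diverge is the strong continuity of $x \mapsto (U_x^p)^{-1}$: the paper obtains it in one line from the identity $(U_{x_\iota}^p)^{-1}-(U_x^p)^{-1}=(U_{x_\iota}^p)^{-1}\bigl(U_x^p-U_{x_\iota}^p\bigr)(U_x^p)^{-1}$ together with the fact that $(U_{x_\iota}^p)^{-1}$ is an isometry, which reduces the claim to the already-established strong continuity of $x \mapsto U_x^p$; you instead compute the adjoint, prove its strong continuity on $L_q$ by the same Brezis--Lieb/Scheff\'e device, deduce weak continuity of $x\mapsto(U_x^p)^{-1}f$ by duality, and upgrade to norm convergence via Radon--Riesz. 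Your route is correct but heavier, and your closing claim that uniform convexity is \emph{essential} at this step is inaccurate --- the paper's identity avoids it entirely; $p\in(1,\infty)$ enters only through reflexivity, for the WOT compactness of bounded sets (the choice of Brezis--Lieb versus the paper's Scheff\'e argument for $x\mapsto U_x^p$ is immaterial). One small correction: your adjoint formula $\psi(\phi_x(y))\,h_x(y)^{p-1}$ is literally correct only when $h_x\ge 0$; in general (e.g.\ the Fock space example, where $h_x$ is complex-valued) it should read $\psi(\phi_x(y))\,\abs{h_x(y)}^p/h_x(y)$, which is well defined $\mu$-a.e.\ since $\mu\ll\mu\circ\phi_x$ forces $h_x\neq 0$ a.e., and this does not affect the rest of your argument.
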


\begin{proof}
  We first show that the map $X\ni x \mapsto U_x^pB(U_x^p)^{-1}$ is strongly continuous for any $B \in \calL(L_p(X,\mu))$. Let $f \in L_p(X,\mu)$ be continuous. 
  As $x \mapsto \phi_x(y)$ and $x \mapsto h_x(y)$ are continuous for $\mu$-almost every $y \in X$ by Assumption \ref{ass:shifts}, 
  $x \mapsto (U_x^pf)(y) = f(\phi_x(y))h_x(y)$ is also continuous for $\mu$-almost every $y\in X$.
  Moreover, $\norm{U_x^pf}_p = \norm{f}_p$ for all $x \in X$, hence $x \mapsto U_x^pf$ is continuous by Scheff\'e's Lemma. 
  As this is true for every continuous function $f$ and the continuous $L_p$-functions are dense in $L_p(X,\mu)$,
  we obtain that $x \mapsto U_x^p$ is strongly continuous. This also implies that $x \mapsto (U_x^p)^{-1}$ is strongly continuous. Indeed, if $(x_\iota)_\iota$ is a net in $X$ with $x_\iota \to x \in X$ and $f \in L_p(X,\mu)$, then
  \[\norm{\bigl((U_{x_\iota}^p)^{-1}-(U_x^p)^{-1}\bigr)f}_p = \norm{(U_{x_\iota}^p)^{-1}\bigl(U_x^p - U_{x_\iota}^p\bigr)(U_x^p)^{-1}f}_p = \norm{\bigl(U_x^p - U_{x_\iota}^p\bigr)(U_x^p)^{-1}f}_p \to 0\]
  as $x_\iota \to x$. As a consequence, $x \mapsto U_x^pB(U_x^p)^{-1}$ is also strongly continuous. As bounded sets are relatively compact in the weak operator topology, the map $x \mapsto U_x^pB(U_x^p)^{-1}$ has a weakly continuous extension to $\beta X$.
\end{proof}

Proposition \ref{prop:weak_limit_operators} shows that for $B \in \calL(L_p(X,\mu))$, $x \in \beta X$ and every net $(x_\iota)_\iota$ in $\beta X$ with $x_\iota \to x$ the weak limit
  \begin{equation} \label{eq:weak_limit}
  B_x := \wlim\limits_{x_\iota \to x} U_{x_\iota}^pB(U_{x_\iota}^p)^{-1}
  \end{equation}
  exists and is independent of the net $(x_{\iota})$. In particular, $B_x = U_x^pB(U_x^p)^{-1}$ for $x \in X$.

\begin{corollary}
\label{cor:limit_operator_norm}
Let $B \in \calL(L_p(X,\mu))$. Then $\norm{B_x} \leq \norm{B}$ for all $x \in \beta X$.
\end{corollary}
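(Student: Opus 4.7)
The plan is to handle the two cases $x \in X$ and $x \in \Gamma X$ separately, with the interior case giving equality and the boundary case following from weak lower semicontinuity of the operator norm.

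First I would observe that for $x \in X$ we have $B_x = U_x^p B (U_x^p)^{-1}$ by the definition immediately following Proposition \ref{prop:weak_limit_operators}. Since Lemma \ref{lem:surjective_isometry} tells us that $U_x^p$ is a surjective isometry (and so is its inverse), conjugation by $U_x^p$ is an isometric automorphism of $\calL(L_p(X,\mu))$. Hence $\norm{B_x} = \norm{B}$ for all $x \in X$, which is actually stronger than what is claimed.

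For $x \in \Gamma X$ the strategy is to pick, using the density of $X$ in its Stone-\v{C}ech compactification, a net $(x_\iota)_\iota$ in $X$ with $x_\iota \to x$. By Proposition \ref{prop:weak_limit_operators} and the definition in \eqref{eq:weak_limit}, the net $U_{x_\iota}^p B (U_{x_\iota}^p)^{-1}$ converges weakly (in the weak operator topology) to $B_x$. For any $f \in L_p(X,\mu)$ and $g \in L_q(X,\mu)$ with $\norm{f}_p = \norm{g}_q = 1$, weak operator convergence gives
\[
\abs{\dupa{B_x f}{g}} = \lim_\iota \abs{\dupa{U_{x_\iota}^p B (U_{x_\iota}^p)^{-1} f}{g}} \leq \sup_\iota \norm{U_{x_\iota}^p B (U_{x_\iota}^p)^{-1}} = \norm{B},
\]
where the last equality uses the interior case. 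Taking the supremum over such $f$ and $g$ yields $\norm{B_x} \leq \norm{B}$.

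There is essentially no obstacle here: the key ingredients are already packaged, namely the isometry property of $U_x^p$ (Lemma \ref{lem:surjective_isometry}) and the weak continuity of the extension (Proposition \ref{prop:weak_limit_operators}). The only subtlety worth flagging is that one cannot expect equality at boundary points, since weak limits can strictly decrease the norm; this is why the statement is phrased as an inequality.
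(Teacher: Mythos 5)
Your proposal is correct and follows essentially the same route as the paper: $B_x$ is a weak operator limit of the conjugates $U_{x_\iota}^p B (U_{x_\iota}^p)^{-1}$, each of norm $\norm{B}$ by Lemma \ref{lem:surjective_isometry}, and the bound passes to the limit. The paper compresses this into a one-line appeal to the isometry property together with the uniform boundedness principle, whereas you make the lower semicontinuity explicit via the dual pairing, which is the same argument spelled out (and your remark that equality holds for $x \in X$ but can fail on $\Gamma X$ is accurate).
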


\begin{proof}
This follows directly from the uniform boundedness principle and the fact that $U_{x_\iota}^p$ and $(U_{x_\iota}^p)^{-1}$ are isometries.
\end{proof}

\begin{definition}[limit operators]
\label{def:limit_operator}
  Let $A\in\calL(M^p)$, $x \in \bdry{X}$ and $(x_{\iota})$ a net in $X$ that converges to $x$. Then the operator
  \[A_x := \wlim\limits_{x_\iota\to x} U_{x_\iota}^p AP(U_{x_\iota}^p)^{-1}|_{\ran(P_x)}\]
  is well-defined and called a \emph{limit operator} of $A$.
\end{definition}

Note that $A_x$ is by definition an operator on the closed subspace $\ran(P_x) \subseteq L_p(X,\mu)$, which may depend on $x$ and may differ from $M^p$. For band-dominated operators the convergence is much stronger:

\begin{theorem} \label{thm:limit_operator_existence}
Let $A\in\calA^p$, $K \subseteq X$ compact and $x \in \beta X$. For every net $(x_{\iota})$ in $\beta X$ converging to $x$ we have
\begin{itemize}
	\item[(i)] $\lim\limits_{x_\iota\to x} M_{\1_K}(AP)_{x_{\iota}} = M_{\1_K}A_xP_x$,
	\item[(ii)] $\lim\limits_{x_\iota\to x} (AP)_{x_\iota}M_{\1_K} = A_xP_xM_{\1_K}$,
	\item[(iii)] $\slim\limits_{x_\iota\to x} (AP)_{x_{\iota}} = A_xP_x$,
	\item[(iv)] $\slim\limits_{x_\iota\to x} \bigl((AP)_{x_\iota}\bigr)^* = P_x^*A_x^*$.
\end{itemize}
\end{theorem}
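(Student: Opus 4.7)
The plan is to prove (i)--(iv) jointly, using three ingredients: (A) norm approximation of $AP \in \BDO^p$ by band operators $B_0 \in \BO$ of propagation $\leq \omega$, together with the observation that conjugation by the surjective isometry $U_{x_\iota}$ preserves propagation (since $\phi_{x_\iota}$ is an isometry; cf.\ Lemma~\ref{lem:distance_under_phi_x}); (B) the identity $AP = PAP$ arising from $A \in \calL(M^p)$; (C) the norm-continuous extension to $\beta X$ of the compressed projections $M_{\1_K}P_{x}M_{\1_{K'}}$ supplied by Assumption~\ref{ass:shifts}.

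For (i), given $\epsilon > 0$ I pick $B_0 \in \BO$ with $\|AP-B_0\|<\epsilon$ and $\prop(B_0) \leq \omega$. With $K_\omega := B[K,\omega]$ and $K_{2\omega} := B[K_\omega,\omega]$ (compact by properness of $X$), the propagation bound of $(B_0)_{x_\iota}$ gives
\[
\|M_{\1_K}(AP)_{x_\iota}(I - M_{\1_{K_\omega}})\| \leq 2\epsilon
\]
uniformly in $\iota$, so it suffices to control $M_{\1_K}(AP)_{x_\iota}M_{\1_{K_\omega}}$. Factoring $AP = PAP$ and applying the same propagation trick on the right of the $(B_0)$-approximant,
\[
M_{\1_K}(AP)_{x_\iota} M_{\1_{K_\omega}} = \bigl[M_{\1_K} P_{x_\iota} M_{\1_{K_{2\omega}}}\bigr] \bigl[(AP)_{x_\iota} M_{\1_{K_\omega}}\bigr] + O(\epsilon).
\]
Assumption~\ref{ass:shifts} gives norm convergence of the first bracket to the compact operator $M_{\1_K} P_x M_{\1_{K_{2\omega}}}$; the second bracket is uniformly bounded by $\|AP\|$ and weakly convergent to $A_xP_x M_{\1_{K_\omega}}$ by Proposition~\ref{prop:weak_limit_operators}. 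To upgrade the product to a norm-convergent one, I invoke the second part of Lemma~\ref{lem:strong_and_compact_convergence}: it is enough that the adjoints $\bigl((AP)_{x_\iota} M_{\1_{K_\omega}}\bigr)^* = M_{\1_{K_\omega}}((AP)_{x_\iota})^*$ converge strongly, i.e.\ a compressed form of (iv).

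Claim (ii) follows by the symmetric argument, compressing on the right instead of on the left. Claim (iii) follows from (ii): for $f \in L_p$ with $\spt f$ compact, $(AP)_{x_\iota}f = (AP)_{x_\iota} M_{\1_{\spt f}} f \to A_x P_x f$ in norm by (ii), and density of compactly supported functions together with the uniform bound $\|(AP)_{x_\iota}\| \leq \|AP\|$ from Corollary~\ref{cor:limit_operator_norm} extends this to all $f \in L_p$. For (iv), the entire setup transfers to $L_q$: by Theorem~\ref{thm:BDO_algebraic}(e) we have $(AP)^* \in \BDO^q$, $P^*$ is a bounded projection onto $\ran P^* \subseteq L_q$, and the analogs of Assumptions~\ref{ass:subspaces_and_projection} and~\ref{ass:shifts} are inherited by taking adjoints, so the same reasoning as for (iii) yields (iv).

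The principal obstacle is the apparent circular dependence between (i) and (iv) through Lemma~\ref{lem:strong_and_compact_convergence}. I plan to resolve it by treating (i)--(iv) as a single package and running the cutoff-and-approximation scheme simultaneously on $L_p$ and $L_q$: on each side the argument reduces to the norm continuity supplied by Assumption~\ref{ass:shifts} and the weak convergence supplied by Proposition~\ref{prop:weak_limit_operators}, while the compressed strong convergence of adjoints required to close the Lemma~\ref{lem:strong_and_compact_convergence} step on one side is provided by (iii) on the other side (i.e.\ by the $L_q$-analog of the density argument used to derive (iii) from (ii)). Thus the four statements are established together, with the common accuracy parameter $\epsilon$ controlling the band approximation.
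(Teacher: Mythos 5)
Your reduction of (i) and (ii) to norm convergence of the two-sided compressions $M_{\1_K}(AP)_{x_\iota}M_{\1_{K'}}$ (band approximation plus the propagation bound, cf.\ Proposition \ref{prop:BO_limit_ops}) and your derivation of (iii) by density are sound and run parallel to the paper's Lemma \ref{lem:compact_convergence} and Corollary \ref{cor:strong_convergence}. The gap is in the two-sided compression step itself, and the circularity you flag is real, not apparent. In your factorization $M_{\1_K}(AP)_{x_\iota}M_{\1_{K_\omega}} = \bigl[M_{\1_K}P_{x_\iota}M_{\1_{K_{2\omega}}}\bigr]\bigl[(AP)_{x_\iota}M_{\1_{K_\omega}}\bigr] + O(\epsilon)$ the compactness sits only on the left factor, so the merely weak convergence of the right factor cannot be upgraded: a compact operator composed on the right with a WOT-convergent bounded net need not converge in norm (on $\ell_2$ take $D_nf := \dupa{f}{e_n}e_1$ and $C$ the rank-one projection onto the span of $e_1$; then $D_n \to 0$ weakly but $CD_n = D_n$ has norm one). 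Hence you genuinely need the strong convergence of the adjoints to feed into Lemma \ref{lem:strong_and_compact_convergence}, and that statement is, after taking adjoints, exactly the norm convergence (i)/(ii) for $(AP)^*$ on $L_q$ --- the adjoint-equivalent of what you are proving. Running the scheme ``simultaneously'' on $L_p$ and $L_q$ does not break this loop: each side's missing ingredient is the other side's conclusion, there is no base case, and the coupling is through constants of size $\norm{P}\,\norm{AP}$ rather than through $\epsilon$, so no quantitative bootstrap closes it.

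The paper closes the argument by putting compactness on \emph{both} sides. Since $AP = P(AP)P$, Lemma \ref{lem:range_of_limops} gives $(AP)_y = P_y(AP)_yP_y$ for all $y \in \beta X$, and telescoping yields terms involving $M_{\1_K}(P_{x_\iota}-P_x)$ and $(P_{x_\iota}-P_x)M_{\1_{K'}}$, which tend to $0$ in norm because Assumption \ref{ass:shifts} provides norm convergence of the two-sided compressions of $P$ (upgraded to one-sided by Lemma \ref{lem:compact_convergence}, or equally well by your cutoff trick applied to $P \in \BDO^p$), plus the middle term $M_{\1_K}P_x\bigl((AP)_{x_\iota}-(AP)_x\bigr)P_xM_{\1_{K'}}$. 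There both flanking operators $M_{\1_K}P_x$ and $P_xM_{\1_{K'}}$ are compact (Assumption \ref{ass:subspaces_and_projection} and Lemma \ref{lem:range_of_limops}), and for a bounded net a compact--weak--compact sandwich does converge in norm, so the weak convergence from Proposition \ref{prop:weak_limit_operators} suffices and no strong convergence of adjoints is ever needed. If you insert the projection on both sides of your $O(\epsilon)$-decomposition in this way, your argument goes through; (iv) then follows from (i) simply by taking adjoints and repeating the density argument, with no need to redevelop the assumptions on $L_q$.
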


For better readability we will divide the proof into several propositions and lemmas, which are useful and interesting in their own right.

\begin{proposition}
\label{prop:BO_limit_ops}
If $B \in \BO$, then $B_x \in \BO$ and $\prop(B_x) \leq \prop(B)$ for all $x \in \beta X$.
\end{proposition}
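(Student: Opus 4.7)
The plan is to transport propagation across the conjugation $B \mapsto U_x^p B (U_x^p)^{-1}$ for $x \in X$ by a direct computation, and then pass to the weak limit for $x \in \bdry X$ using density of $X$ in $\beta X$.

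First I would handle the case $x \in X$, where $B_x = U_x^p B (U_x^p)^{-1}$. The key identity is a consequence of Lemma \ref{lem:multiplication_and_shifts}: from $M_g U_x^p = U_x^p M_{g\circ\phi_x^{-1}}$ one also obtains $(U_x^p)^{-1} M_g = M_{g\circ\phi_x^{-1}} (U_x^p)^{-1}$. Applying these with $g = \1_{K'}$ on the left and $g = \1_K$ on the right, one gets
\[
M_{\1_{K'}} B_x M_{\1_K} = U_x^p\, M_{\1_{\phi_x(K')}} B M_{\1_{\phi_x(K)}}\, (U_x^p)^{-1}.
\]
Since $U_x^p$ is a bijective isometry (Lemma \ref{lem:surjective_isometry}), the left-hand side vanishes if and only if $M_{\1_{\phi_x(K')}} B M_{\1_{\phi_x(K)}} = 0$. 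But $\phi_x$ is an isometry of $(X,d)$, so $\dist(\phi_x(K),\phi_x(K')) = \dist(K,K')$. Combining these observations, for any $K,K'$ with $\dist(K,K') > \prop(B)$ we have $M_{\1_{K'}} B_x M_{\1_K} = 0$, and hence $\prop(B_x) \leq \prop(B)$ (in fact, with equality by symmetry).

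For $x \in \bdry X$, pick a net $(x_\iota)$ in $X$ with $x_\iota \to x$; this is possible because $X$ is dense in $\beta X$. By Proposition \ref{prop:weak_limit_operators} (applied to the operator $B$), $B_x$ is the weak operator limit of $U_{x_\iota}^p B (U_{x_\iota}^p)^{-1}$. Fix $K,K' \subseteq X$ with $\dist(K,K') > \prop(B)$. The identity from step one, applied at each $x_\iota \in X$, yields
\[
M_{\1_{K'}} \bigl(U_{x_\iota}^p B (U_{x_\iota}^p)^{-1}\bigr) M_{\1_K} = U_{x_\iota}^p\, M_{\1_{\phi_{x_\iota}(K')}} B M_{\1_{\phi_{x_\iota}(K)}}\, (U_{x_\iota}^p)^{-1} = 0,
\]
since $\dist(\phi_{x_\iota}(K),\phi_{x_\iota}(K')) = \dist(K,K') > \prop(B)$. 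Passing to the weak limit (both multiplication operators are bounded, so composition is weakly sequentially continuous on either side) gives $M_{\1_{K'}} B_x M_{\1_K} = 0$, and thus $\prop(B_x) \leq \prop(B)$.

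There is no serious obstacle here; the only point that requires attention is the bookkeeping in the commutation identity $M_{\1_{K'}} U_x^p B (U_x^p)^{-1} M_{\1_K} = U_x^p M_{\1_{\phi_x(K')}} B M_{\1_{\phi_x(K)}} (U_x^p)^{-1}$, which follows immediately from Lemma \ref{lem:multiplication_and_shifts}. Everything else is a matter of exploiting that $\phi_x$ is an isometry (so distances between sets are preserved) and that weak limits preserve the identity $0$.
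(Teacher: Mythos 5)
Your argument is correct and is essentially the paper's own proof: conjugate $B$ by $U_{x_\iota}^p$, use the commutation relation of Lemma \ref{lem:multiplication_and_shifts} together with the fact that $\phi_{x_\iota}$ preserves distances (Lemma \ref{lem:distance_under_phi_x}) to get $M_{\1_{K'}}U_{x_\iota}^pB(U_{x_\iota}^p)^{-1}M_{\1_K}=0$ whenever $\dist(K,K')>\prop(B)$, and then pass to the weak operator limit along a net $x_\iota\to x$. The only cosmetic difference is that you treat $x\in X$ and $x\in\bdry{X}$ separately, whereas the paper handles all $x\in\beta X$ in one step.
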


\begin{proof}
  Let $(x_\iota)$ be a net in $X$ that converges to $x$ and let $K,K'\subseteq X$ be compact with $\dist(K,K')>\prop(B)$. Then Lemma \ref{lem:distance_under_phi_x} yields
  \[\dist\bigl(\spt (\1_K\circ\phi_x^{-1}), \spt(\1_{K'}\circ \phi_x^{-1})\bigr)>\prop(B)\]
  and by Lemma \ref{lem:multiplication_and_shifts} we obtain
  \[M_{\1_K} U_{x_\iota}^pB(U_{x_\iota}^p)^{-1} M_{\1_{K'}} = U_{x_\iota}^p M_{\1_K\circ\phi_{x_\iota}^{-1}}BM_{\1_{K'}\circ \phi_{x\iota}^{-1}} (U_{x_\iota}^p)^{-1} = 0\]
  for all $\iota$. Taking the weak limit $x_{\iota} \to x$, we obtain $B_x \in \BO$ and $\prop(B_x) \leq \prop(B)$.
\end{proof}

\begin{corollary}
\label{cor:BDO_limit_ops}
If $B \in \BDO^p$, then $B_x \in \BDO^p$ for all $x \in \beta X$.
\end{corollary}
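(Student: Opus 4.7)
The plan is to pass the band-operator result of Proposition \ref{prop:BO_limit_ops} through the norm closure by means of a three-epsilon style argument, exploiting that the assignment $B \mapsto B_x$ is both linear and norm-contractive.

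First, I would observe that the assignment $B \mapsto B_x$ on $\calL\bigl(L_p(X,\mu)\bigr)$ defined via \eqref{eq:weak_limit} is linear: conjugation $B \mapsto U_{x_\iota}^p B (U_{x_\iota}^p)^{-1}$ is linear for each $\iota$, and the weak operator limit preserves linear combinations. Consequently, for any $B, C \in \calL\bigl(L_p(X,\mu)\bigr)$ one has $(B-C)_x = B_x - C_x$. Combined with Corollary \ref{cor:limit_operator_norm} this gives the key contractivity estimate
\[\norm{B_x - C_x} = \norm{(B-C)_x} \leq \norm{B-C}.\]

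Next, I would invoke the definition of $\BDO^p$ as the norm closure of $\BO$: pick a sequence $(B_n)_{n \in \N}$ in $\BO$ with $\norm{B_n - B} \to 0$. Proposition \ref{prop:BO_limit_ops} ensures that $(B_n)_x \in \BO$ for every $n$, and the contractivity above yields
\[\norm{(B_n)_x - B_x} \leq \norm{B_n - B} \to 0.\]
Hence $B_x$ is a norm limit of band operators and therefore lies in $\BDO^p$, which is closed by Theorem \ref{thm:BDO_algebraic}(b).

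I do not anticipate any real obstacle here, since all the work has already been done: Proposition \ref{prop:weak_limit_operators} gives the existence of the weak-limit extension, Corollary \ref{cor:limit_operator_norm} supplies the norm bound, and Proposition \ref{prop:BO_limit_ops} handles the band-operator case. The only mildly subtle point is making clear that the map $B \mapsto B_x$ is well-defined on all of $\calL\bigl(L_p(X,\mu)\bigr)$ (not just on $\calA^p$ or on $\BO$) before taking approximating sequences; this is precisely the content of Proposition \ref{prop:weak_limit_operators}, so no additional argument is needed.
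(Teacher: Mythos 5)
Your argument is correct and is essentially the paper's proof: both approximate $B$ in norm by band operators, apply Proposition \ref{prop:BO_limit_ops} to the approximants, use the linearity of $B \mapsto B_x$ together with the contractivity from Corollary \ref{cor:limit_operator_norm}, and conclude by the closedness of $\BDO^p$ from Theorem \ref{thm:BDO_algebraic}. The only difference is cosmetic (a sequence of approximants versus a single $\varepsilon$-approximant).
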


\begin{proof}
  For $B \in \BDO^p$ and $\varepsilon > 0$ there is a $B_0 \in \BO$ such that $\|B-B_0\| < \varepsilon$. Corollary \ref{cor:limit_operator_norm} yields
  \[\|B_x - (B_0)_x\| = \|(B-B_0)_x\| \leq \|B-B_0\| < \varepsilon,\]
  which implies $B_x \in \BDO^p$ by Theorem \ref{thm:BDO_algebraic} and Proposition \ref{prop:BO_limit_ops}.
\end{proof}

\begin{lemma}
\label{lem:BDO_commutator_uniform}
Let $B \in \BDO^p$. For all $\varepsilon > 0$ there is a $t_0$ such that for all $t \leq t_0$ and all $x \in \beta X$ we have
  \[\sup\limits_{\norm{f}_p = 1} \sum\limits_{j = 1}^{\infty} \norm{[B_x,M_{\varphi_{j,t}}]f}_p^p \leq \varepsilon.\]
\end{lemma}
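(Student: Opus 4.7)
The plan is to bootstrap from the pointwise statement in Proposition \ref{prop:BDO_commutator} by exploiting the quantitative nature of Lemma \ref{lem:BO_commutator}: the constant there depends on the band operator only through its norm and its propagation, both of which are controlled uniformly for the limit operators by Corollary \ref{cor:limit_operator_norm} and Proposition \ref{prop:BO_limit_ops}.

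First, I would fix $\varepsilon>0$ and pick a band operator $B_0\in\BO$ approximating $B$ in norm. Since $B_x = (B_0)_x + (B-B_0)_x$, the commutator splits and I can bound each piece separately. For the remainder $(B-B_0)_x$, Corollary \ref{cor:limit_operator_norm} gives $\norm{(B-B_0)_x}\leq \norm{B-B_0}$, and the same telescoping trick already used in the proof of Proposition \ref{prop:BDO_commutator} yields
\[
  \Biggl(\sum_{j=1}^{\infty}\norm{[(B-B_0)_x,M_{\varphi_{j,t}}]f}_p^p\Biggr)^{1/p}
  \leq 2\norm{B-B_0}\norm{f}_p,
\]
uniformly in $x\in\beta X$ and $t>0$, because $\sum_j\varphi_{j,t}^p=1$. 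Taking $\norm{B-B_0}$ sufficiently small makes this contribution at most $\varepsilon^{1/p}/2$.

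Second, I would handle the band part $(B_0)_x$. Set $\omega:=\prop(B_0)$ and let $C$ be the constant from Lemma \ref{lem:BO_commutator} associated to this $\omega$. By Proposition \ref{prop:BO_limit_ops}, $(B_0)_x$ is a band operator with $\prop((B_0)_x)\leq \omega$, and by Corollary \ref{cor:limit_operator_norm}, $\norm{(B_0)_x}\leq\norm{B_0}$. The crucial observation is that Lemma \ref{lem:BO_commutator} can be applied to $(B_0)_x$ with the \emph{same} constant $C$ for every $x\in\beta X$, giving
\[
  \Biggl(\sum_{j=1}^{\infty}\norm{[(B_0)_x,M_{\varphi_{j,t}}]f}_p^p\Biggr)^{1/p}
  \leq C\,\norm{B_0}\,\norm{f}_p\,t^{1/p}
\]
whenever $t<\tfrac{1}{9\max\{\omega,\varepsilon\}}$. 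Choosing $t_0$ small enough that this is at most $\varepsilon^{1/p}/2$ and combining with the previous estimate by Minkowski's inequality for sums yields the claim.

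I do not expect a real obstacle here; the argument is essentially a uniform-in-$x$ version of Proposition \ref{prop:BDO_commutator}. The only point that requires some care is verifying that the constant produced by Lemma \ref{lem:BO_commutator} is genuinely independent of the particular band operator (it depends only on $\omega$, $p$ and the covering constant $N$ from Lemma \ref{lem:covering}), so that it applies simultaneously to $(B_0)_x$ for every $x\in\beta X$. Once this is noted, the estimate passes to the entire boundary and the lemma follows.
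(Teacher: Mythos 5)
Your argument is correct and is exactly the proof the paper has in mind: the band-operator case follows from Lemma \ref{lem:BO_commutator} together with Proposition \ref{prop:BO_limit_ops} and Corollary \ref{cor:limit_operator_norm} (the constant depending only on the propagation and the covering data), and the band-dominated case follows by approximation as in Proposition \ref{prop:BDO_commutator}. The only cosmetic point is the clash of the letter $\varepsilon$ in your appeal to Lemma \ref{lem:BO_commutator}, where it denotes the bounded-geometry constant rather than your error tolerance.
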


\begin{proof}
  For band operators this is Lemma \ref{lem:BO_commutator} combined with Proposition \ref{prop:BO_limit_ops}. For band-dominated operators we can use approximation as in Proposition \ref{prop:BDO_commutator} and Corollary \ref{cor:BDO_limit_ops}.
\end{proof}

\begin{lemma}
\label{lem:compact_convergence}
Let $B \in \BDO^p$ and let $(x_{\iota})$ be a net in $\beta X$ that converges to $x \in \beta X$. Assume
\[\lim\limits_{x_{\iota} \to x} \norm{M_{\1_K}\left(B_{x_\iota} - B_x\right)M_{\1_{K'}}} = 0.\]
for all $K,K' \subseteq X$ compact. Then also
\[\lim\limits_{x_{\iota} \to x} \norm{M_{\1_K}\left(B_{x_\iota} - B_x\right)} = 0 \quad \text{and} \quad \lim\limits_{x_{\iota} \to x} \norm{\left(B_{x_\iota} - B_x\right)M_{\1_{K'}}} = 0\]
for all $K,K' \subseteq X$ compact.
\end{lemma}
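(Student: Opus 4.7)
The plan is to approximate the band-dominated operator $B$ by a band operator $B_0$ and then exploit the fact that both $B_0$ and its limit operators have uniformly controlled propagation, so that their action followed by $M_{\1_K}$ is automatically localized to a compact enlargement of $K$. This reduces the one-sided statement to the two-sided hypothesis.

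More concretely, fix a compact set $K \subseteq X$ and $\varepsilon > 0$. First choose a band operator $B_0 \in \BO$ with $\norm{B - B_0} < \varepsilon$, and set $\omega := \prop(B_0)$. By Proposition \ref{prop:BO_limit_ops}, every limit operator $(B_0)_y$ (for $y \in \beta X$) is again a band operator with $\prop((B_0)_y) \leq \omega$. Define
\[
K_\omega := \{ y \in X : \dist(y, K) \leq \omega + 1 \}.
\]
This set is closed and bounded in the proper space $X$, hence compact. Since $\dist(K, X \setminus K_\omega) > \omega$, the propagation bound yields
\[
M_{\1_K} (B_0)_{x_\iota} = M_{\1_K} (B_0)_{x_\iota} M_{\1_{K_\omega}}, \qquad M_{\1_K} (B_0)_x = M_{\1_K} (B_0)_x M_{\1_{K_\omega}}.
\]

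Now perform the triangle inequality twice. On the one hand, using $B_{x_\iota} - B_x = (B_0)_{x_\iota} - (B_0)_x + (B - B_0)_{x_\iota} - (B - B_0)_x$ together with Corollary \ref{cor:limit_operator_norm} (which gives $\norm{(B - B_0)_y} \leq \varepsilon$ for all $y \in \beta X$), one gets
\[
\norm{M_{\1_K}(B_{x_\iota} - B_x)} \leq \norm{M_{\1_K}\bigl((B_0)_{x_\iota} - (B_0)_x\bigr)} + 2\varepsilon.
\]
On the other hand, inserting the localization $M_{\1_{K_\omega}}$ and reverting the splitting,
\[
\norm{M_{\1_K}\bigl((B_0)_{x_\iota} - (B_0)_x\bigr)} = \norm{M_{\1_K}\bigl((B_0)_{x_\iota} - (B_0)_x\bigr)M_{\1_{K_\omega}}} \leq \norm{M_{\1_K}(B_{x_\iota} - B_x)M_{\1_{K_\omega}}} + 2\varepsilon.
\]
Combining the two inequalities gives
\[
\norm{M_{\1_K}(B_{x_\iota} - B_x)} \leq \norm{M_{\1_K}(B_{x_\iota} - B_x)M_{\1_{K_\omega}}} + 4\varepsilon.
\]
By the hypothesis applied with the compact pair $(K, K_\omega)$, the first term on the right vanishes in the limit, so $\limsup_{x_\iota \to x} \norm{M_{\1_K}(B_{x_\iota} - B_x)} \leq 4\varepsilon$. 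Since $\varepsilon > 0$ was arbitrary, the first convergence follows. The second convergence follows by the symmetric argument, localizing on the right and enlarging $K'$ to $K'_\omega$ (using that band operators have the same locality property on both sides).

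The only nontrivial point is justifying that the enlargement $K_\omega$ is compact and that band-operator propagation transfers to limit operators; the first is ensured by properness of $X$ (bounded closed sets are compact) and the second is exactly Proposition \ref{prop:BO_limit_ops}. Everything else is a two-step triangle inequality using the uniform bound on $\norm{(B - B_0)_y}$, so no real obstacle arises.
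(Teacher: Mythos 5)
Your proof is correct, but it follows a genuinely different route from the paper. You approximate $B$ by a band operator $B_0$ with $\norm{B-B_0}<\varepsilon$, transfer the propagation bound to all limit operators via Proposition \ref{prop:BO_limit_ops}, and then use properness of $X$ to localize $M_{\1_K}(B_0)_y$ onto the compact enlargement $K_\omega$, so that the two-sided hypothesis (applied to the compact pairs $(K,K_\omega)$ and $(K'_\omega,K')$) plus the uniform bound $\norm{(B-B_0)_y}\leq\norm{B-B_0}$ from Corollary \ref{cor:limit_operator_norm} finishes the argument; the only point you leave implicit is the linearity $(B-B_0)_y = B_y-(B_0)_y$, which is immediate from uniqueness and linearity of weak limits, so there is no gap. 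The paper instead stays at the band-dominated level throughout: it invokes the uniform commutator estimate of Lemma \ref{lem:BDO_commutator_uniform} together with Lemma \ref{lem:sum_estimate}, chooses $j_0$ so that only the finitely many $\varphi_{j,t}$ with $j<j_0$ meet the compact set, and absorbs the tail into commutators $[M_{\varphi_{j,t}},B_{x_\iota}-B_x]$, with the compact set $K'$ containing $\spt\varphi_{j,t}$ for $j<j_0$ playing the role your $K_\omega$ plays. Your argument is shorter and more elementary (only the definition of propagation, Proposition \ref{prop:BO_limit_ops}, Corollary \ref{cor:limit_operator_norm} and properness), whereas the paper's version reuses the partition-of-unity machinery it needs anyway for later results (e.g.\ Proposition \ref{prop:local_norms} and Lemma \ref{lem:lower_norm_limit_operator}) and keeps all estimates uniform over $\set{B_y : y\in\beta X}$ without passing through a band-operator approximation; both ultimately rely on properness to produce the compact localizing set.
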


\begin{proof}
(i) Let $\varepsilon > 0$ and $K \subseteq X$ compact. By Lemma \ref{lem:BDO_commutator_uniform} we can choose a $t>0$ such that for all $y \in \beta X$ we have
\begin{equation} \label{eq:compact_convergence}
  \sup\limits_{\norm{f}_p = 1} \sum\limits_{j = 1}^{\infty} \norm{[B_y,M_{\varphi_{j,t}}]f}_p^p \leq \varepsilon
\end{equation}
  Choose $j_0 \in \N$ sufficiently large such that $M_{\varphi_{j,t}^p}M_{\1_K} = 0$ for $j \geq j_0$. Then, using Lemma \ref{lem:sum_estimate} and Corollary \ref{cor:limit_operator_norm}, we get
  \begin{align} \label{eq:limit_operator_existence2}
  \norm{M_{\1_K}\left(B_{x_{\iota}} - B_x\right)} &= \Biggl\|\sum\limits_{j < j_0} M_{\varphi_{j,t}^p}M_{\1_K}\left(B_{x_{\iota}} - B_x\right)\Biggr\|\notag\\
  &= \sup\limits_{\norm{f}_p = 1} \Biggl\|\sum\limits_{j < j_0} M_{\varphi_{j,t}^p}M_{\1_K}\left(B_{x_{\iota}} - B_x\right)f\Biggr\|_p\notag\\
  &\leq \sup\limits_{\norm{f}_p = 1} \Biggl(\sum\limits_{j < j_0} \norm{M_{\varphi_{j,t}}M_{\1_K}\left(B_{x_{\iota}} - B_x\right)f}_p^p\Biggr)^{1/p}\notag\\
  &\leq \sup\limits_{\norm{f}_p = 1} \Biggl(\sum\limits_{j < j_0} \norm{M_{\1_K}\left[M_{\varphi_{j,t}},B_{x_{\iota}} - B_x\right]f}_p^p\Biggr)^{1/p}\notag\\
  &\quad + \sup\limits_{\norm{f}_p = 1} \Biggl(\sum\limits_{j < j_0} \norm{M_{\1_K}\left(B_{x_{\iota}} - B_x\right)M_{\varphi_{j,t}}f}_p^p\Biggr)^{1/p}\notag\\
  &\leq 2\norm{B}\varepsilon^{1/p} + \sup\limits_{\norm{f}_p = 1} \Biggl(\sum\limits_{j < j_0} \norm{M_{\1_K}\left(B_{x_{\iota}} - B_x\right)M_{\varphi_{j,t}}f}_p^p\Biggr)^{1/p}.
  \end{align}
  Let $K' \subseteq X$ be a compact set such that $M_{\varphi_{j,t}} = M_{\1_{K'}}M_{\varphi_{j,t}}$ for all $j < j_0$. It follows
  \begin{align*}
  \sup\limits_{\norm{f}_p = 1} \Biggl(\sum\limits_{j < j_0} \norm{M_{\1_K}\left(B_{x_{\iota}} - B_x\right)M_{\varphi_{j,t}}f}_p^p\Biggr)^{1/p} &= \sup\limits_{\norm{f}_p = 1} \Biggl(\sum\limits_{j < j_0} \norm{M_{\1_K}\left(B_{x_{\iota}} - B_x\right)M_{\1_{K'}}M_{\varphi_{j,t}}f}_p^p\Biggr)^{1/p}\\
  &\leq \norm{M_{\1_K}\left(B_{x_{\iota}} - B_x\right)M_{\1_{K'}}}\sup\limits_{\norm{f}_p = 1} \Biggl(\sum\limits_{j < j_0} \norm{M_{\varphi_{j,t}}f}_p^p\Biggr)^{1/p}\\
  &\leq \norm{M_{\1_K}\left(B_{x_{\iota}} - B_x\right)M_{\1_{K'}}},
  \end{align*}
	which tends to $0$ by assumption. As $\varepsilon$ was arbitrary, we conclude
  \[\lim\limits_{x_{\iota} \to x} \norm{M_{\1_K}\left(B_{x_\iota} - B_x\right)} = 0.\]
	
	(ii) Let $\varepsilon > 0$ and $K' \subseteq X$ compact. Again, we choose $t>0$ such that \eqref{eq:compact_convergence} holds and $j_0 \in \N$ sufficiently large such that $M_{\varphi_{j,t}}M_{\1_{K'}} = 0$ for $j \geq j_0$. We have
  \[\norm{\left(B_{x_\iota} - B_x\right)M_{\1_{K'}}} \leq \Biggl\|\sum\limits_{j < j_0} M_{\varphi_{j,t}^p}\left(B_{x_\iota} - B_x\right)M_{\1_{K'}}\Biggr\| + \Biggl\|\sum\limits_{j \geq j_0} M_{\varphi_{j,t}^p}\left(B_{x_\iota} - B_x\right)M_{\1_{K'}}\Biggr\|.\]
  The first term tends to $0$ as $\sum\limits_{j < j_0} \varphi_{j,t}^p$ has compact support. The second term can be estimated using Lemma \ref{lem:sum_estimate}:
  \begin{align*}
  \Biggl\|\sum\limits_{j \geq j_0} M_{\varphi_{j,t}^p}\left(B_{x_\iota} - B_x\right)M_{\1_{K'}}\Biggr\| &= \sup\limits_{\norm{f}_p = 1} \Biggl\|\sum\limits_{j \geq j_0} M_{\varphi_{j,t}^p}\left(B_{x_\iota} - B_x\right)M_{\1_{K'}}f\Biggr\|_p\\
  &\leq \sup\limits_{\norm{f}_p = 1}\Biggl(\sum\limits_{j \geq j_0} \norm{M_{\varphi_{j,t}}\left(B_{x_\iota} - B_x\right)M_{\1_{K'}}f}_p^p\Biggr)^{1/p}\\
  &= \sup\limits_{\norm{f}_p = 1}\Biggl(\sum\limits_{j \geq j_0} \norm{\left[M_{\varphi_{j,t}},B_{x_\iota} - B_x\right]M_{\1_{K'}}f}_p^p\Biggr)^{1/p}\\
  &\leq 2\varepsilon^{1/p}.
  \end{align*}
  As $\varepsilon$ was arbitrary, we again conclude
  \[\lim\limits_{x_{\iota} \to x} \norm{\left(B_{x_\iota} - B_x\right)M_{\1_{K'}}} = 0.\qedhere\]
\end{proof}

\begin{corollary}
\label{cor:strong_convergence}
Let $B \in \BDO^p$ and let $(x_{\iota})$ be a net in $\beta X$ that converges to $x \in \beta X$. Assume
\[\lim\limits_{x_{\iota} \to x} \norm{M_{\1_K}\left(B_{x_\iota} - B_x\right)M_{\1_{K'}}} = 0.\]
for all $K,K' \subseteq X$ compact. Then $B_{x_\iota} \to B_x$ in the strong operator topology.
\end{corollary}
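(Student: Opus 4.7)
The plan is to reduce the claim to a standard density argument, since Lemma \ref{lem:compact_convergence} already gives strong convergence on the dense subspace of compactly supported functions. Specifically, Lemma \ref{lem:compact_convergence}(ii) says that $\norm{(B_{x_\iota} - B_x)M_{\1_{K'}}} \to 0$ for every compact $K'\subseteq X$, and to upgrade this to strong convergence on all of $L_p(X,\mu)$ I need only a uniform bound on the operator norms of $B_{x_\iota} - B_x$ together with density of compactly supported functions.

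First, I would note the uniform bound: Corollary \ref{cor:limit_operator_norm} gives $\norm{B_y}\leq\norm{B}$ for every $y\in\beta X$, hence $\norm{B_{x_\iota}-B_x}\leq 2\norm{B}$ uniformly in $\iota$. Next, fix $f\in L_p(X,\mu)$ and $\varepsilon>0$. By Assumption \ref{ass:space}, $\mu$ is Radon, so compactly supported continuous functions are dense in $L_p(X,\mu)$; pick $g\in L_p(X,\mu)$ with $\spt g$ contained in some compact $K'\subseteq X$ and $\norm{f-g}_p<\varepsilon$. Since $M_{\1_{K'}}g = g$, the triangle inequality yields
\[
\norm{(B_{x_\iota}-B_x)f}_p \le 2\norm{B}\,\norm{f-g}_p + \norm{(B_{x_\iota}-B_x)M_{\1_{K'}}}\,\norm{g}_p \le 2\norm{B}\varepsilon + \norm{(B_{x_\iota}-B_x)M_{\1_{K'}}}\,\norm{g}_p.
\]
Applying Lemma \ref{lem:compact_convergence}(ii) to $K'$, the second term tends to $0$, and so $\limsup_{x_\iota\to x}\norm{(B_{x_\iota}-B_x)f}_p \le 2\norm{B}\varepsilon$. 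Since $\varepsilon>0$ was arbitrary, $B_{x_\iota}f \to B_xf$, proving strong convergence.

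There is no serious obstacle here: all the real work was already done in Lemma \ref{lem:compact_convergence}, which turns the hypothesized ``doubly compactly cut'' norm convergence into one-sided ``compactly cut'' norm convergence; the corollary is just the observation that one-sided compactly cut norm convergence plus a uniform norm bound implies strong convergence via density of compactly supported $L_p$-functions.
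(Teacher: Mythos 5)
Your proof is correct and follows essentially the same route as the paper: approximate $f$ by a compactly supported function, control the remainder with the uniform bound $\norm{B_y}\le\norm{B}$ from Corollary \ref{cor:limit_operator_norm}, and apply Lemma \ref{lem:compact_convergence} to the compactly supported part. The only (cosmetic) difference is that the paper uses the truncation $M_{\1_K}f$ directly rather than invoking density of compactly supported continuous functions.
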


\begin{proof}
Let $\varepsilon > 0$ and $f \in L_p(X,\mu)$. Choose $K \subseteq X$ compact such that $\norm{M_{\1_{K^c}}f}_p < \varepsilon$. Then
\begin{align*}
\norm{\left(B_{x_\iota} - B_x\right)f}_p &\leq \norm{\left(B_{x_\iota} - B_x\right)M_{\1_K}f}_p + \norm{\left(B_{x_\iota} - B_x\right)M_{\1_{K^c}}f}_p\\
&\leq \norm{\left(B_{x_\iota} - B_x\right)M_{\1_K}}\norm{f}_p + 2\norm{B}\varepsilon
\end{align*}
by Corollary \ref{cor:limit_operator_norm}. As $\varepsilon$ was arbitrary, Lemma \ref{lem:compact_convergence} yields the assertion.
\end{proof}

\begin{lemma} \label{lem:range_of_limops}
\begin{itemize}
	\item[(i)] $P_x$ is a projection for every $x \in \beta X$.
	\item[(ii)] $M_{\1_K}P_x$ and $P_xM_{\1_K}$ are compact for all compact sets $K \subseteq X$ and $x \in \beta X$.
	\item[(iii)] Let $B := AP$ for $A \in \calA^p$. Then $B_x = B_xP_x = P_xB_x$ for every $x \in \beta X$.
\end{itemize}
\end{lemma}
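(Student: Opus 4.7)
The overall strategy is to reduce each assertion at a boundary point $x \in \bdry X$ to the corresponding identity at points $x_\iota \in X$, where the limit operator is simply the conjugate by the surjective isometry $U_{x_\iota}^p$. The workhorse is Corollary \ref{cor:strong_convergence}: applied to $P \in \BDO^p$, the norm convergence of compressions supplied by Assumption \ref{ass:shifts} yields strong convergence $P_{x_\iota} \to P_x$ on $L_p$ whenever $x_\iota \to x$ in $\beta X$.

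For (i), the identity $P^2 = P$ gives $P_{x_\iota}^2 = P_{x_\iota}$ for every $x_\iota \in X$. For $x \in \bdry X$, the uniform bound $\|P_{x_\iota}\| \leq \|P\|$ (Corollary \ref{cor:limit_operator_norm}) together with the strong convergence above ensures $P_{x_\iota}^2 f \to P_x^2 f$ for each $f$; since also $P_{x_\iota}^2 = P_{x_\iota} \to P_x$ strongly, we conclude $P_x^2 = P_x$. For (ii), at $x \in X$ Lemma \ref{lem:multiplication_and_shifts} gives $M_{\1_K} P_x = U_x^p M_{\1_{\phi_x(K)}} P (U_x^p)^{-1}$, which is compact by Assumption \ref{ass:subspaces_and_projection} since $\phi_x(K)$ is compact (and analogously for $P_x M_{\1_K}$). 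At $x \in \bdry X$, Lemma \ref{lem:compact_convergence} applied to $P \in \BDO^p$ yields $\|M_{\1_K}(P_{x_\iota} - P_x)\| \to 0$ and $\|(P_{x_\iota} - P_x)M_{\1_K}\| \to 0$, so $M_{\1_K} P_x$ and $P_x M_{\1_K}$ are norm limits of compact operators and therefore compact.

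For (iii), write $B = AP$ and note $BP = AP^2 = B$ as well as $PB = PAP = AP = B$ (here $PA = A$ since $\ran A \subseteq M^p = \ran P$). Conjugating by $U_{x_\iota}^p$ for $x_\iota \in X$ yields $B_{x_\iota} P_{x_\iota} = B_{x_\iota} = P_{x_\iota} B_{x_\iota}$. The first identity passes to $x \in \bdry X$ via the splitting
\[
B_{x_\iota} P_{x_\iota} f - B_x P_x f = B_{x_\iota}(P_{x_\iota} - P_x) f + (B_{x_\iota} - B_x) P_x f,
\]
tested against any $g$ in the dual: the first summand vanishes by the uniform bound $\|B_{x_\iota}\| \leq \|B\|$ combined with strong convergence $(P_{x_\iota} - P_x) f \to 0$, and the second by weak convergence $B_{x_\iota} \to B_x$. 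Since $B_{x_\iota} P_{x_\iota} = B_{x_\iota} \to B_x$ weakly, uniqueness of weak operator limits forces $B_x = B_x P_x$.

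The harder direction $P_x B_x = B_x$ is the main obstacle, because the analogous splitting
\[
P_{x_\iota} B_{x_\iota} f - P_x B_x f = (P_{x_\iota} - P_x) B_{x_\iota} f + P_x(B_{x_\iota} - B_x) f
\]
produces, after dualizing, a term $\langle B_{x_\iota} f, (P_{x_\iota}^* - P_x^*) g\rangle$ that only vanishes under strong convergence of $P_{x_\iota}^* \to P_x^*$ on $L_q$. To obtain this I would compute the adjoint relation $(U_x^p)^* = (U_x^q)^{-1}$ via a direct change of variables using the Radon--Nikodym identity in Assumption \ref{ass:shifts} and the algebraic fact $p/q = p - 1$; this gives $(P_{x_\iota})^* = (P^*)_{x_\iota}$ for $x_\iota \in X$ and, by weak limit uniqueness, $(P_x)^* = (P^*)_x$ throughout $\beta X$. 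Taking adjoints of the continuity statement in Assumption \ref{ass:shifts} then supplies the hypothesis of Corollary \ref{cor:strong_convergence} for $P^* \in \BDO^q$ (Theorem \ref{thm:BDO_algebraic}(e)), which yields the required strong convergence and closes the argument via the same uniqueness of weak operator limits.
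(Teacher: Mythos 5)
Your proposal is correct, and for (i), (ii) and the identity $B_x=B_xP_x$ it coincides with the paper's argument (strong convergence $P_{x_\iota}\to P_x$ from Corollary \ref{cor:strong_convergence} plus the uniform bound, norm limits of compacts for (ii), conjugation at interior points). Where you genuinely diverge is the step $P_xB_x=B_x$. The paper never touches adjoint shifts here: it tests against $g\in L_q(X,\mu)$, splits $g=M_{\1_K}g+M_{\1_{K^c}}g$ with $\norm{M_{\1_{K^c}}g}_q<\varepsilon$, and kills the troublesome term $\dupa{(P_{x_\iota}-P_x)B_{x_\iota}f}{g}$ using only the one-sided norm convergence $\norm{M_{\1_K}(P_{x_\iota}-P_x)}\to 0$ from Lemma \ref{lem:compact_convergence} together with the uniform bounds; the remaining term $\dupa{(B_{x_\iota}-B_x)f}{P_x^*g}$ is handled by weak convergence alone. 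You instead upgrade to $*$-strong convergence, proving $(U_x^p)^*=(U_x^q)^{-1}$, hence $(P_{x_\iota})^*=U_{x_\iota}^qP^*(U_{x_\iota}^q)^{-1}$ and $(P_x)^*=(P^*)_x$, and then running the $L_q$-versions of Lemma \ref{lem:compact_convergence} and Corollary \ref{cor:strong_convergence} for $P^*\in\BDO^q$. This works, but it silently requires checking that the dual shift system satisfies the analogue of Assumption \ref{ass:shifts} (existence and continuity of an admissible $h_x^{(q)}$, compatibility of phases so that the adjoint identity holds, and the adjointed continuity of the compressions $x\mapsto M_{\1_{K'}}U_x^qP^*(U_x^q)^{-1}M_{\1_K}$) — routine verifications, and machinery the paper itself only invokes implicitly later (e.g.\ Theorem \ref{thm:limit_operator_existence}(iv)), but heavier than needed at this point. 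Note also that the strong $L_q$-convergence $P_{x_\iota}^*\to P_x^*$ you want can be extracted without any dual shifts: $\norm{(P_{x_\iota}^*-P_x^*)M_{\1_K}}=\norm{M_{\1_K}(P_{x_\iota}-P_x)}\to 0$ by Lemma \ref{lem:compact_convergence}, and cutting off $g$ by $M_{\1_K}$ plus uniform boundedness finishes it — which is in essence exactly the paper's splitting. So: your route buys a reusable $*$-strong convergence statement for $P$ at the cost of extra dual bookkeeping; the paper's route stays entirely inside the $L_p$ framework already established.
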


\begin{proof}
First assume that $x \in X$. Then $P_x = U_x^pP(U_x^p)^{-1}$ is a projection onto $\ran(U_x^pP)$ with $\|P_x\| = \|P\|$. Also,
\[M_{\1_K}P_x = M_{\1_K}U_x^pP(U_x^p)^{-1} = U_x^pM_{\1_{\phi_x(K)}}P(U_x^p)^{-1}\]
(see Lemma \ref{lem:multiplication_and_shifts}) and the latter is compact by Assumption \ref{ass:subspaces_and_projection}. Similarly, $P_xM_{\1_K}$ is compact. Moreover,
\[B_x = U_x^pB(U_x^p)^{-1} = U_x^pPB(U_x^p)^{-1} = U_x^pP(U_x^p)^{-1}U_x^pB(U_x^p)^{-1} = P_xB_x\]
and similarly $B_x = B_xP_x$.

Now let $x \in \Gamma X$. Then there is a net $(x_{\iota})$ in $X$ with $x_{\iota} \to x$. By Assumption \ref{ass:shifts} and Corollary \ref{cor:strong_convergence}, we have $P_{x_{\iota}} \to P_x$ strongly. This implies that $P_x$ is again a projection. Also, Lemma \ref{lem:compact_convergence} implies $M_{\1_K}P_{x_{\iota}} \to M_{\1_K}P_x$ and $P_{x_{\iota}}M_{\1_K} \to P_xM_{\1_K}$ so that $M_{\1_K}P_x$ and $P_xM_{\1_K}$ are again compact. Moreover, by Lemma \ref{lem:compact_convergence}, we have
	\begin{equation} \label{eq:lem_range_of_limops}
	\norm{M_{\1_K}(P_{x_{\iota}} - P_x)} \to 0 \quad \text{and} \quad \norm{(P_{x_{\iota}} - P_x)M_{\1_{K'}}} \to 0
	\end{equation}
	as $x_{\iota} \to x$. Let $\varepsilon > 0$, $f \in L_p(X,\mu)$, $g \in L_q(X,\mu)$ and choose $K \subseteq X$ compact such that $\norm{M_{\1_{K^c}}g}_q < \varepsilon$. With the usual dual pairing $\dupa{\cdot}{\cdot}$ of $L_p(X,\mu)$ and $L_q(X,\mu)$ we get
  \begin{align*}
  \dupa{\left(B_{x_{\iota}} - P_xB_x\right)f}{g} &= \dupa{\left(P_{x_{\iota}}B_{x_{\iota}} - P_xB_x\right)f}{g}\\
  &= \dupa{\left(P_{x_{\iota}} - P_x\right)B_{x_{\iota}}f}{g} + \dupa{P_x\left(B_{x_{\iota}} - B_x\right)f}{g}\\
  &= \dupa{M_{\1_K}\left(P_{x_{\iota}} - P_x\right)B_{x_{\iota}}f}{g} + \dupa{\left(P_{x_{\iota}} - P_x\right)B_{x_{\iota}}f}{M_{\1_{K^c}}g} + \dupa{\left(B_{x_{\iota}} - B_x\right)f}{P_x^*g}.
  \end{align*}
  The first and the third term tend to $0$ as $x_{\iota} \to x$ by \eqref{eq:lem_range_of_limops} and Proposition \ref{prop:weak_limit_operators}, whereas the second term is bounded by $2\norm{P}\norm{B}\norm{f}_p\varepsilon$. As $\varepsilon$ was arbitrary, we get $B_{x_\iota} \to P_xB_x$ weakly, which implies $P_xB_x = B_x$. Similarly, we obtain $B_xP_x = P_x$.
\end{proof}

\begin{proof}[Proof of Theorem \ref{thm:limit_operator_existence}]
  Let $A \in \calA^p$, $K,K' \subseteq X$ compact and let $(x_{\iota})$ be a net in $X$ converging to $x \in \beta X$. Define $B := AP \in \BDO^p$. By Lemma \ref{lem:compact_convergence}, Corollary \ref{cor:strong_convergence} and Lemma \ref{lem:range_of_limops}(iii), it suffices to show
	\[\lim\limits_{x_{\iota} \to x} \norm{M_{\1_K}\left(B_{x_\iota} - B_x\right)M_{\1_{K'}}} = 0.\]
	By Proposition \ref{prop:weak_limit_operators}, the weak limits
  \[P_x := \wlim\limits_{x_\iota \to x} P_{x_\iota} \quad \text{and} \quad B_x := \wlim\limits_{x_\iota \to x} B_{x_\iota}\]
  exist and do not depend on the net converging to $x$. By Corollary \ref{cor:limit_operator_norm} and Lemma \ref{lem:range_of_limops},
  \begin{align*}
  \norm{M_{\1_K}(B_{x_{\iota}} - B_x)M_{\1_{K'}}} &= \norm{M_{\1_K}(P_{x_{\iota}}B_{x_{\iota}}P_{x_{\iota}} - P_xB_xP_x)M_{\1_{K'}}}\\
  	&\leq \norm{M_{\1_K}(P_{x_{\iota}} - P_x)B_{x_{\iota}}P_{x_{\iota}}M_{\1_{K'}}} + \norm{M_{\1_K}P_xB_{x_{\iota}}(P_{x_{\iota}} - P_x)M_{\1_{K'}}}\\
	&\quad + \norm{M_{\1_K}P_x(B_{x_{\iota}} - B_x)P_xM_{\1_{K'}}}\\
	&\leq \norm{M_{\1_K}(P_{x_{\iota}} - P_x)}\norm{B}\norm{P} + \norm{P}\norm{B}\norm{(P_{x_{\iota}} - P_x)M_{\1_{K'}}}\\
	&\quad + \norm{M_{\1_K}P_x(B_{x_{\iota}} - B_x)P_xM_{\1_{K'}}}.
  \end{align*}
  The first and the second term tend to $0$ by Lemma \ref{lem:compact_convergence}. $M_{\1_K}P_x$ and $P_xM_{\1_{K'}}$ are compact by Lemma \ref{lem:range_of_limops}. Therefore and since $B_{x_{\iota}} \to B_x$ weakly, the third term also tends to $0$ as $x_{\iota} \to x$.
\end{proof}

The above may of course also be applied to $Q := I-P$. The next corollary is therefore immediate from Assumption \ref{ass:shifts}, Theorem \ref{thm:limit_operator_existence}, Lemma \ref{lem:compact_convergence} and Corollary \ref{cor:strong_convergence}.

\begin{corollary} \label{cor:hats}
Let $A \in \calA^p$, $K \subseteq X$ compact and $(x_{\iota})$ a net in $\beta X$ converging to some $x \in \beta X$. Define $\hat{A} := AP+Q$. Then $\hat{A}_y = A_yP_y+Q_y$ for all $y \in \beta X$ and
\begin{itemize}
	\item[(i)] $\lim\limits_{x_\iota\to x} M_{\1_K}\hat{A}_{x_{\iota}} = M_{\1_K}\hat{A}_x$,
	\item[(ii)] $\lim\limits_{x_\iota\to x} \hat{A}_{x_{\iota}}M_{\1_K} = \hat{A}_xM_{\1_K}$,
	\item[(iii)] $\slim\limits_{x_\iota\to x} \hat{A}_{x_{\iota}}= \hat{A}_x$,
	\item[(iv)] $\slim\limits_{x_\iota\to x} \hat{A}_{x_{\iota}}^* = \hat{A}_x^*$.
\end{itemize}
\end{corollary}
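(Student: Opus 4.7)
The plan is to reduce everything to the facts already established for $AP$ and $P$ separately, using the decomposition $\hat{A}_y = (AP)_y + Q_y$ on $\beta X$.

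First I would verify the identification $\hat{A}_y = A_yP_y + Q_y$. Since the map $B \mapsto B_y$ arising from the weak extension in Proposition \ref{prop:weak_limit_operators} is linear, and $I_y = I$ trivially, I get $\hat{A}_y = (AP)_y + Q_y = (AP)_y + I - P_y$ for every $y \in \beta X$. Lemma \ref{lem:range_of_limops}(iii) applied to $B = AP \in \BDO^p$ gives $(AP)_y = A_yP_y$, so $\hat{A}_y = A_yP_y + Q_y$, which is the claimed formula.

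Next I would record the convergence statements for $P$ itself. Since $P \in \BDO^p$ (Assumption \ref{ass:subspaces_and_projection}) and $x \mapsto M_{\1_K} P_x M_{\1_{K'}}$ extends continuously to $\beta X$ (Assumption \ref{ass:shifts}), the hypothesis of Lemma \ref{lem:compact_convergence} is satisfied for $B = P$. Hence $\|M_{\1_K}(P_{x_\iota} - P_x)\| \to 0$ and $\|(P_{x_\iota} - P_x)M_{\1_K}\| \to 0$, and Corollary \ref{cor:strong_convergence} yields $P_{x_\iota} \to P_x$ strongly. For the adjoint, applying Theorem \ref{thm:limit_operator_existence}(iv) with $A = I$ (so $AP = P$ and $A_xP_x = P_x$) gives $P_{x_\iota}^* \to P_x^*$ strongly. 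Consequently $Q_{x_\iota} = I - P_{x_\iota}$ converges to $Q_x = I - P_x$ in each of the four senses (i)--(iv) spelled out in the corollary.

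Finally I would assemble the four claims by writing $\hat{A}_{x_\iota} = (AP)_{x_\iota} + I - P_{x_\iota}$ and subtracting the analogous decomposition of $\hat{A}_x$. For (i) and (ii) the term $M_{\1_K}(AP)_{x_\iota}$ respectively $(AP)_{x_\iota}M_{\1_K}$ converges in operator norm to the corresponding limit by Theorem \ref{thm:limit_operator_existence}(i)--(ii), while the $P_{x_\iota}$-term is handled by Lemma \ref{lem:compact_convergence} applied to $P$. For (iii) and (iv) the term $(AP)_{x_\iota}$ (respectively its adjoint) converges strongly by Theorem \ref{thm:limit_operator_existence}(iii)--(iv), while $P_{x_\iota}$ (respectively $P_{x_\iota}^*$) converges strongly as observed above. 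Adding yields the stated limits.

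No step presents a real obstacle; the only mild subtlety is keeping track that $P$ plays a double role (as part of the decomposition $\hat{A} = AP + Q$ and as the fixed projection from Assumption \ref{ass:subspaces_and_projection}), so that the four modes of convergence for $P_{x_\iota}$ come precisely from Assumption \ref{ass:shifts} feeding into Lemma \ref{lem:compact_convergence}, Corollary \ref{cor:strong_convergence}, and the $A = I$ case of Theorem \ref{thm:limit_operator_existence}(iv).
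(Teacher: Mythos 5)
Your argument is correct and follows essentially the same route the paper intends for this corollary: decompose $\hat{A}_y=(AP)_y+Q_y$ by linearity of the weak-limit map, identify $(AP)_y=A_yP_y$ via Lemma \ref{lem:range_of_limops}(iii), and combine Theorem \ref{thm:limit_operator_existence} for the $AP$-part with the convergence of $P_{x_\iota}$ (equivalently $Q_{x_\iota}$) obtained from Assumption \ref{ass:shifts} through Lemma \ref{lem:compact_convergence}, Corollary \ref{cor:strong_convergence}, and the $A=I_{M^p}$ case for the adjoints. This is precisely the combination of ingredients the paper cites when calling the corollary immediate, so nothing further is needed.
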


In the next proposition we summarize a few properties of limit operators, which follow directly from properties of the strong operator convergence.

\begin{proposition}
\label{prop:limit_operator_properties}
  Let $A,B\in \calA^p$, $(A_n)_{n \in \N}$ a sequence in $\calA^p$ and $x\in \beta X$. Then
  \begin{itemize}
  	\item[(a)] $A_xP_x \in \BDO^p$,
  	\item[(b)] $(A+B)_x = A_x + B_x$,
  	\item[(c)] $(AB)_x = A_xB_x$,
  	\item[(d)] $\norm{A_x} \leq \norm{A}$,
  	\item[(e)] if $A_n \to A$ in norm, then $(A_n)_x \to A_x$ in norm.
  \end{itemize}
\end{proposition}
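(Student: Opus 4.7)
The plan rests on the identity $(AP)_x = A_xP_x$ in $\calL(L_p(X,\mu))$, which connects the two flavors of limit operator: the weak-limit extension $B \mapsto B_x$ on $\calL(L_p(X,\mu))$ from \eqref{eq:weak_limit}, and the restriction $A_x = (AP)_x|_{\ran(P_x)}$ of Definition~\ref{def:limit_operator}. Indeed, for any $f \in L_p(X,\mu)$, Lemma~\ref{lem:range_of_limops}(iii) gives $A_xP_xf = (AP)_x(P_xf) = (AP)_xf$. Using this, (a) follows at once from Corollary~\ref{cor:BDO_limit_ops} applied to $AP \in \BDO^p$. For (b), the map $B \mapsto B_x$ on $\calL(L_p(X,\mu))$ is linear -- obviously for $x \in X$, and on $\beta X$ because weak limits respect linear combinations -- so applying it to $(A+B)P = AP+BP$ and restricting to $\ran(P_x)$ yields $(A+B)_x = A_x+B_x$. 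Assertion (e) will follow immediately from (b) and (d): $(A_n)_x - A_x = (A_n-A)_x$, hence $\|(A_n)_x - A_x\| \leq \|A_n - A\| \to 0$.

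For (c), first observe that $(AB)P = APBP$ as elements of $\calL(L_p(X,\mu))$, since $BPf \in M^p = \ran P$ implies $(AB)Pf = A(BPf) = AP(BPf) = APBPf$. The claim $(AB)_x = A_xB_x$ then reduces, via the key identity, to the multiplicativity $(APBP)_x = (AP)_x (BP)_x$ in $\calL(L_p(X,\mu))$, combined with the algebraic simplification $(A_xP_xB_xP_x)|_{\ran(P_x)} = A_xB_x$, which uses $P_xg = g$ for $g \in \ran(P_x)$ and $B_xg \in \ran(P_x)$ (Lemma~\ref{lem:range_of_limops}(iii)). Multiplicativity is trivial for $x \in X$. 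For $x \in \Gamma X$, pick a net $(x_\iota)$ in $X$ with $x_\iota \to x$; by Theorem~\ref{thm:limit_operator_existence}(iii), both $((AP)_{x_\iota})$ and $((BP)_{x_\iota})$ converge strongly, and since the product of two uniformly bounded strongly convergent nets converges strongly, $(AP)_{x_\iota}(BP)_{x_\iota}$ converges (strongly, hence weakly) to $(AP)_x(BP)_x$. On the other hand, $(AP)_{x_\iota}(BP)_{x_\iota} = U_{x_\iota}^p(APBP)(U_{x_\iota}^p)^{-1}$ converges weakly to $(APBP)_x$ by \eqref{eq:weak_limit}, and the two limits must therefore agree. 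The main subtlety throughout (c) is the bookkeeping between operators on all of $L_p(X,\mu)$ and their restrictions to the $x$-dependent subspaces $\ran(P_x)$.

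For (d), in the case $x \in X$, Assumption~\ref{ass:shifts} and Lemma~\ref{lem:surjective_isometry} give $\ran(P_x) = U_x^p(M^p)$, so for $g \in \ran(P_x)$ the element $(U_x^p)^{-1}g$ lies in $M^p$ and $A_xg = U_x^pA(U_x^p)^{-1}g$. Since $U_x^p$ and $(U_x^p)^{-1}$ are isometries, this yields $\|A_xg\|_p = \|A(U_x^p)^{-1}g\|_p \leq \|A\|\|g\|_p$. For $x \in \Gamma X$, pick a net $(x_\iota) \subseteq X$ with $x_\iota \to x$ and a unit vector $g \in \ran(P_x)$. Lower semicontinuity of the norm under weak convergence, the identity $(AP)_{x_\iota}g = A_{x_\iota}P_{x_\iota}g$, the bound $\|A_{x_\iota}\| \leq \|A\|$ from the already-settled $X$-case, and the strong convergence $P_{x_\iota}g \to P_xg = g$ (Theorem~\ref{thm:limit_operator_existence}(iii)) combine to give
\[\|A_xg\|_p = \|(AP)_xg\|_p \leq \liminf_\iota \|A_{x_\iota}P_{x_\iota}g\|_p \leq \|A\|\liminf_\iota \|P_{x_\iota}g\|_p = \|A\|.\]
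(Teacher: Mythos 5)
Your proposal is correct and takes essentially the same route as the paper: (a) via Corollary \ref{cor:BDO_limit_ops} applied to $AP$, (b), (c), (e) by the standard algebra of weak/strong limits of the uniformly bounded net $U_{x_\iota}^pAP(U_{x_\iota}^p)^{-1}$ together with the bookkeeping identities of Lemma \ref{lem:range_of_limops}, and (d) by comparing with interior points $x_\iota \in X$. The only cosmetic difference is in (d), where you invoke weak lower semicontinuity of the norm plus the bound $\norm{A_{x_\iota}} \leq \norm{A}$ for $x_\iota \in X$, whereas the paper runs the equivalent explicit triangle-inequality estimate using $\slim U_{x_\iota}^pP(U_{x_\iota}^p)^{-1}f = P_xf = f$.
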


\begin{proof}
  (a) follows from Corollary \ref{cor:BDO_limit_ops}. (b), (c) and (e) follow from standard properties of strong convergence and the fact that $\set{U_y^pAP(U_y^p)^{-1} :\; y \in X}$ is a bounded set. For (d) let $f \in \ran(P_x)$ and $(x_{\iota})$ a net in $X$ converging to $x$. Using that $U_{x_\iota}^p$ is an isometry, we get
  \begin{align*}
  \norm{A_xf}_p &\leq \norm{U_{x_\iota}^p AP(U_{x_\iota}^p)^{-1}f}_p + \norm{\left(A_x - U_{x_\iota}^p AP(U_{x_\iota}^p)^{-1}\right)f}_p\\
  &\leq \norm{A}\norm{U_{x_\iota}^p P(U_{x_\iota}^p)^{-1}f}_p + \norm{\bigl(A_x - U_{x_\iota}^p AP(U_{x_\iota}^p)^{-1}\bigr)f}_p\\
  &\leq \norm{A}\norm{\bigl(U_{x_\iota}^p P(U_{x_\iota}^p)^{-1} - P_x\bigr)f}_p + \norm{A}\norm{f}_p + \norm{\bigl(A_x - U_{x_\iota}^p AP(U_{x_\iota}^p)^{-1}\bigr)f}_p
  \end{align*}
  Taking the limit $x_{\iota} \to x$, we get $\norm{A_xf}_p \leq \norm{A}\norm{f}_p$.
\end{proof}

\subsection{Compact Operators}

In this subsection we finally show (I). One implication is straightforward to prove.

\begin{proposition}
\label{prop:limit_operators_of_compact_operators}
  Let $K\in\calK(L_p(X,\mu))$. Then $K_x = 0$ for all $x\in \Gamma X$. In particular, $K_x = 0$ for all $x\in \Gamma X$ and $K\in\calK(M^p)$. 
\end{proposition}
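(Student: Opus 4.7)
The plan is to approximate $K$ in operator norm by compactly localized truncations $M_{\1_{K_n}} K M_{\1_{K_n}}$, where $(K_n)$ is an exhaustion of $X$ by compacta, and then observe that conjugation by $U_{x_\iota}^p$ moves their supports out to infinity as $x_\iota \to x \in \Gamma X$. First, since $X$ is proper, every closed ball $B[x_0,R]$ is compact and hence closed in $\beta X$; a net $(x_\iota)$ in $X$ with $x_\iota \to x \in \Gamma X$ must eventually leave every $B[x_0,R]$, so $d(x_0, x_\iota) \to \infty$, and hence $\dist(K_0, x_\iota) \to \infty$ for any bounded set $K_0 \subseteq X$. Second, $M_{\1_{K_n}} \to I$ strongly on $L_p(X,\mu)$ and, by the same multiplication, on $L_q(X,\mu)$; combined with compactness of $K$, Lemma \ref{lem:strong_and_compact_convergence} yields $\|K - M_{\1_{K_n}} K M_{\1_{K_n}}\| \to 0$. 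Since each $U_{x_\iota}^p$ is an isometry by Lemma \ref{lem:surjective_isometry}, the approximation is uniform in $\iota$.

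Next, Lemma \ref{lem:multiplication_and_shifts} gives $U_x^p M_g (U_x^p)^{-1} = M_{g \circ \phi_x}$, and since $\phi_{x_\iota}$ is an isometry with $\phi_{x_\iota}(x_0) = x_\iota$, we obtain
\[U_{x_\iota}^p M_{\1_{K_0}} K M_{\1_{K_0}} (U_{x_\iota}^p)^{-1} = M_{\1_{\phi_{x_\iota}^{-1}(K_0)}}\, U_{x_\iota}^p K (U_{x_\iota}^p)^{-1}\, M_{\1_{\phi_{x_\iota}^{-1}(K_0)}},\]
and the isometry also gives $\dist(\phi_{x_\iota}^{-1}(K_0), x_0) = \dist(K_0, x_\iota) \to \infty$. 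For any $f \in L_p(X,\mu)$ and $g \in L_q(X,\mu)$ of compact support contained in a common compact set $K_1$, we eventually have $\phi_{x_\iota}^{-1}(K_0) \cap K_1 = \varnothing$, so both multiplications above annihilate $f$ and $g$, giving $\dupa{U_{x_\iota}^p M_{\1_{K_0}} K M_{\1_{K_0}} (U_{x_\iota}^p)^{-1} f}{g} = 0$ for large $\iota$. Using density of compactly supported functions (from the Radon property of $\mu$), the uniform norm approximation from the first step, and the bound $\|U_{x_\iota}^p K (U_{x_\iota}^p)^{-1}\| \leq \|K\|$ from Corollary \ref{cor:limit_operator_norm}, we conclude $\dupa{U_{x_\iota}^p K (U_{x_\iota}^p)^{-1} f}{g} \to 0$ for all $f, g$, i.e.\ $K_x = 0$.

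For the second assertion, any $K \in \calK(M^p)$ extends to $\tilde K := KP \in \calK(L_p(X,\mu))$, and by Definition \ref{def:limit_operator} we have $K_x = \wlim_{x_\iota \to x} U_{x_\iota}^p \tilde K (U_{x_\iota}^p)^{-1}|_{\ran(P_x)}$; applying the first part of the proposition to $\tilde K$ yields $\tilde K_x = 0$ on all of $L_p(X,\mu)$, hence $K_x = 0$. The only technical point that needs care is pinning down the correct conjugation formula so that the support of the compact localization becomes $\phi_{x_\iota}^{-1}(K_0)$ (whose distance to $x_0$ diverges); everything else is a routine combination of density of compactly supported functions and uniform norm approximation via the isometry of $U_{x_\iota}^p$.
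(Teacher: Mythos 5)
Your argument is correct and relies on the same essential ingredients as the paper's proof: Lemma \ref{lem:strong_and_compact_convergence} (compactness of $K$ against strongly convergent indicator multipliers), the conjugation identity of Lemma \ref{lem:multiplication_and_shifts} together with the isometries $U_{x_\iota}^p$, and the observation that a net converging to a point of $\Gamma X$ eventually leaves every ball, plus the reduction of the $\calK(M^p)$ case to $KP\in\calK(L_p(X,\mu))$. The paper just organizes this more directly, noting $\norm{U_{x_\iota}^p K(U_{x_\iota}^p)^{-1}M_{\1_{B[x_0,R]}}} = \norm{KM_{\1_{B[x_\iota,R]}}}\to 0$ because $M_{\1_{B[x_\iota,R]}}\to 0$ $*$-strongly, rather than first truncating $K$ by an exhaustion and then testing weakly against compactly supported functions, but this is only a reorganization of the same idea.
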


\begin{proof}
  Let $x_0 \in X$, $(x_\iota)$ a net in $X$ that converges to $x \in \Gamma X$ and $R > 0$. Then
  \[\norm{U_{x_\iota}^p K(U_{x_\iota}^p)^{-1}M_{\1_{B[x_0,R]}}} = \norm{KM_{\1_{B[x_{\iota},R]}}}\]
  by Lemma \ref{lem:multiplication_and_shifts}. 
$\bigl(M_{\1_{B[x_\iota,R]}}\bigr)_\iota$ converges $*$-strongly to $0$ as $x_{\iota} \to x$. Compactness of $K$ therefore implies
  \[\norm{KM_{\1_{B[x_\iota,R]}}} \to 0\]
  as $x_{\iota} \to x$. Hence 
	\[\lim\limits_{x_{\iota} \to x} \norm{U_{x_\iota}^p K(U_{x_\iota}^p)^{-1}M_{\1_{K'}}} = 0\]
	for all $K' \subseteq X$ compact. Thus, $K_x = 0$.
\end{proof}

The reverse implication is more difficult to show and requires the following notions. For $t>0$ let
$r_t:=\sup\limits_{j\in\N} \diam \spt \varphi_{j,t}$, which is finite by Lemma \ref{lem:phis} and Assumption \ref{ass:space}.

\begin{definition}
  For $A\in \calL\bigl(L_p(X,\mu)\bigr)$, $F\subseteq X$ a Borel set and $t>0$ we define
  \begin{align*}
    \tripnorm{A|_F}_t & := \sup\set{\norm{Af}_p:\; f\in L_p(X,\mu),\, \norm{f}_p=1,\, \spt f\subseteq B[x,r_t]\cap F\,\text{for some $x\in X$}},\\
    \norm{A|_F} & := \sup\set{\norm{Af}_p:\; f\in L_p(X,\mu),\, \norm{f}_p=1,\, \spt f\subseteq  F}.
  \end{align*}
\end{definition}

These definitions are similar to \cite[Definition 3.3]{HaggerLindnerSeidel2016} in case of $\ell_p$-spaces over $\Z^N$.
They yields localized norms of the operator. The following proposition is then a version of \cite[Corollary 3.5]{HaggerLindnerSeidel2016} to our case of $L_p$-spaces over metric measure spaces of bounded geometry.

\begin{proposition}
\label{prop:local_norms}
  Let $A\in \BDO^p$ and $\varepsilon>0$. Then there is a $t_0>0$ such that for all $t \leq t_0$, all Borel sets $F\subseteq X$ and all operators $B\in\set{A}\cup\set{A_x:\; x\in \beta X}$ we have
  \[\norm{B|_F} \geq \tripnorm{B|_F}_t \geq \norm{B|_F} - \varepsilon.\]
\end{proposition}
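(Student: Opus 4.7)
The first inequality $\norm{B|_F} \geq \tripnorm{B|_F}_t$ is immediate from the definitions, since $\tripnorm{B|_F}_t$ is a supremum over a subclass of the admissible test functions for $\norm{B|_F}$. So everything is about the reverse bound, and the strategy is to localise via the partition of unity $(\varphi_{j,t}^p)_j$ and then apply Lemma \ref{lem:sum_estimate} together with the uniform commutator control from Lemma \ref{lem:BDO_commutator_uniform}.

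The plan is as follows. Fix any $f \in L_p(X,\mu)$ with $\norm{f}_p = 1$ and $\spt f \subseteq F$, and decompose
\[
Bf \;=\; \sum_{j\in\N} M_{\varphi_{j,t}^{p-1}}M_{\varphi_{j,t}} Bf
\;=\; \sum_{j\in\N} M_{\varphi_{j,t}^{p-1}} B M_{\varphi_{j,t}} f \;+\; \sum_{j\in\N} M_{\varphi_{j,t}^{p-1}}\bigl[M_{\varphi_{j,t}},B\bigr]f,
\]
using $\sum_j \varphi_{j,t}^p = 1$. Apply Lemma \ref{lem:sum_estimate} with $s = p-1$ to both sums. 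For the commutator sum this gives
\[
\Bigl\|\sum_{j\in\N} M_{\varphi_{j,t}^{p-1}}\bigl[M_{\varphi_{j,t}},B\bigr]f\Bigr\|_p
\;\leq\;\Bigl(\sum_{j\in\N}\bigl\|\bigl[M_{\varphi_{j,t}},B\bigr]f\bigr\|_p^p\Bigr)^{1/p},
\]
which, by Lemma \ref{lem:BDO_commutator_uniform}, can be made smaller than any prescribed number \emph{uniformly} for all $B \in \{A\}\cup\{A_x : x\in\beta X\}$ by choosing $t_0$ small.

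For the main sum, set $g_j := M_{\varphi_{j,t}} f$ and $\alpha_j := \norm{g_j}_p$. Since $\spt f \subseteq F$ and $\spt \varphi_{j,t}$ has diameter at most $r_t$, we have $\spt g_j \subseteq B[x_j,r_t]\cap F$ for some $x_j \in X$; moreover $\sum_j \alpha_j^p = \int_X \sum_j \varphi_{j,t}^p\abs{f}^p\,d\mu = 1$ by the partition-of-unity property (and monotone convergence). For each $j$ with $\alpha_j \neq 0$, $g_j/\alpha_j$ is admissible in the definition of $\tripnorm{B|_F}_t$, so $\norm{Bg_j}_p \leq \tripnorm{B|_F}_t\,\alpha_j$, and hence
\[
\Bigl(\sum_{j\in\N} \norm{BM_{\varphi_{j,t}} f}_p^p\Bigr)^{1/p} \;\leq\; \tripnorm{B|_F}_t\Bigl(\sum_{j\in\N}\alpha_j^p\Bigr)^{1/p} \;=\; \tripnorm{B|_F}_t.
\]
Again by Lemma \ref{lem:sum_estimate}, the first sum in the decomposition of $Bf$ is bounded in $L_p$-norm by this quantity.

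Combining the two estimates yields $\norm{Bf}_p \leq \tripnorm{B|_F}_t + \varepsilon$ for $t \leq t_0$, uniformly in $B\in\{A\}\cup\{A_x: x\in\beta X\}$ and in $f$ with $\norm{f}_p=1$, $\spt f\subseteq F$. Taking the supremum over such $f$ delivers $\norm{B|_F}\leq \tripnorm{B|_F}_t + \varepsilon$, finishing the argument. The only nontrivial input is the uniform commutator bound from Lemma \ref{lem:BDO_commutator_uniform}; this is exactly what guarantees that a single choice of $t_0$ works simultaneously for $A$ and for all its limit operators $A_x$, which is the point of the statement.
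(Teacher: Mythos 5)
Your argument is correct, and it reaches the conclusion by a slightly different route than the paper. The paper picks a near-maximizer $f$ for $\norm{B|_F}$, writes $\norm{Bf}_p^p=\sum_j\norm{M_{\varphi_{j,t}}Bf}_p^p$, swaps $M_{\varphi_{j,t}}$ past $B$ at the cost of the commutator sum (controlled uniformly by Lemma \ref{lem:BDO_commutator_uniform}), and then concludes by a pigeonhole step: since $\sum_j\norm{BM_{\varphi_{j,t}}f}_p^p\geq(\norm{B|_F}-\varepsilon)^p\sum_j\norm{M_{\varphi_{j,t}}f}_p^p$, some single index $j$ already satisfies the termwise inequality, and that one localized piece witnesses $\tripnorm{B|_F}_t\geq\norm{B|_F}-\varepsilon$; Lemma \ref{lem:sum_estimate} is never needed. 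You instead run the estimate in the synthesis direction: bound every localized piece by the triple norm, and reassemble $Bf$ from the pieces via Lemma \ref{lem:sum_estimate} with $s=p-1$, obtaining $\norm{Bf}_p\leq\tripnorm{B|_F}_t+\varepsilon$ for \emph{every} unit $f$ supported in $F$, then take the supremum. Both proofs rest on the same two pillars (the partition $\sum_j\varphi_{j,t}^p=1$ and the uniform commutator bound of Lemma \ref{lem:BDO_commutator_uniform}, which is indeed exactly what makes a single $t_0$ work for $A$ and all $A_x$ simultaneously); your version trades the pigeonhole for the duality estimate of Lemma \ref{lem:sum_estimate}, which makes the argument look more like the proof of Proposition \ref{prop:BDO_commutator}(b)$\Rightarrow$(a) and has the mild aesthetic advantage of avoiding the choice of a near-maximizing $f$. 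One small point you should make explicit: splitting $\sum_j M_{\varphi_{j,t}^{p-1}}M_{\varphi_{j,t}}Bf$ into the two series requires knowing that each of them converges in $L_p$; this is harmless (the first is exactly the series treated in Lemma \ref{lem:series_strong_convergence} with $A_j=B$, and for the second the finiteness of $\sum_j\norm{[M_{\varphi_{j,t}},B]f}_p^p$ together with the tail version of Lemma \ref{lem:sum_estimate} gives Cauchyness), but as written it is silently assumed.
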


\begin{proof}
  Let $\varepsilon > 0$, $B\in \set{A}\cup\set{A_x:\; x\in \beta X}$ and $F\subseteq X$ a Borel set. Let $f\in L_p(X,\mu)$ with $\norm{f}_p = 1$ and $\spt f\subseteq F$, such that 
  \[\norm{Bf}_p \geq \norm{B|_F} - \frac{1}{2}\varepsilon.\]
  By Lemma \ref{lem:BDO_commutator_uniform} there is a $t_0>0$ (independent of $B$ and $f$) such that for all $t \leq t_0$:
  \[\Biggl(\sum_{j=1}^\infty \norm{[B, M_{\varphi_{j,t}}]f}_p^p\Biggr)^{1/p} \leq \frac{1}{2}\varepsilon.\]
  Since $\sum_{j=1}^\infty \varphi_{j,t}(x)^p = 1$ for all $x\in X$, we have
  \[\Biggl(\sum_{j=1}^\infty \norm{M_{\varphi_j,t} Bf}_p^p\Biggr)^{1/p} = \norm{Bf}_p \geq \norm{B|_F} - \frac{1}{2}\varepsilon.\]
  Minkowski's inequality yields
  \begin{align*}
    \Biggl(\sum_{j=1}^\infty \norm{BM_{\varphi_j,t} f}_p^p\Biggr)^{1/p} 
    & \geq \Biggl(\sum_{j=1}^\infty \norm{M_{\varphi_j,t} Bf}_p^p\Biggr)^{1/p} - \Biggl(\sum_{j=1}^\infty \norm{[B,M_{\varphi_{j,t}}]f}_p^p\Biggr)^{1/p} \\
    & \geq \norm{B|_F} - \varepsilon\\
    &= \bigl(\norm{B|_F} - \varepsilon\bigr) \Biggl(\sum_{j=1}^\infty \norm{M_{\varphi_j,t}f}_p^p\Biggr)^{1/p}.
  \end{align*}
  Thus, there exists $j\in \N$ such that
  \[\norm{BM_{\varphi_j,t} f}_p \geq \bigl(\norm{B|_F} - \varepsilon\bigr)\norm{M_{\varphi_j,t}f}_p.\]
  Since $\spt \left(M_{\varphi_{j,t}}f\right) \subseteq B[x,r_t]\cap F$ for some $x\in X$, we get
  \[\tripnorm{B|_F}_t \geq \norm{B|_F} - \varepsilon.\]
  The other inequality is clear by definition.
\end{proof}

\begin{theorem}
\label{thm:norm_equivalence}
  Let $A\in\calA^p$. Then
  \[\frac{1}{\norm{P}} \norm{A+\calK(M^p)} \leq \sup\limits_{x\in\bdry{X}} \norm{A_x} \leq \norm{A+\calK(M^p)}.\]
  In particular, $\norm{A+\calK(M^p)} = \sup\limits_{x\in\bdry{X}} \norm{A_x}$ if $\norm{P} = 1$.
\end{theorem}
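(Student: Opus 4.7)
The plan splits along the two inequalities. The upper bound $\sup_{x\in\bdry X} \norm{A_x} \leq \norm{A+\calK(M^p)}$ is essentially free: since $\calK(M^p) \subseteq \calA^p$ by Theorem \ref{thm:Ap_properties}, Proposition \ref{prop:limit_operators_of_compact_operators} gives $K_x = 0$ for every $K \in \calK(M^p)$ and every $x \in \bdry X$, and Proposition \ref{prop:limit_operator_properties}(b),(d) then yield $\norm{A_x} = \norm{(A+K)_x} \leq \norm{A+K}$; taking $\inf_K$ and then $\sup_x$ finishes this direction.

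For the nontrivial inequality $\norm{A+\calK(M^p)} \leq \norm{P}\sup_x \norm{A_x}$, set $B := AP \in \BDO^p$ and $c := \sup_{x\in\bdry X} \norm{A_x}$. The strategy is to exhaust $X$ by $K_n := B[x_0,n]$ and estimate $\norm{A+\calK(M^p)}$ by progressively localised norms. First, $(PM_{\1_{K_n}}P)|_{M^p}$ is compact on $M^p$ by Assumption \ref{ass:subspaces_and_projection}, so $A\cdot(PM_{\1_{K_n}}P)|_{M^p} \in \calK(M^p)$; a short computation using $Bf = Af$ on $M^p$ and $BP = B$ shows
\[(A - A(PM_{\1_{K_n}}P)|_{M^p})f = BM_{\1_{X\setminus K_n}}f \qquad (f \in M^p),\]
hence $\norm{A+\calK(M^p)} \leq \norm{BM_{\1_{X\setminus K_n}}}_{\calL(L_p)} = \norm{B|_{X\setminus K_n}}$. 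Given $\varepsilon > 0$, Proposition \ref{prop:local_norms} yields $t > 0$ with $\norm{B|_F} \leq \tripnorm{B|_F}_t + \varepsilon$ uniformly in $F$, so for each $n$ I pick $f_n \in L_p(X,\mu)$ with $\norm{f_n}_p = 1$, $\spt f_n \subseteq B[x_n, r_t] \cap (X \setminus K_n)$ for some $x_n \in X$, and $\norm{Bf_n}_p \geq \norm{B|_{X\setminus K_n}} - 2\varepsilon$. The triangle inequality forces $d(x_n, x_0) > n - r_t$, and compactness of $\beta X$ lets me pass to a subnet with $x_n \to x \in \bdry X$.

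The decisive step is comparing $\norm{Bf_n}_p$ to the limit operator $B_x = A_xP_x$. Setting $g_n := U_{x_n}^p f_n$, we have $\norm{g_n}_p = 1$ and $\spt g_n \subseteq B[x_0, r_t]$, which is compact since $X$ is proper; the intertwining $U_{x_n}^p B = B_{x_n} U_{x_n}^p$ (valid for $x_n \in X$) together with the fact that $U_{x_n}^p$ is an isometry yields $\norm{Bf_n}_p = \norm{B_{x_n} g_n}_p$. Theorem \ref{thm:limit_operator_existence}(ii) applied with $K = B[x_0, r_t]$ gives $\norm{(B_{x_n} - B_x)M_{\1_{B[x_0, r_t]}}} \to 0$ along the subnet, and since $g_n = M_{\1_{B[x_0, r_t]}} g_n$, this produces $\norm{B_{x_n} g_n}_p \leq \norm{B_x} + o(1) \leq \norm{A_x}\norm{P_x} + o(1) \leq c\norm{P} + o(1)$, where $\norm{P_x} \leq \norm{P}$ by Corollary \ref{cor:limit_operator_norm} applied to $P$. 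Combining everything gives $\norm{A+\calK(M^p)} \leq c\norm{P} + 2\varepsilon + o(1)$; sending $n$ along the subnet and then $\varepsilon \to 0$ closes the argument. The main obstacle is precisely this last step: a purely weak convergence of $g_n$ would not control $\norm{B_{x_n}g_n}_p$ from above by $\norm{B_x}$, so the norm-on-compacts convergence supplied by Lemma \ref{lem:compact_convergence} (ultimately rooted in the uniform commutator estimate of Lemma \ref{lem:BDO_commutator_uniform}) is indispensable, and it is what forces the band-dominated hypothesis on $B$.
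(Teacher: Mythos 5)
Your proof is correct and follows essentially the same route as the paper: the easy inequality via $K_x=0$ and Proposition \ref{prop:limit_operator_properties}, and the hard inequality by reducing $\norm{A+\calK(M^p)}$ to localized norms of $AP$ outside large balls (using compactness of $PM_{\1_K}$), localizing to balls of radius $r_t$ via Proposition \ref{prop:local_norms}, shifting back to $B[x_0,r_t]$ with $U_{x_n}^p$, and invoking the norm-on-compacts convergence of Theorem \ref{thm:limit_operator_existence}. The paper phrases this as a proof by contradiction with operator norms $\norm{APM_{\1_{B[x_s,r_t]}}}$ (and in fact gets the marginally sharper bound by $\sup_x\norm{A_xP_x}$), while you argue directly with test functions and estimate $\norm{A_xP_x}\leq\norm{A_x}\norm{P}$; these are only presentational differences.
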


\begin{proof}
  Let $x\in\bdry{X}$. For $K\in\calK(M^p)$ we obtain $K_x=0$ by Proposition \ref{prop:limit_operators_of_compact_operators}.
  Thus, by Proposition \ref{prop:limit_operator_properties} we observe
  \[\norm{A_x} = \norm{A_x+K_x} = \norm{(A+K)_x} \leq \norm{A+K}.\]
  Hence,
  \[\sup_{x\in\bdry{X}} \norm{A_x} \leq \inf_{K\in \calK(M^p)} \norm{A+K} = \norm{A+\calK(M^p)}.\]
  
  Concerning the first inequality, for $K\in\calK\bigl(L_p(X,\mu),M^p\bigr)$ we estimate
  \begin{align*}
    \norm{AP+K}_{\calL\bigl(L_p(X,\mu),M^p\bigr)} & = \sup_{\genfrac{}{}{0pt}{}{f\in L_p(X,\mu)}{\norm{f}_p = 1}} \norm{(AP+K)f}_p
    \geq \sup_{\genfrac{}{}{0pt}{}{f\in M^p}{\norm{f}_p = 1}} \norm{(AP+K)f}_p \\
    & = \sup_{\genfrac{}{}{0pt}{}{f\in M^p}{\norm{f}_p = 1}} \norm{(A+K)f}_p
    = \norm{A+K|_{M^p}}_{\calL(M^p)}.
  \end{align*}
  Thus,
  \[\norm{A+\calK(M^p)} \leq \inf_{K\in\calK\bigl(L_p(X,\mu),M^p\bigr)} \norm{AP+K}.\]
  It is therefore sufficient to show that
  \[\inf_{K\in\calK\bigl(L_p(X,\mu),M^p\bigr)} \norm{AP+K} \leq \sup_{x\in\bdry{X}} \norm{A_xP_x}.\]
  Assume by contradiction that there exists $\varepsilon>0$ such that
  \[\inf_{K\in\calK\bigl(L_p(X,\mu),M^p\bigr)} \norm{AP+K} > \sup_{x\in\bdry{X}} \norm{A_xP_x} + \varepsilon.\]
  Let $x_0\in X$ be as in Assumption \ref{ass:shifts}. Then, in particular, for all $s>0$ we have
  \[\norm{AP|_{X\setminus B[x_0,s]}} = \norm{APM_{\1_{X\setminus B[x_0,s]}}} = \norm{AP - APM_{\1_{B[x_0,s]}}} > \sup_{x\in\bdry{X}} \norm{A_xP_x} + \varepsilon,\]
  since $B[x_0,s]$ is compact by Assumption \ref{ass:space} and therefore $PM_{\1_{B[x_0,s]}} \in \calK\bigl(L_p(X,\mu),M^p\bigr)$ by Assumption \ref{ass:shifts}.
  By Proposition \ref{prop:local_norms}, there exists $t>0$ such that for all $s>0$ we have
  \[\tripnorm{AP|_{X\setminus B[x_0,s]}}_t \geq \norm{AP|_{X\setminus B[x_0,s]}} - \frac{1}{4}\varepsilon > \sup_{x\in\bdry{X}} \norm{A_xP_x} + \frac{3}{4}\varepsilon.\]
  Hence, for all $s>0$ there exists $x_s\in X$ such that
  \[\norm{APM_{\1_{B[x_s,r_t]}}} \geq \norm{APM_{\1_{B[x_s,r_t]\setminus B[x_0,s]}}} \geq \tripnorm{AP|_{X\setminus B[x_0,s]}}_t -\frac{1}{4}\varepsilon > \sup_{x\in\bdry{X}} \norm{A_xP_x} + \frac{1}{2}\varepsilon.\]
  Note that $(x_s)$ cannot be bounded as the second term above would be $0$ for sufficiently large $s$. Since $\beta X$ is compact there exists a subnet of $(x_s)$, again denoted by $(x_s)$, and $x\in \bdry{X}$ such that $x_s\to x$.
  By Lemma \ref{lem:surjective_isometry} and Lemma \ref{lem:multiplication_and_shifts}, we have
  \[\norm{U_{x_s}^p AP (U_{x_s}^p)^{-1} M_{\1_{B[x_0,r_t]}}} = \norm{A P M_{\1_{B[x_s,r_t]}}}
    > \sup_{x\in\bdry{X}} \norm{A_xP_x} + \frac{1}{2}\varepsilon.\]
  By Theorem \ref{thm:limit_operator_existence}, $\bigl(U_{x_s}^p AP (U_{x_s}^p)^{-1} M_{\1_{B[x_0,r_t]}}\bigr)$ converges to $A_xP_xM_{\1_{B[x_0,r_t]}}$. Thus,
  \[\norm{U_{x_s}^p AP (U_{x_s}^p)^{-1} M_{\1_{B[x_0,r_t]}}}\to \norm{A_xP_x M_{\1_{B[x_0,r_t]}}},\]
  and therefore
  \[\norm{A_x P_x M_{\1_{B[x_0,r_t]}}} \geq \sup_{x\in\bdry{X}} \norm{A_xP_x} + \frac{1}{2}\varepsilon,\]
  which is clearly a contradiction. 
\end{proof}

By means of Theorem \ref{thm:norm_equivalence} we can now show the converse of Proposition \ref{prop:limit_operators_of_compact_operators}.

\begin{corollary}
\label{cor:characterization_compact_operators_limit_operators}
  Let $K\in\calL(M^p)$. Then $K$ is compact if and only if $K \in \calA^p$ and $K_x=0$ for all $x\in\bdry{X}$.
\end{corollary}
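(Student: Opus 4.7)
The plan is to deduce both directions from results already established in this section, with no additional heavy lifting required.

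For the forward implication, assume $K \in \calK(M^p)$. Theorem \ref{thm:Ap_properties} already tells us that $\calK(M^p) \subseteq \calA^p$, so $K \in \calA^p$. To see that all limit operators vanish, I would invoke Proposition \ref{prop:limit_operators_of_compact_operators}: the proposition is stated for $K \in \calK(M^p)$ explicitly (its ``In particular'' clause), and gives $K_x = 0$ for every $x \in \bdry{X}$.

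For the reverse implication, suppose $K \in \calA^p$ with $K_x = 0$ for all $x \in \bdry{X}$. I would apply Theorem \ref{thm:norm_equivalence} to $A := K$. The theorem's first inequality reads
\[\frac{1}{\norm{P}} \norm{K + \calK(M^p)} \leq \sup_{x \in \bdry{X}} \norm{K_x},\]
and under our hypothesis the right-hand side is $0$. Since $\norm{P} < \infty$ (and non-zero unless $M^p = \{0\}$, which is a trivial case), this forces $\norm{K + \calK(M^p)} = 0$. Because $\calK(M^p)$ is a closed subspace of $\calL(M^p)$, this is equivalent to $K \in \calK(M^p)$.

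There is essentially no obstacle here: the substance of the argument has already been absorbed into Theorem \ref{thm:norm_equivalence}, which is the genuinely hard input (relying on Proposition \ref{prop:local_norms}, the propagation control of Lemma \ref{lem:BDO_commutator_uniform}, and the strong convergence of limit operators from Theorem \ref{thm:limit_operator_existence}). The only minor thing to double-check when writing the proof formally is that the statement ``$K \in \calA^p$ and all limit operators vanish'' really does entail $K \in \calK(M^p)$ rather than merely $K + \calK(M^p) = 0$ in the quotient, but these coincide by the closedness of the compact ideal.
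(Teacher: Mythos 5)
Your argument is correct and coincides with the paper's own proof: the forward direction via Theorem \ref{thm:Ap_properties} and Proposition \ref{prop:limit_operators_of_compact_operators}, and the reverse direction via the first inequality of Theorem \ref{thm:norm_equivalence} together with the closedness of $\calK(M^p)$. Nothing further is needed.
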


\begin{proof}
  If $K\in\calK(M^p)$, then Theorem \ref{thm:Ap_properties} implies $K \in \calA^p$ and Proposition \ref{prop:limit_operators_of_compact_operators} (or Theorem \ref{thm:norm_equivalence}) yields $K_x=0$ for all $x\in\bdry{X}$. On the other hand, if $K \in \calA^p$ and $K_x=0$ for all $x\in\bdry{X}$, then Theorem \ref{thm:norm_equivalence} yields $\norm{K+\calK(M^p)} = 0$, i.e.\ $K\in\calK(M^p)$.
\end{proof}

\subsection{Fredholm operators}

In this subsection we show (II) and (III). We start with a few preliminary results.
We start with a convergence lemma (including a norm bound) which originally (i.e.,\ for $\ell_p$-spaces over $\Z^N$) goes back to Simonenko \cite{Simonenko1968}, see also \cite[Proposition 2.2.2]{RabinovichRochSilbermann2004} and \cite[Lemma 3.14]{Lindner2006}.

\begin{lemma}
\label{lem:series_strong_convergence}
  Let $J \subseteq \N$ and assume that the set $\set{A_j : j\in J}\subseteq \calL\bigl(L_p(X,\mu)\bigr)$ is bounded. Then for every  $t>0$ the series $\sum\limits_{j\in J} M_{\varphi_{j,t}^{p/q}} A_j M_{\varphi_{j,t}}$ converges $*$-strongly and
  \[\Biggl\|\sum_{j\in J} M_{\varphi_{j,t}^{p/q}} A_j M_{\varphi_{j,t}}\Biggr\|_p \leq \sup_{j\in J}\norm{A_j}.\]
\end{lemma}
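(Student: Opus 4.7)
The plan is to establish the norm bound first on arbitrary finite partial sums (which, combined with convergence, gives the bound on the full series), and then upgrade convergence of partial sums by combining a density argument with the local finiteness property of the partition of unity. The $*$-strong part will follow by observing that the adjoint has exactly the same structural form once the roles of $p$ and $q$ are interchanged.

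First I would fix $t > 0$, a finite subset $J_0 \subseteq J$, and $f \in L_p(X,\mu)$, and apply Lemma \ref{lem:sum_estimate} with the critical choice $s = p/q = p-1$. Since then $s - p + 1 = 0$ and $\varphi_{j,t}^{0} = 1$, the lemma gives
\[
\Biggl\|\sum_{j \in J_0} M_{\varphi_{j,t}^{p/q}} A_j M_{\varphi_{j,t}} f\Biggr\|_p \leq \Biggl(\sum_{j \in J_0} \norm{A_j M_{\varphi_{j,t}} f}_p^p\Biggr)^{1/p} \leq \sup_{j\in J}\norm{A_j} \cdot \Biggl(\sum_{j \in J_0} \norm{M_{\varphi_{j,t}} f}_p^p\Biggr)^{1/p}.
\]
Now Lemma \ref{lem:phis}(i) gives $\sum_{j \in \N} \varphi_{j,t}^p = 1$, so monotone convergence yields $\sum_{j \in J} \norm{M_{\varphi_{j,t}} f}_p^p = \norm{f}_p^p$. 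This delivers the norm bound $\sup_{j \in J} \norm{A_j}$ uniformly in $J_0$.

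Next I would establish strong convergence. If $f \in L_p(X,\mu)$ has compact support, then by Lemma \ref{lem:phis}(iv) only finitely many $j \in J$ satisfy $M_{\varphi_{j,t}} f \neq 0$, so the series is actually a finite sum for such $f$, hence trivially convergent. Continuous compactly supported functions are dense in $L_p(X,\mu)$ (Radon property of $\mu$), and the uniform partial-sum bound from the previous step lets us extend the convergence to arbitrary $f$ by a standard three-epsilon argument. Taking the supremum over unit vectors then gives the claimed operator norm bound.

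For $*$-strong convergence, I would pass to the adjoint acting on $L_q(X,\mu)$: the adjoint partial sums read $\sum_{j \in J_0} M_{\varphi_{j,t}} A_j^* M_{\varphi_{j,t}^{p/q}}$. Setting $\psi_{j,t} := \varphi_{j,t}^{p/q} = \varrho_{j,t}^{1/q}$, one has $\psi_{j,t}^q = \varphi_{j,t}^p$, so $\sum_j \psi_{j,t}^q = 1$ and the family $(\psi_{j,t})$ satisfies the analogue of Lemma \ref{lem:phis} with $p$ replaced by $q$. Since $\psi_{j,t}^{q/p} = \varphi_{j,t}$, the adjoint series has precisely the form $\sum_j M_{\psi_{j,t}^{q/p}} A_j^* M_{\psi_{j,t}}$, i.e. the same structure as the original but with $p$ and $q$ swapped. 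Applying the strong-convergence argument just established to this adjoint series on $L_q(X,\mu)$ finishes the proof.

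The main subtlety is recognising that the exponent $p/q = p-1$ is chosen to sit exactly at the endpoint $s = p-1$ of Lemma \ref{lem:sum_estimate}, which is what produces the clean telescoping $\sum_j \norm{M_{\varphi_{j,t}}f}_p^p = \norm{f}_p^p$. The rest — extending from compactly supported $f$ to general $f$, and transferring to the adjoint via the $p \leftrightarrow q$ symmetry of the partition of unity — is routine once this identification is made.
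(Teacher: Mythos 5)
Your proposal is correct and follows essentially the same route as the paper: the heart of both arguments is applying Lemma \ref{lem:sum_estimate} with $s=p/q=p-1$ together with $\sum_{j}\varphi_{j,t}^p=1$ to get the uniform bound $\sup_{j}\norm{A_j}\,\norm{f}_p$, and handling the adjoints by the $p\leftrightarrow q$ symmetry of the family $\varrho_{j,t}^{1/p}$, $\varrho_{j,t}^{1/q}$. The only (harmless) difference is how convergence is concluded: the paper reads it off directly from the estimate, since the tails of $\sum_j\norm{M_{\varphi_{j,t}}f}_p^p=\norm{f}_p^p$ tend to zero and hence the partial sums are Cauchy, whereas you use local finiteness on compactly supported functions plus density and the uniform partial-sum bound; both are valid.
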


\begin{proof}
  Let $f \in L_p(X,\mu)$. Using Lemma \ref{lem:sum_estimate}, we obtain
  \[\Biggl\|\sum_{j\in J} M_{\varphi_{j,t}^{p/q}} A_j M_{\varphi_{j,t}}f\Biggr\|_p \leq \Biggl(\sum\limits_{j \in J} \norm{A_jM_{\varphi_{j,t}}f}_p^p\Biggr)^{1/p} \leq \sup\limits_{j \in J} \norm{A_j} \Biggl(\sum\limits_{j \in J} \norm{M_{\varphi_{j,t}}f}_p^p\Biggr)^{1/p} \leq \sup_{j\in J}\norm{A_j}\norm{f}_p.\]
  This implies the strong convergence and the norm estimate. The convergence of the adjoints is shown analogously.
\end{proof}

The next proposition is similar to \cite[Proposition 2.2.3]{RabinovichRochSilbermann2004} and \cite[Proposition 3.15]{Lindner2006}, and relies on Lemma \ref{lem:series_strong_convergence}.

\begin{proposition}
\label{prop:Fredholmness}
  Let $A\in\BDO^p$, $[A,P] = 0$, $c>0$ and $t_0 > 0$ such that for all $t \leq t_0$ there exists $j_0\in\N$ such that for all $j\geq j_0$ there exist $B_{j,t},C_{j,t}\in \calL\bigl(L_p(X,\mu)\bigr)$ with $\norm{B_{j,t}},\norm{C_{j,t}}\leq c$ and
  \[B_{j,t}AM_{\varphi_{j,t}} = M_{\varphi_{j,t}}, \quad M_{\varphi_{j,t}^{p/q}}AC_{j,t} = M_{\varphi_{j,t}^{p/q}}.\]
  Then $A|_{M^p}\in \calL(M^p)$ is Fredholm and $\norm{(A|_{M^p} + \calK(M^p))^{-1}} \leq 2\norm{P} c$.
\end{proposition}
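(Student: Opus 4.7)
The plan is to glue the local inverses $B_{j,t}$ and $C_{j,t}$ together against the partition of unity $(\varphi_{j,t}^p)_j$ to produce both a left and a right regularizer for $A|_{M^p}$ modulo compact operators. Choose $t \leq t_0$, let $j_0 = j_0(t)$ be as in the hypothesis, set $g_t := \sum_{j < j_0} \varphi_{j,t}^p$, and define
\[B_t := \sum_{j \geq j_0} M_{\varphi_{j,t}^{p/q}} B_{j,t} M_{\varphi_{j,t}}, \qquad C_t := \sum_{j \geq j_0} M_{\varphi_{j,t}^{p/q}} C_{j,t} M_{\varphi_{j,t}}.\]
Lemma \ref{lem:series_strong_convergence} makes both series $*$-strongly convergent with $\norm{B_t}, \norm{C_t} \leq c$. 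Commuting $A$ past the rightmost factor via $[A, M_{\varphi_{j,t}}]$, using the local left-inverse relation $B_{j,t} A M_{\varphi_{j,t}} = M_{\varphi_{j,t}}$, and noting that $p/q + 1 = p$ together with $\sum_j \varphi_{j,t}^p = 1$ yields
\[B_t A = I - M_{g_t} + E_t^L, \qquad E_t^L = -\sum_{j \geq j_0} M_{\varphi_{j,t}^{p/q}} B_{j,t} [A, M_{\varphi_{j,t}}].\]
Applying Lemma \ref{lem:sum_estimate} with $s = p-1$ (so $s - p + 1 = 0$) together with $\norm{B_{j,t}} \leq c$ gives $\norm{E_t^L f}_p \leq c\bigl(\sum_j \norm{[A, M_{\varphi_{j,t}}] f}_p^p\bigr)^{1/p}$, which tends to $0$ uniformly in $\norm{f}_p = 1$ as $t \to 0$ by Proposition \ref{prop:BDO_commutator}. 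An analogous computation, this time commuting $A$ past the \emph{leftmost} factor and using $M_{\varphi_{j,t}^{p/q}} A C_{j,t} = M_{\varphi_{j,t}^{p/q}}$, produces
\[A C_t = I - M_{g_t} + E_t^R, \qquad E_t^R = \sum_{j \geq j_0} [A, M_{\varphi_{j,t}^{p/q}}] C_{j,t} M_{\varphi_{j,t}}.\]

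The main obstacle is estimating $E_t^R$: because the commutator now sits on the \emph{left} rather than being absorbed into a multiplication factor $M_{\varphi^s}$ as required by Lemma \ref{lem:sum_estimate}, that lemma does not apply directly. I would handle this by dualization. Writing $\psi_{j,t} := \varphi_{j,t}^{p/q} = \varrho_{j,t}^{1/q}$, so that $(\psi_{j,t})_j$ plays the role of the partition of unity for the exponent $q$ (with $\sum \psi_{j,t}^q = 1$), the identity $\langle [A, M_{\psi_{j,t}}] h, g\rangle = -\langle h, [A^*, M_{\psi_{j,t}}] g\rangle$ combined with H\"older's inequality in both the integral and the sum gives, for unit-norm $f \in L_p$ and $g \in L_q$,
\[\abs{\langle E_t^R f, g\rangle} \leq c \Bigl(\sum_j \norm{M_{\varphi_{j,t}} f}_p^p\Bigr)^{1/p} \Bigl(\sum_j \norm{[A^*, M_{\psi_{j,t}}] g}_q^q\Bigr)^{1/q} = c \Bigl(\sum_j \norm{[A^*, M_{\psi_{j,t}}] g}_q^q\Bigr)^{1/q}.\]
Since $A^* \in \BDO^q$ by Theorem \ref{thm:BDO_algebraic}(e), the $L_q$-version of Proposition \ref{prop:BDO_commutator} applied to $A^*$ with the partition $(\psi_{j,t})_j$ makes the right-hand supremum tend to $0$ as $t \to 0$; hence $\norm{E_t^R} \to 0$ as well.

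Fix $t$ so small that $\norm{P}\norm{E_t^L}, \norm{P}\norm{E_t^R} \leq \tfrac{1}{2}$. The function $g_t$ has compact support $K$, and since multiplication operators commute, $P M_{g_t} = (P M_{\1_K}) M_{g_t}$ is compact by Assumption \ref{ass:subspaces_and_projection}. Using $[A, P] = 0$ (which makes $A$ preserve $M^p$ and yields $AP = PA$), applying $P$ on the left to $B_t A = I - M_{g_t} + E_t^L$ and $A C_t = I - M_{g_t} + E_t^R$, and restricting to $M^p$ (where $P$ acts as the identity) produces
\[(P B_t|_{M^p})(A|_{M^p}) = I_{M^p} - K_t + F_t^L, \qquad (A|_{M^p})(P C_t|_{M^p}) = I_{M^p} - K_t + F_t^R,\]
with $K_t := (P M_{g_t})|_{M^p}$ compact and $\norm{F_t^L}, \norm{F_t^R} \leq \tfrac{1}{2}$. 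Then $I_{M^p} + F_t^L$ and $I_{M^p} + F_t^R$ are invertible with inverses of norm at most $2$, so
\[R_L := (I_{M^p} + F_t^L)^{-1}(P B_t|_{M^p}), \qquad R_R := (P C_t|_{M^p})(I_{M^p} + F_t^R)^{-1}\]
are respectively a left and a right regularizer for $A|_{M^p}$ of operator norm at most $2\norm{P} c$. In the Calkin algebra $\calL(M^p)/\calK(M^p)$ their classes coincide and equal the inverse of $\pi(A|_{M^p})$, which proves Fredholmness and yields the bound $\norm{(A|_{M^p} + \calK(M^p))^{-1}} \leq 2\norm{P} c$.
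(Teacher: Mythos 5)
Your proof is correct and takes essentially the same route as the paper: the same glued operators $B_t$ and $C_t$ via Lemma \ref{lem:series_strong_convergence}, the same commutator estimates through Lemma \ref{lem:sum_estimate} and Proposition \ref{prop:BDO_commutator}, compactness of $PM_{\varrho_{j,t}}|_{M^p}$ from Assumption \ref{ass:subspaces_and_projection}, and a Neumann series argument producing left and right regularisers with the bound $2\norm{P}c$. The only cosmetic difference is that you estimate the error term $E_t^R$ by a direct duality computation against $g\in L_q$, whereas the paper obtains the same estimate by taking adjoints and applying the first half of the argument verbatim to $A^*\in\BDO^q$ on $L_q$ --- the same underlying idea.
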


\begin{proof}
  Let $t>0$. By Lemma \ref{lem:series_strong_convergence}, the series
  \[B_t:=\sum_{j\geq j_0} M_{\varphi_{j,t}^{p/q}} B_{j,t} M_{\varphi_{j,t}}=\sum_{j\geq j_0} M_{\varrho_{j,t}^{1/q}} B_{j,t} M_{\varrho_{j,t}^{1/p}}\]
  converges strongly with $\norm{B_t}\leq c$. Hence,
  \[B_tA = \sum_{j\geq j_0} M_{\varphi_{j,t}^{p/q}} B_{j,t} M_{\varphi_{j,t}} A = \sum_{j\geq j_0} M_{\varphi_{j,t}^{p/q}} B_{j,t} AM_{\varphi_{j,t}} + \sum_{j\geq j_0} M_{\varphi_{j,t}^{p/q}} B_{j,t} [M_{\varphi_{j,t}}, A].\]
  For the second term we have, using Lemma \ref{lem:sum_estimate},
  \begin{align*}
  \Biggl\|\sum_{j\geq j_0} M_{\varphi_{j,t}^{p/q}} B_{j,t} [M_{\varphi_{j,t}}, A]\Biggr\| &= \sup\limits_{\norm{f}_p = 1} \Biggl\|\sum_{j\geq j_0} M_{\varphi_{j,t}^{p/q}} B_{j,t} [M_{\varphi_{j,t}}, A]f\Biggr\|_p \leq \sup\limits_{\norm{f}_p = 1} \Biggl(\sum\limits_{j \geq j_0} \norm{B_{j,t}[M_{\varphi_{j,t}},A]f}_p^p\Biggr)^{1/p}\\ 
  &\leq c\sup\limits_{\norm{f}_p = 1} \Biggl(\sum\limits_{j \geq j_0} \norm{[M_{\varphi_{j,t}},A]f}_p^p\Biggr)^{1/p}
  \end{align*}
  which tends to $0$ as $t \to 0$ by Proposition \ref{prop:BDO_commutator}. For the first term, we obtain
  \[\sum_{j\geq j_0} M_{\varphi_{j,t}^{p/q}} B_{j,t} AM_{\varphi_{j,t}} = \sum_{j\geq j_0} M_{\varphi_{j,t}^{p/q}} M_{\varphi_{j,t}} = \sum_{j\geq j_0} M_{\varphi_{j,t}^p}.\]
  Hence,
  \[\Biggl\|B_t A - \sum_{j\geq j_0} M_{\varrho_{j,t}}\Biggr\| = \Biggl\|B_t A - \sum_{j\geq j_0} M_{\varphi_{j,t}^p}\Biggr\| \to 0\]
  as $t \to 0$. In particular,
  \[\Biggl\|P B_t A|_{M^p} - \sum_{j\geq j_0} P M_{\varrho_{j,t}}|_{M^p}\Biggr\| \leq \norm{P} \Biggl\|B_t A - \sum_{j\geq j_0} M_{\varrho_{j,t}}\Biggr\| \to 0\]
  as $t \to 0$. As the functions $\varrho_{j,t}$ have bounded support, the operators $PM_{\varrho_{j,t}}|_{M^p}$ are compact for all $j\in \N$ by Assumption \ref{ass:subspaces_and_projection}. Since $\sum\limits_{j = 1}^{\infty} \varrho_{j,t} = 1$ for all $t>0$, we have
  \[\sum_{j\geq j_0} P M_{\varrho_{j,t}}|_{M^p} = \sum_{j \in \N} P M_{\varrho_{j,t}}|_{M^p} - \sum_{j<j_0} P M_{\varrho_{j,t}}|_{M^p} = I_{M^p} - \sum_{j<j_0} P M_{\varrho_{j,t}}|_{M^p} \in I+\calK(M^p).\]
  Therefore, by a Neumann series argument, there exists a $B\in \calL(M^p)$ such that $BA|_{M^p} \in I+\calK(M^p)$ and
  \[\norm{B+\calK(M^p)} \leq 2\norm{P}\sup_{t>0} \norm{B_t} \leq 2 \norm{P} c.\]
  
  By Theorem \ref{thm:BDO_algebraic}, $A^* \in \BDO^q$ for $\frac{1}{p} + \frac{1}{q} = 1$. Therefore, noting that $\varphi_{j,t}^{p/q} = \varrho_{j,t}^{1/q}$, we may apply the above to $A^*$ to obtain an operator
  \[C_t^* := \sum_{j\geq j_0} M_{\varrho_{j,t}^{1/p}} C_{j,t}^* M_{\varrho_{j,t}^{1/q}} = \sum_{j\geq j_0} M_{\varphi_{j,t}} C_{j,t}^* M_{\varphi_{j,t}^{p/q}}\]
  with
  \[\Biggl\|AC_t - \sum_{j\geq j_0} M_{\varrho_{j,t}}\Biggr\| = \Biggl\|C_t^*A^* - \sum_{j\geq j_0} M_{\varrho_{j,t}}\Biggr\| \to 0.\]
  Hence
  \[\Biggl\|APC_t|_{M^p} - \sum_{j\geq j_0} P M_{\varrho_{j,t}}|_{M^p}\Biggr\| \leq \norm{P} \Biggl\|AC_t - \sum_{j\geq j_0} M_{\varrho_{j,t}}\Biggr\| \to 0,\]
  using $[A,P] = 0$. As above, this implies that there exists a $C\in \calL(M^p)$ such that $A|_{M^p}C \in I+\calK(M^p)$ and
  \[\norm{C+\calK(M^p)} \leq 2\norm{P}\sup_{t>0} \norm{C_t} \leq 2 \norm{P} c.\]
  Thus $A|_{M^p}$ is Fredholm and $\norm{(A|_{M^p} + \calK(M^p))^{-1}} \leq 2\norm{P} c$.
\end{proof}

Recall $Q = I-P$ and $\hat{A} = AP+Q$ for $A \in \calL(M^p)$.

\begin{proposition}
\label{prop:shifts}
  Let $A\in \calA^p$, $(x_\iota)$ a net in $X$ with $x_\iota\to x\in\bdry{X}$, $A_x$ invertible and $f\in L_{\infty,\rm c}(X,\mu)$. 
  Then there exists $\iota_0$ such that for all $\iota\geq \iota_0$ there exist operators
  $B_\iota,C_\iota\in \calL\bigl(L_p(X,\mu)\bigr)$ with $\norm{B_\iota},\norm{C_\iota}\leq 2(\norm{A_x^{-1}}\norm{P}+\norm{Q})$ and
  \[B_\iota \hat{A} M_{f\circ \phi_{x_\iota}^{-1}} = M_{f\circ \phi_{x_\iota}^{-1}} = M_{f\circ\phi_{x_\iota}^{-1}} \hat{A}C_\iota.\]
\end{proposition}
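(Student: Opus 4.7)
The plan is to conjugate by the isometry $U_{x_\iota}^p$ and reduce the problem to a construction on the limit-operator side, where Corollary~\ref{cor:hats} provides norm convergence of $\hat{A}_{x_\iota}$ on compact sets. Writing $B_\iota = (U_{x_\iota}^p)^{-1}\tilde{B}_\iota U_{x_\iota}^p$ and $C_\iota = (U_{x_\iota}^p)^{-1}\tilde{C}_\iota U_{x_\iota}^p$, using $\hat{A} = (U_{x_\iota}^p)^{-1}\hat{A}_{x_\iota} U_{x_\iota}^p$ and the intertwining $U_{x_\iota}^p M_{f\circ\phi_{x_\iota}^{-1}} = M_f U_{x_\iota}^p$ from Lemma~\ref{lem:multiplication_and_shifts}, the identities to prove become
\[\tilde{B}_\iota \hat{A}_{x_\iota} M_f = M_f \quad\text{and}\quad M_f \hat{A}_{x_\iota} \tilde{C}_\iota = M_f,\]
with $\norm{B_\iota} = \norm{\tilde{B}_\iota}$ and $\norm{C_\iota} = \norm{\tilde{C}_\iota}$ by Lemma~\ref{lem:surjective_isometry}.

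The invertibility hypothesis on $A_x$ lifts to an invertibility of $\hat{A}_x = A_x P_x + Q_x$ on all of $L_p(X,\mu)$. Indeed, by Lemma~\ref{lem:range_of_limops}, $P_x$ is a projection and $A_x P_x$ has range in $\ran(P_x)$, so that $\hat{A}_x^{-1} := A_x^{-1}P_x + Q_x$ is a genuine two-sided inverse (the cross terms $P_xQ_x$ and $Q_xA_xP_x$ vanish). Corollary~\ref{cor:limit_operator_norm} then yields
\[\norm{\hat{A}_x^{-1}} \le \norm{A_x^{-1}}\norm{P_x} + \norm{Q_x} \le \norm{A_x^{-1}}\norm{P} + \norm{Q}.\]

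For the main construction, set $K := \spt f$ and $E := \ran(M_{\1_K}) \subseteq L_p(X,\mu)$. Corollary~\ref{cor:hats}(ii) provides $(\hat{A}_{x_\iota}-\hat{A}_x)M_{\1_K} \to 0$ in norm, hence $S_\iota := M_{\1_K}\hat{A}_x^{-1}\hat{A}_{x_\iota} M_{\1_K} \to M_{\1_K}$ in norm. Choose $\iota_0$ such that $\norm{S_\iota - M_{\1_K}} < 1/2$ for all $\iota \ge \iota_0$; a Neumann series then inverts $S_\iota|_E$ on $E$ with $\norm{(S_\iota|_E)^{-1}} \le 2$. Setting $\tilde{B}_\iota := (S_\iota|_E)^{-1} M_{\1_K}\hat{A}_x^{-1}$ gives $\norm{\tilde{B}_\iota} \le 2(\norm{A_x^{-1}}\norm{P} + \norm{Q})$, and using $M_{\1_K}M_f = M_f$ together with the fact that $(S_\iota|_E)^{-1}S_\iota$ equals $M_{\1_K}$ on all of $L_p(X,\mu)$ (being $I_E$ on $E$ and $0$ on $\ker M_{\1_K}$),
\[\tilde{B}_\iota \hat{A}_{x_\iota} M_f = (S_\iota|_E)^{-1} M_{\1_K}\hat{A}_x^{-1}\hat{A}_{x_\iota} M_{\1_K} M_f = (S_\iota|_E)^{-1} S_\iota M_f = M_{\1_K} M_f = M_f.\]
The operator $\tilde{C}_\iota$ is built symmetrically: Corollary~\ref{cor:hats}(i) yields $T_\iota := M_{\1_K}\hat{A}_{x_\iota}\hat{A}_x^{-1} M_{\1_K} \to M_{\1_K}$ in norm, and $\tilde{C}_\iota := \hat{A}_x^{-1}(T_\iota|_E)^{-1} M_{\1_K}$ satisfies $M_f \hat{A}_{x_\iota} \tilde{C}_\iota = M_f$ after additionally invoking $M_f M_{\1_{K^c}} = 0$ to discard the off-diagonal contribution from $M_{\1_{K^c}}\hat{A}_{x_\iota}\hat{A}_x^{-1} M_{\1_K}$.

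The principal subtlety I anticipate is the bookkeeping around restriction and extension between $E$ and the ambient $L_p(X,\mu)$: verifying that the Neumann inverse on $E$, once composed with $M_{\1_K}$, yields the \emph{exact} global identity $(S_\iota|_E)^{-1} S_\iota = M_{\1_K}$ is what promotes the asymptotic invertibility of $\hat{A}_{x_\iota}$ near $x$ into the required regularizer-type equalities, rather than merely an approximate version.
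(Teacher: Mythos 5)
Your proposal is correct and follows essentially the same route as the paper: both exploit Corollary \ref{cor:hats} to get norm convergence of $\hat{A}_{x_\iota}$ towards $\hat{A}_x$ localized at $K=\spt f$, invert $\hat{A}_x$ explicitly as $A_x^{-1}P_x+Q_x$ with the bound $\norm{A_x^{-1}}\norm{P}+\norm{Q}$ from Corollary \ref{cor:limit_operator_norm}, and then run a Neumann series argument before transporting everything back with $U_{x_\iota}^p$ via Lemma \ref{lem:multiplication_and_shifts}. The only (harmless) cosmetic difference is that you compress to $E=\ran(M_{\1_K})$ and invert $S_\iota|_E$, $T_\iota|_E$ there, whereas the paper inverts $I+R_\iota$ on all of $L_p(X,\mu)$ with $R_\iota:=\hat{A}_x^{-1}\bigl(\hat{A}_{x_\iota}-\hat{A}_x\bigr)M_{\1_K}$ and conjugates at the end rather than at the beginning.
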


\begin{proof}
  Let $K\subseteq X$ be a compact set that contains $\spt f$. As $P \in \BDO^p$ by Assumption \ref{ass:subspaces_and_projection}, $Q$ is also in $\BDO^p$. By Corollary \ref{cor:hats}, we have
  \[\lim\limits_{x_{\iota} \to x} \norm{\Bigl(U_{x_\iota}^p (AP+Q)(U_{x_\iota}^p)^{-1} - (A_xP_x+Q_x)\Bigr)M_{\1_K}} = 0.\]
  Since $A_x$ is invertible, $A_xP_x+Q_x$ is also invertible with
  \[(A_xP_x+Q_x)^{-1} = A_x^{-1}P_x+Q_x.\]
  Hence, there exists $\iota_0$ such that
  \[R_\iota:=(A_xP_x+Q_x)^{-1}\Bigl(U_{x_\iota}^p (AP+Q)(U_{x_\iota}^p)^{-1} - (A_xP_x+Q_x)\Bigr)M_{\1_K}\]
  satisfies $\norm{R_\iota}\leq \frac{1}{2}$ for all $\iota\geq \iota_0$. In particular, $I+R_\iota\in \calL\bigl(L_p(X,\mu)\bigr)$ is invertible for all $\iota\geq\iota_0$.
  Using
  \[U_{x_\iota}^p (AP+Q)(U_{x_\iota}^p)^{-1} M_{\1_K} = (A_xP_x+Q_x)M_{\1_K} + (A_xP_x+Q_x)R_\iota,\]
  we get
  \[(A_xP_x+Q_x)^{-1}U_{x_\iota}^p (AP+Q)(U_{x_\iota}^p)^{-1} M_f = (I+R_\iota)M_f,\]
  and thus
  \[(I+R_\iota)^{-1} (A_xP_x+Q_x)^{-1}U_{x_\iota}^p (AP+Q)(U_{x_\iota}^p)^{-1} M_f = M_f.\]
  Applying $(U_{x_\iota}^p)^{-1}$ from the left and $U_{x_\iota}^p$ from the right, by Lemma \ref{lem:multiplication_and_shifts}, we obtain
  \[(U_{x_\iota}^p)^{-1}(I+R_\iota)^{-1} (A_xP_x+Q_x)^{-1}U_{x_\iota}^p (AP+Q) M_{f\circ\phi_{x_\iota}^{-1}} = M_{f\circ\phi_{x_\iota}^{-1}}.\]
  
  Similarly,
  \[\lim\limits_{x_{\iota} \to x} \norm{M_{\1_K}\Bigl(U_{x_\iota}^p (AP+Q)(U_{x_\iota}^p)^{-1} - (A_xP_x+Q_x)\Bigr)} = 0.\]
  Hence,
  \[M_{f\circ\phi_{x_\iota}^{-1}} (AP+Q) U_{x_\iota}^p (A_xP_x+Q_x)^{-1} (I+S_\iota)^{-1}(U_{x_\iota}^p)^{-1} = M_{f\circ\phi_{x_\iota}^{-1}}\]
  for sufficiently large $\iota$, where
  \[S_\iota:= M_{\1_{K}}\Bigl(U_{x_\iota}^p (AP+Q)(U_{x_\iota}^p)^{-1} - (A_xP_x+Q_x)\Bigr)(A_xP_x+Q_x)^{-1}. \qedhere\]
\end{proof}

\begin{theorem}
\label{thm:invertiblility_and_uniform_bound_implies_Fredholm}
  Let $A\in\calA^p$. If $A_x$ invertible for all $x\in\bdry{X}$ and $\sup\limits_{x\in\bdry{X}} \norm{A_x^{-1}}<\infty$, then $A$ is Fredholm.
\end{theorem}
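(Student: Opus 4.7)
My plan is to apply Proposition~\ref{prop:Fredholmness} to the auxiliary operator $\hat{A} := AP + Q$, which lives on all of $L_p(X,\mu)$ and whose restriction $\hat{A}|_{M^p}$ agrees with $A$; so Fredholmness of $\hat{A}|_{M^p}$ is exactly what is needed. The easy hypotheses of that proposition are verified directly: $\hat{A} \in \BDO^p$ because $AP \in \BDO^p$ (by the definition of $\calA^p$) and $Q = I - P \in \BDO^p$ (by Assumption~\ref{ass:subspaces_and_projection} and Theorem~\ref{thm:BDO_algebraic}); and $[\hat{A}, P] = 0$ since $PQ = QP = 0$ and $PAP = AP$ (the latter because $\ran(AP) \subseteq M^p$).

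The substantive content is to produce, for every sufficiently small $t$ and every $j$ beyond some $j_0 = j_0(t)$, operators $B_{j,t}$ and $C_{j,t}$ satisfying the identities required in Proposition~\ref{prop:Fredholmness} with a single uniform norm bound $c$. Setting $M := \sup_{x \in \bdry{X}} \norm{A_x^{-1}}$, the natural candidate is $c := 2(M\norm{P} + \norm{Q})$, matching the constant produced by Proposition~\ref{prop:shifts}. I would argue by contradiction: if no such $j_0$ exists for some $t$ (for the $B_{j,t}$ half; the $C_{j,t}$ half is symmetric, using Theorem~\ref{thm:BDO_algebraic}(e)), there is a sequence $j_k \to \infty$ along which no operator of norm $\leq c$ satisfies $B_{j_k,t}\hat{A}M_{\varphi_{j_k,t}} = M_{\varphi_{j_k,t}}$. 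Pick $y_k \in \spt \varphi_{j_k, t}$; by Lemma~\ref{lem:phis}(iv) the sequence $(y_k)$ eventually leaves every compact set, and by compactness of $\beta X$ some subnet $(y_{k_\alpha})$ converges to a point $x \in \bdry{X}$.

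Now I would mimic the proof of Proposition~\ref{prop:shifts} at this boundary point. At $x$ the operator $\hat{A}_x = A_xP_x + Q_x$ is invertible with inverse $A_x^{-1}P_x + Q_x$ of norm at most $M\norm{P} + \norm{Q}$. For the fixed compact set $K := B[x_0, r_t]$, Corollary~\ref{cor:hats} yields $\norm{\bigl(U_{y_{k_\alpha}}^p \hat{A}(U_{y_{k_\alpha}}^p)^{-1} - \hat{A}_x\bigr)M_{\1_K}} \to 0$, so a Neumann-series argument produces, for $\alpha$ large, an operator of norm $\leq 2(M\norm{P}+\norm{Q}) = c$ that is a left inverse of $U_{y_{k_\alpha}}^p\hat{A}(U_{y_{k_\alpha}}^p)^{-1}$ on $\ran M_{\1_K}$. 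Conjugating back by $U_{y_{k_\alpha}}^p$ and invoking Lemma~\ref{lem:multiplication_and_shifts} gives $B_{j_{k_\alpha},t}$ with $B_{j_{k_\alpha},t}\hat{A}M_{\1_{\phi_{y_{k_\alpha}}(K)}} = M_{\1_{\phi_{y_{k_\alpha}}(K)}}$. Since $y_{k_\alpha} \in \spt \varphi_{j_{k_\alpha},t}$ and $\diam \spt \varphi_{j,t} \leq r_t$, we have $\spt \varphi_{j_{k_\alpha},t} \subseteq \phi_{y_{k_\alpha}}(K) = B[y_{k_\alpha}, r_t]$, so right-multiplying by $M_{\varphi_{j_{k_\alpha},t}}$ produces the desired identity with norm still $\leq c$, contradicting the choice of the sequence.

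The main obstacle in carrying this out is securing the norm bound $c$ uniformly across all $j$ and $t$; this is exactly where the hypothesis $\sup_{x \in \bdry{X}} \norm{A_x^{-1}} < \infty$ is used, and compactness of $\beta X$ is what lets us transport this boundary bound back to an arbitrary large $j$ via the isometries $U_{y_k}^p$. A secondary but routine subtlety is that the $C_{j,t}$ construction needs the analogue of Corollary~\ref{cor:hats} on the other side; this is available both directly and via passing to the adjoint in $\BDO^q$ as in Theorem~\ref{thm:BDO_algebraic}(e).
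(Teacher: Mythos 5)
Your proposal is correct and follows essentially the same route as the paper: argue by contradiction via Proposition \ref{prop:Fredholmness} applied to $\hat{A}=AP+Q$ (with $[\hat{A},P]=0$), pick points in the supports of the offending $\varphi_{j,t}$, pass to a subnet converging to some $x\in\bdry{X}$, and use the invertibility of $\hat{A}_x$ together with Corollary \ref{cor:hats} and a Neumann-series step — i.e.\ exactly the content of Proposition \ref{prop:shifts} with the constant $2(\norm{A_x^{-1}}\norm{P}+\norm{Q})$ — to manufacture the forbidden one-sided inverses on $B[y_{k_\alpha},r_t]\supseteq\spt\varphi_{j_{k_\alpha},t}$. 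The only cosmetic difference is that the paper invokes Proposition \ref{prop:shifts} as a black box rather than re-running its proof, and handles the two one-sided conditions by a ``w.l.o.g.'' subsequence argument rather than by adjoints; both are equivalent.
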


\begin{proof}
  Assume that $A$ is not Fredholm. 
  Note that
  \[[\hat{A},P] = (AP+Q)P - P(AP+Q) = AP-PAP = 0.\]
  By Proposition \ref{prop:Fredholmness} there exists a $t>0$ and an increasing sequence $(j_m)_{m \in \N}$ in $\N$ such that
  \[B\hat{A}M_{\varphi_{j_m,t}} \neq M_{\varphi_{j_m,t}} \quad\text{ or }\quad M_{\varphi_{j_m,t}^{p/q}}\hat{A}B \neq M_{\varphi_{j_m,t}^{p/q}}\]
  for all $m\in \N$ and all $B\in \calL\bigl(L_p(X,\mu)\bigr)$ with $\norm{B}\leq 2\Bigl(\sup\limits_{x\in\bdry{X}} \norm{A_x^{-1}}\norm{P} + \norm{Q}\Bigr)$.
  W.l.o.g.\ (by only considering one of the two statements and choosing an approriate subsequence) we may assume that
  \[B\hat{A}M_{\varphi_{j_m,t}} \neq M_{\varphi_{j_m,t}} \quad (m\in\N).\]
  There exists $C>0$ such that $\diam\spt \varphi_{j,t}\leq C$ for all $j\in \N$ by Lemma \ref{lem:phis}. 
  Therefore there exist $(x_m)$ in $X$ and $R>0$ such that
  \[x_m\in \spt \varphi_{j_m,t}\subseteq B(x_m,R).\]
  As $(j_m)_{m \in \N}$ is increasing, Lemma \ref{lem:phis}(iv) implies $d(x,x_m) \to \infty$ for all $x \in X$. Since $\beta X$ is compact, there exists a subnet $(x_{m_\iota})_\iota$ of $(x_m)$ such that $x_{m_\iota}\to x$ for some $x\in \bdry{X}$. Let $x_0 \in X$ as in Assumption \ref{ass:shifts}, i.e.~$\phi_x(x_0) = x$ for all $x \in X$. By Proposition \ref{prop:shifts} there exists $\iota_0$ such that for all $\iota\geq \iota_0$ there exists an operator $B_\iota\in \calL\bigl(L_p(X,\mu)\bigr)$ such that $\norm{B_\iota}\leq 2(\norm{A_x^{-1}}\norm{P} + \norm{Q})$ and
  \[B_\iota \hat{A} M_{\1_{B(x_{m_\iota},R)}} = B_\iota \hat{A} M_{\1_{B(x_{0},R)}\circ\phi_{x_{m_\iota}}^{-1}} = M_{\1_{B(x_{0},R)}\circ\phi_{x_{m_\iota}}^{-1}} = M_{\1_{B(x_{m_\iota},R)}}.\]
  Multiplying by $M_{\varphi_{j_{m_\iota},t}}$ from the right yields
  \[B_\iota \hat{A} M_{\varphi_{j_{m_\iota},t}} = M_{\varphi_{j_{m_\iota},t}},\]
  for all $\iota\geq \iota_0$, a contradiction. 
\end{proof}

The converse of Theorem \ref{thm:invertiblility_and_uniform_bound_implies_Fredholm} is also true and easier to prove.

\begin{theorem}
\label{thm:Fredholm_implies_lim-ops_invertible}
  Let $A\in \calA^p$ be Fredholm and $x\in \Gamma X$. Then $A_x$ is invertible and $\norm{A_x^{-1}} \leq \norm{(A+\calK(M^p))^{-1}}$.
\end{theorem}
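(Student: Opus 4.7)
The plan is to take a Fredholm regulariser of $A$, pass it through the limit operator map at $x$, and observe that compact error terms disappear.

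First, since $A \in \calA^p$ is Fredholm, Theorem \ref{thm:Ap_properties} (inverse closedness w.r.t.\ Fredholm inverses) provides a regulariser $B \in \calA^p$ with $BA = I_{M^p}+K_1$ and $AB = I_{M^p}+K_2$ for some $K_1,K_2 \in \calK(M^p)$. Both $A$ and $B$ lie in $\calA^p$, so the multiplicativity of the limit operator map from Proposition \ref{prop:limit_operator_properties} applies and yields $(BA)_x = B_x A_x$ and $(AB)_x = A_x B_x$.

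Next, I identify the two sides of these identities. Note that $I_{M^p}P = P$, and by definition $(I_{M^p})_x = \wlim_{x_\iota\to x} U_{x_\iota}^p P (U_{x_\iota}^p)^{-1}|_{\ran(P_x)} = P_x|_{\ran(P_x)}$, which is exactly the identity operator on $\ran(P_x)$. On the other hand, by Proposition \ref{prop:limit_operators_of_compact_operators}, $(K_i)_x = 0$ for $i=1,2$, and Proposition \ref{prop:limit_operator_properties}(b) gives $(I_{M^p}+K_i)_x = I_{\ran(P_x)}$. Combining this with multiplicativity yields
\[
B_x A_x = I_{\ran(P_x)} = A_x B_x,
\]
so $A_x$ is invertible on $\ran(P_x)$ with $A_x^{-1} = B_x$.

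For the norm estimate, observe that $B+K$ is a regulariser for $A$ for every $K \in \calK(M^p)$, and the limit operator map is insensitive to compact perturbations: by Proposition \ref{prop:limit_operators_of_compact_operators} and Proposition \ref{prop:limit_operator_properties}(b), $(B+K)_x = B_x$. Combining this with the bound $\norm{(B+K)_x} \leq \norm{B+K}$ from Proposition \ref{prop:limit_operator_properties}(d), we get
\[
\norm{A_x^{-1}} = \norm{B_x} = \norm{(B+K)_x} \leq \norm{B+K}
\]
for every $K \in \calK(M^p)$. Taking the infimum over $K$ gives $\norm{A_x^{-1}} \leq \norm{B+\calK(M^p)} = \norm{(A+\calK(M^p))^{-1}}$.

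I do not expect a real obstacle: the whole argument is a routine exploitation of the algebra/norm properties of the limit operator map together with the vanishing of compacts at the boundary. The only point that needs a moment's care is the identification $(I_{M^p})_x = I_{\ran(P_x)}$, which follows directly from the definition of $A_x$ and the restriction to $\ran(P_x)$.
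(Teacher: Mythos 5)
Your proposal is correct and follows essentially the same route as the paper's proof: take a regulariser $B$, note $B\in\calA^p$ by Theorem \ref{thm:Ap_properties}, apply the multiplicativity/additivity of the limit operator map (Proposition \ref{prop:limit_operator_properties}) together with the vanishing of compact operators at the boundary (Proposition \ref{prop:limit_operators_of_compact_operators}) to get $A_xB_x = B_xA_x = I_{\ran(P_x)}$, and then use $\norm{B_x}\le\norm{B}$ over all regularisers to obtain the quotient-norm bound. Your explicit identification $(I_{M^p})_x = I_{\ran(P_x)}$ and the passage via $B+K$, $K\in\calK(M^p)$, are just slightly more detailed versions of the paper's ``as this is true for every regulariser'' step.
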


\begin{proof}
Let $B \in \calL(M^p)$ be a Fredholm regulariser of $A$, i.e.~$AB = I + K_1$ and $BA = I + K_2$ for two compact operators $K_1,K_2 \in \calK(M^p)$. By Theorem \ref{thm:Ap_properties}, $B\in \calA^p$. Using Proposition \ref{prop:limit_operator_properties} and Proposition \ref{prop:limit_operators_of_compact_operators}, we get
\[A_xB_x = (AB)_x = (I+K_1)_x = I\]
and similarly, $B_xA_x = I$. Hence, $A_x$ is invertible and $\norm{A_x^{-1}} = \norm{B_x} \leq \norm{B}$ by Proposition \ref{prop:limit_operator_properties} again. As this is true for every regulariser $B$, we get $\norm{A_x^{-1}} \leq \norm{(A+\calK(M^p))^{-1}}$.
\end{proof}

\subsection{Lower norm}

\begin{definition}
  Let $Y_1, Y_2$ be complex Banach spaces and $A\in\calL\bigl(Y_1,Y_2\bigr)$. Then
  \[\nu(A):=\inf\set{\norm{Ax}_{Y_2} :\; x\in Y_1,\, \norm{x} = 1}\]
  is called the \emph{lower norm} of $A$.
\end{definition}

\begin{lemma}[{see \cite[Lemma 2.35]{Lindner2006}}]
  Let $Y_1, Y_2$ be complex Banach spaces and $A\in\calL\bigl(Y_1,Y_2\bigr)$ an invertible operator. Then
  $\nu(A) = \norm{A^{-1}}^{-1}$.
\end{lemma}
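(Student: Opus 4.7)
The plan is to prove the two inequalities $\nu(A) \geq \norm{A^{-1}}^{-1}$ and $\nu(A) \leq \norm{A^{-1}}^{-1}$ separately, using nothing more than the bijection $A \from Y_1 \to Y_2$ to pass between unit vectors in $Y_1$ and vectors in $Y_2$.

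For the inequality $\nu(A) \geq \norm{A^{-1}}^{-1}$, I would take any $x \in Y_1$ with $\norm{x} = 1$ and write $x = A^{-1}Ax$, so that
\[1 = \norm{x} = \norm{A^{-1}Ax} \leq \norm{A^{-1}}\norm{Ax}.\]
Rearranging gives $\norm{Ax} \geq \norm{A^{-1}}^{-1}$ for every unit $x$, and taking the infimum over such $x$ yields $\nu(A) \geq \norm{A^{-1}}^{-1}$.

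For the reverse inequality, I would fix $\varepsilon > 0$ and choose $y \in Y_2$ with $\norm{y} = 1$ and $\norm{A^{-1}y} \geq \norm{A^{-1}} - \varepsilon$ (possible by the definition of the operator norm, provided $\norm{A^{-1}} > 0$; otherwise $A^{-1} = 0$ forces $Y_2 = \{0\}$ and the statement is trivial). Setting $x := A^{-1}y/\norm{A^{-1}y}$ gives a unit vector in $Y_1$ with
\[\norm{Ax} = \frac{\norm{y}}{\norm{A^{-1}y}} = \frac{1}{\norm{A^{-1}y}} \leq \frac{1}{\norm{A^{-1}} - \varepsilon}.\]
Thus $\nu(A) \leq (\norm{A^{-1}} - \varepsilon)^{-1}$, and letting $\varepsilon \to 0$ yields $\nu(A) \leq \norm{A^{-1}}^{-1}$, which combined with the first step gives equality.

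There is no genuine obstacle here: the only small subtlety is to handle the degenerate case $\norm{A^{-1}} = 0$ (which can only occur when $Y_1 = Y_2 = \{0\}$, so both $\nu(A)$ and $\norm{A^{-1}}^{-1}$ are to be understood as $+\infty$ by the usual conventions on infima over empty sets), but this is a triviality.
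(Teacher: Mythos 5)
Your proof is correct and is essentially the standard substitution argument (pass from unit vectors $x\in Y_1$ to $y=Ax\in Y_2$), which is exactly the argument behind the cited Lemma 2.35 of Lindner's book; the paper itself gives no proof and simply refers to that reference. The only cosmetic point is that in the second step you should take $\varepsilon<\norm{A^{-1}}$ so that $\norm{A^{-1}y}>0$, which you implicitly do.
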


For our specific setup of operators on $L_p$-spaces we will need two refined notions.
For $t>0$ let $r_t:=\sup\limits_{j\in\N} \diam \spt \varphi_{j,t}$ as above, which is finite by assumption.
\begin{definition}
  For $t>0$, $F\subseteq X$ a Borel set and $A\in \calL\bigl(L_p(X,\mu)\bigr)$ we define
  \begin{align*}
    \nu_t(A|_F) & := \inf\set{\norm{Af}_p:\; f\in L_p(X,\mu),\, \norm{f}_p=1,\, \spt f \subseteq B[x,r_t]\cap F\,\text{for some $x\in X$}},\\
    \nu(A|_F) & := \inf\set{\norm{Af}_p:\; f\in L_p(X,\mu),\, \norm{f}_p=1,\,\spt f\subseteq F}.
  \end{align*}
\end{definition}
Note that $\nu(A) = \nu(A|_X)$. 

\begin{lemma}
\label{lem:lower_norm_of_limit_operators}
  Let $A \in \calA^p$ and $x \in \beta X$. Then $\nu(A_x) \geq \nu(A)$.
\end{lemma}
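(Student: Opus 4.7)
The plan is to exploit strong convergence of the defining net together with the fact that each $U_{x_\iota}^p$ is an isometry. Fix $x\in \beta X$, a net $(x_\iota)$ in $X$ with $x_\iota\to x$, and take any $f\in \ran(P_x)$ with $\norm{f}_p=1$. By Theorem \ref{thm:limit_operator_existence}(iii), $(AP)_{x_\iota}f = U_{x_\iota}^p AP(U_{x_\iota}^p)^{-1}f\to A_xP_xf = A_xf$ in norm, so $\norm{A_xf}_p = \lim_\iota \norm{U_{x_\iota}^p AP(U_{x_\iota}^p)^{-1}f}_p$.

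Next, since $U_{x_\iota}^p$ is a surjective isometry (Lemma \ref{lem:surjective_isometry}), setting $g_\iota := P(U_{x_\iota}^p)^{-1}f \in M^p$ gives
\[\norm{U_{x_\iota}^p AP(U_{x_\iota}^p)^{-1}f}_p = \norm{AP(U_{x_\iota}^p)^{-1}f}_p = \norm{Ag_\iota}_p \geq \nu(A)\,\norm{g_\iota}_p,\]
by the definition of the lower norm on $M^p$. Using the isometry $(U_{x_\iota}^p)^{-1}$ once more,
\[\norm{g_\iota}_p = \norm{(U_{x_\iota}^p)^{-1}U_{x_\iota}^pP(U_{x_\iota}^p)^{-1}f}_p = \norm{P_{x_\iota}f}_p.\]

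Finally, Corollary \ref{cor:hats}(iii) (applied to $A = I$, so that $\hat{A}_y = P_y + Q_y = I$ converges strongly) — or equivalently Corollary \ref{cor:strong_convergence} applied to $B = P \in \BDO^p$ — yields $P_{x_\iota}\to P_x$ strongly, hence $P_{x_\iota}f\to P_xf = f$ and $\norm{P_{x_\iota}f}_p\to 1$. Combining the three displays,
\[\norm{A_xf}_p = \lim_\iota \norm{Ag_\iota}_p \geq \nu(A)\lim_\iota \norm{P_{x_\iota}f}_p = \nu(A).\]
Taking the infimum over admissible $f$ gives $\nu(A_x)\geq \nu(A)$.

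There is no real obstacle here; everything reduces to the strong-convergence statements already established in Theorem \ref{thm:limit_operator_existence} and the isometry property of $U_{x_\iota}^p$. The only subtlety worth double-checking is that $g_\iota$ indeed lives in $M^p$, which is automatic from $g_\iota = P(\cdot)$, so that the defining infimum for $\nu(A)$ applies to it.
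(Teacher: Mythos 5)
Your proof is correct and follows essentially the same route as the paper: both arguments rest on the convergences $U_{x_\iota}^pAP(U_{x_\iota}^p)^{-1}f\to A_xf$ and $U_{x_\iota}^pP(U_{x_\iota}^p)^{-1}f\to f$ (Theorem \ref{thm:limit_operator_existence}, Corollary \ref{cor:strong_convergence}), the isometry of $U_{x_\iota}^p$, and the lower-norm estimate applied to $P(U_{x_\iota}^p)^{-1}f\in M^p$; the paper merely phrases this with a near-minimizing $f$ and an $\varepsilon$-argument rather than passing to the limit along the net. The only blemish is your parenthetical appeal to Corollary \ref{cor:hats}(iii) with $A=I$, which is vacuous (it only asserts $I\to I$), but the alternative you give --- Corollary \ref{cor:strong_convergence} applied to $P\in\BDO^p$, whose hypothesis holds by Assumption \ref{ass:shifts} --- is exactly what the paper uses, so the argument stands.
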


\begin{proof}
  Let $(x_\iota)_\iota$ be a net in $X$ that converges to $x \in \beta X$ and $\varepsilon > 0$.
  Moreover, choose $f \in \ran(P_x)$ with $\norm{f}_p = 1$ such that $\norm{A_xf}_p < \nu(A_x) + \varepsilon$.
  By Theorem \ref{thm:limit_operator_existence} and Corollary \ref{cor:strong_convergence}, we can choose $\iota$ sufficiently large such that $\norm{\left(A_x - U_{x_\iota}^pAP(U_{x_\iota}^p)^{-1}\right)f}_p < \varepsilon$ and $\norm{U_{x_\iota}^pP(U_{x_\iota}^p)^{-1}f - f}_p < \varepsilon$. Using that $U_{x_\iota}^p$ is an isometry, we get
  \[\nu(A_x) > \norm{A_xf}_p - \varepsilon > \norm{U_{x_\iota}^pAP(U_{x_\iota}^p)^{-1}f}_p - 2\varepsilon \geq \nu(A)\norm{U_{x_\iota}^pP(U_{x_\iota}^p)^{-1}f}_p - 2\varepsilon > \nu(A)\norm{f}_p - 3\varepsilon.\]
  As $\varepsilon$ was arbitrary, $\nu(A_x) \geq \nu(A)$.
\end{proof}

\begin{lemma}
\label{lem:estimate_lower_norm}
  Let $A,B\in \calL\bigl(L_p(X,\mu)\bigr)$ and $F\subseteq X$ a Borel set. Then:
  \begin{enumerate}
    \item
      $\abs{\nu(A|_F) - \nu(B|_F)} \leq \norm{(A-B)M_{\1_F}}$.
    \item
      For $t>0$: $\abs{\nu_t(A|_F) - \nu_t(B|_F)} \leq \norm{(A-B)M_{\1_F}}$.
  \end{enumerate}
\end{lemma}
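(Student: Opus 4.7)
The plan is to prove both inequalities at once by a standard reverse-triangle-inequality argument on the defining infimum. Fix a Borel set $F \subseteq X$ and any admissible test function $f$, meaning $f \in L_p(X,\mu)$ with $\norm{f}_p = 1$ and either $\spt f \subseteq F$ (for part (a)) or $\spt f \subseteq B[x,r_t] \cap F$ for some $x \in X$ (for part (b)). In either case we have $M_{\1_F}f = f$, so
\[\norm{Af}_p \leq \norm{Bf}_p + \norm{(A-B)f}_p = \norm{Bf}_p + \norm{(A-B)M_{\1_F}f}_p \leq \norm{Bf}_p + \norm{(A-B)M_{\1_F}}.\]

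First I would take the infimum over all admissible $f$ on the right-hand side; since $\norm{(A-B)M_{\1_F}}$ does not depend on $f$, this yields $\norm{Af}_p \leq \nu(B|_F) + \norm{(A-B)M_{\1_F}}$ (respectively $\nu_t(B|_F) + \norm{(A-B)M_{\1_F}}$). Then I would take the infimum over admissible $f$ on the left-hand side, obtaining
\[\nu(A|_F) - \nu(B|_F) \leq \norm{(A-B)M_{\1_F}}, \qquad \nu_t(A|_F) - \nu_t(B|_F) \leq \norm{(A-B)M_{\1_F}}.\]

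The reverse inequalities follow by swapping the roles of $A$ and $B$, which gives the two absolute value estimates claimed in (a) and (b). There is no real obstacle here; the only thing to notice is that the class of admissible test functions is the same for both $A$ and $B$, so taking the two infima over the same set is legitimate, and that the crucial cancellation $M_{\1_F}f = f$ allows us to insert $M_{\1_F}$ and pull out the operator norm $\norm{(A-B)M_{\1_F}}$ rather than the larger $\norm{A-B}$.
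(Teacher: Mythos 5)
Your argument is in substance the same as the paper's: fix an admissible test function $f$ supported in $F$ (resp.\ in $B[x,r_t]\cap F$), use $M_{\1_F}f=f$ to insert the indicator and bound $\norm{(A-B)f}_p\le\norm{(A-B)M_{\1_F}}$, then conclude by symmetry in $A$ and $B$; the paper phrases exactly this via an $\varepsilon$-near-minimizer for $\nu(B|_F)$ rather than by manipulating infima directly. One bookkeeping caveat: the intermediate step ``take the infimum over $f$ on the right-hand side first'' is not valid as stated, because the left-hand side $\norm{Af}_p$ depends on the same $f$; the resulting claim $\norm{Af}_p\le\nu(B|_F)+\norm{(A-B)M_{\1_F}}$ for \emph{every} admissible $f$ is false in general (take $A=B$: it would force $\norm{Af}_p\le\nu(A|_F)$ for all admissible $f$). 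The repair is immediate and changes nothing essential: from $\norm{Af}_p\le\norm{Bf}_p+\norm{(A-B)M_{\1_F}}$ for all admissible $f$, first bound the left-hand side below by $\nu(A|_F)$ (equivalently, take the infimum of both sides simultaneously) and only then infimize over $f$ on the right, giving $\nu(A|_F)\le\nu(B|_F)+\norm{(A-B)M_{\1_F}}$; swapping $A$ and $B$ yields both (a) and (b).
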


\begin{proof}
  We only prove (a), the same proof also works for (b).
  Let $\varepsilon>0$. There exists $f\in L_p(X,\mu)$ with $\norm{f}_p = 1$, $\spt f\subseteq F$ and $\norm{Bf}_p\leq \nu(B|_F) + \varepsilon$.
  Hence,
  \[\nu(A|_F) - \nu(B|_F) - \varepsilon \leq \nu(A|_F) - \norm{Bf}_p \leq \norm{Af}_p - \norm{Bf}_p \leq \norm{(A-B)f}_p \leq \norm{(A-B)M_{\1_F}}.\]
  Similarly,
  \[\nu(B|_F) - \nu(A|_F) - \varepsilon \leq \norm{(A-B)M_{\1_F}}.\]
  Therefore, $\abs{\nu(A|_F) - \nu(B|_F)} \leq \norm{(A-B)M_{\1_F}}$.  
\end{proof}

The next lemma is the analogue of Proposition \ref{prop:local_norms} for the lower norm.

\begin{lemma}
\label{lem:lower_norm_limit_operator}
  Let $A\in \BDO^p$ and $\varepsilon>0$. Then there is a $t_0>0$ such that for all $t \leq t_0$, all Borel sets $F\subseteq X$ and all operators $B\in \{A\}\cup \set{A_x:\; x\in \beta X}$ we have
  \[\nu(B|_F) \leq \nu_t(B|_F) \leq \nu(B|_F) + \varepsilon.\]
\end{lemma}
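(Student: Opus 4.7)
The first inequality $\nu(B|_F) \leq \nu_t(B|_F)$ is immediate from the definitions, since the set of admissible test functions for $\nu_t(B|_F)$, namely unit vectors supported in $B[x,r_t]\cap F$ for some $x\in X$, is a subset of the test functions supported in $F$ used for $\nu(B|_F)$.

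For the second inequality, my plan is to mirror the proof of Proposition \ref{prop:local_norms} in the opposite direction. Applying Proposition \ref{prop:BDO_commutator} to $A$ and Lemma \ref{lem:BDO_commutator_uniform} to the family $\{A_x : x \in \beta X\}$, I fix $t_0 > 0$ so that for every $t \leq t_0$ and every $B \in \{A\}\cup\{A_x : x\in\beta X\}$ the uniform commutator bound
\[
\sup_{\norm{f}_p=1}\,\sum_{j=1}^{\infty}\norm{[B,M_{\varphi_{j,t}}]f}_p^p \leq (\varepsilon/3)^p
\]
holds. This uniformity across the whole family is the key ingredient supplied by Lemma \ref{lem:BDO_commutator_uniform}.

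Now fix such $t$, such $B$, and a Borel set $F\subseteq X$. Choose a near-minimiser $f\in L_p(X,\mu)$ with $\spt f\subseteq F$, $\norm{f}_p=1$, and $\norm{Bf}_p\leq \nu(B|_F)+\varepsilon/3$. The idea is to extract a localised witness of the form $g:=M_{\varphi_{j,t}}f/\norm{M_{\varphi_{j,t}}f}_p$ for a suitable $j$; because $\diam\spt\varphi_{j,t}\leq r_t$ (Lemma \ref{lem:phis}(ii)), any such $g$ is automatically supported in $\spt\varphi_{j,t}\cap F \subseteq B[x,r_t]\cap F$ for any $x\in\spt\varphi_{j,t}$, and therefore is admissible for $\nu_t(B|_F)$. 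Writing $BM_{\varphi_{j,t}}=M_{\varphi_{j,t}}B+[B,M_{\varphi_{j,t}}]$ and applying Minkowski's inequality for sums gives
\[
\Biggl(\sum_{j}\norm{BM_{\varphi_{j,t}}f}_p^p\Biggr)^{1/p}
\leq \Biggl(\sum_{j}\norm{M_{\varphi_{j,t}}Bf}_p^p\Biggr)^{1/p}
+ \Biggl(\sum_{j}\norm{[B,M_{\varphi_{j,t}}]f}_p^p\Biggr)^{1/p}.
\]
The partition identity $\sum_j\varphi_{j,t}^p=1$ collapses the first right-hand term to $\norm{Bf}_p\leq\nu(B|_F)+\varepsilon/3$, and the choice of $t_0$ bounds the second by $\varepsilon/3$, so the whole left-hand side is at most $\nu(B|_F)+2\varepsilon/3$.

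A pigeonhole argument then forces the existence of some $j\in\N$ with $M_{\varphi_{j,t}}f\neq 0$ and
\[
\norm{BM_{\varphi_{j,t}}f}_p \leq (\nu(B|_F)+\varepsilon)\,\norm{M_{\varphi_{j,t}}f}_p,
\]
for otherwise the strict reverse inequality, summed against $\sum_j\norm{M_{\varphi_{j,t}}f}_p^p=\norm{f}_p^p=1$, would give $\bigl(\sum_j\norm{BM_{\varphi_{j,t}}f}_p^p\bigr)^{1/p}>\nu(B|_F)+\varepsilon$, contradicting the previous line. Normalising $M_{\varphi_{j,t}}f$ produces a unit test function $g$ admissible for $\nu_t(B|_F)$ and satisfying $\norm{Bg}_p\leq\nu(B|_F)+\varepsilon$, which is exactly what is needed. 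No genuine new obstacle appears beyond organising the approximation; the critical step is securing the uniform commutator estimate over the family $\{A\}\cup\{A_x : x\in\beta X\}$, which is already available.
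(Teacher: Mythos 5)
Your proposal is correct and follows essentially the same route as the paper's proof: pick a near-minimiser $f$ supported in $F$, invoke the uniform commutator estimate of Lemma \ref{lem:BDO_commutator_uniform} to fix $t_0$ independently of $B$ and $f$, apply Minkowski together with $\sum_j \varphi_{j,t}^p = 1$, and extract a single index $j$ whose normalised localisation $M_{\varphi_{j,t}}f$ witnesses $\nu_t(B|_F) \leq \nu(B|_F)+\varepsilon$. The only differences are cosmetic (an $\varepsilon/3$ versus $\varepsilon/2$ split and making the pigeonhole step explicit).
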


\begin{proof}
  The first inequality is clear. For the second equality let $\varepsilon > 0$, $B\in \set{A}\cup\set{A_x:\; x\in \beta X}$ and $F\subseteq X$ a Borel set. Let $f\in L_p(X,\mu)$ with $\norm{f}_p = 1$ and $\spt f\subseteq F$, such that 
  \[\norm{Bf}_p \leq \nu(B|_F) + \frac{1}{2}\varepsilon.\]
  By Lemma \ref{lem:BDO_commutator_uniform} there is a $t_0>0$ (independent of $B$ and $f$) such that for all $t \leq t_0$:
  \[\Biggl(\sum_{j=1}^\infty \norm{[B, M_{\varphi_{j,t}}]f}_p^p\Biggr)^{1/p} \leq \frac{1}{2}\varepsilon.\]
  Since $\sum\limits_{j=1}^\infty \varphi_{j,t}(x)^p = 1$ for all $x\in X$, we have
  \[\Biggl(\sum_{j=1}^\infty \norm{M_{\varphi_j,t} Bf}_p^p\Biggr)^{1/p} = \norm{Bf}_p \leq \nu(B|_F) + \frac{1}{2}\varepsilon.\]
  Minkowski's inequality yields
  \begin{align*}
    \Biggl(\sum_{j=1}^\infty \norm{BM_{\varphi_j,t} f}_p^p\Biggr)^{1/p} 
    & \leq \Biggl(\sum_{j=1}^\infty \norm{M_{\varphi_j,t} Bf}_p^p\Biggr)^{1/p} + \Biggl(\sum_{j=1}^\infty \norm{[B,M_{\varphi_{j,t}}]f}_p^p\Biggr)^{1/p} \\
    & \leq \nu(B|_F) + \varepsilon\\
    &= \bigl(\nu(B|_F) + \varepsilon\bigr) \Biggl(\sum_{j=1}^\infty \norm{M_{\varphi_j,t}f}_p^p\Biggr)^{1/p}.
  \end{align*}
  Thus, there exists $j\in \N$ such that
  \[\norm{BM_{\varphi_j,t} f}_p \leq \bigl(\nu(B|_F) + \varepsilon\bigr)\norm{M_{\varphi_j,t}f}_p.\]
  Since $\spt \left(M_{\varphi_{j,t}}f\right) \subseteq B[x,r_t]\cap F$ for some $x\in X$, we get
  \[\nu_t(B|_F) \leq \nu(B|_F) + \varepsilon.\qedhere\]
\end{proof}

As above we use the notation $\hat{A}_x = A_xP_x+Q_x$ for $x \in \beta X$.

\begin{lemma}
\label{lem:lower_norm_localization}
  Let $A\in\calA^p$, $x \in \Gamma X$, $w \in X$ and $r > 0$. Then there exists $y \in \Gamma X$ such that
  \begin{equation} \label{eq:lower_norm_localization}
  \nu(\hat{A}_y|_{B[x_0,r]}) \leq \nu(\hat{A}_x|_{B[w,r]}) \quad \text{and} \quad \nu(\hat{A}_y|_{B[x_0,r+d(x_0,w)]}) \leq \nu(\hat{A}_x|_{B[x_0,r]}).
  \end{equation}
\end{lemma}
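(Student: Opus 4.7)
The plan is to exploit the fact that conjugation by the shift operators $U_z^p$ translates the localization set of lower norms by the isometry $\phi_z$. Concretely, for any $z \in X$ and Borel set $F \subseteq X$, the identity
\[\nu(\hat{A}_z|_F) = \nu(\hat{A}|_{\phi_z(F)})\]
holds. Indeed, $\hat{A}_z = U_z^p \hat{A} (U_z^p)^{-1}$ for $z \in X$, and the isometric bijection $(U_z^p)^{-1}$ sends $\set{f : \norm{f}_p = 1,\ \spt f \subseteq F}$ onto $\set{g : \norm{g}_p = 1,\ \spt g \subseteq \phi_z(F)}$ via $g = (U_z^p)^{-1}f$ (using that $\spt((U_z^p)^{-1}f) = \phi_z(\spt f)$). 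Since $\phi_z$ is an isometry with $\phi_z(x_0) = z$, we also have $\phi_z(B[u,r]) = B[\phi_z(u),r]$ for all $u\in X$ and $r>0$.

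The key step is the choice of $y$. Pick a net $(x_\iota)$ in $X$ with $x_\iota \to x$ in $\beta X$ and consider the companion net $\phi_{x_\iota}(w) \in X$. Since $\phi_{x_\iota}$ is an isometry, $d(\phi_{x_\iota}(w),x_\iota) = d(w,x_0)$, so
\[d(\phi_{x_\iota}(w),x_0) \geq d(x_\iota,x_0) - d(x_0,w) \to \infty\]
because $x \in \bdry{X}$ forces $(x_\iota)$ to leave every compact subset of $X$. Compactness of $\beta X$ lets us pass to a subnet along which $\phi_{x_\iota}(w) \to y$ for some $y \in \beta X$, and the divergence above forces $y \in \bdry{X}$.

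Along this subnet, the shift identity and the fact that $\phi_{x_\iota}(B[w,r]) = B[\phi_{x_\iota}(w),r]$ yield
\[\nu(\hat{A}_{x_\iota}|_{B[w,r]}) = \nu(\hat{A}|_{B[\phi_{x_\iota}(w),r]}) = \nu(\hat{A}_{\phi_{x_\iota}(w)}|_{B[x_0,r]}).\]
For the second inequality, the triangle inequality gives $B[x_\iota,r] \subseteq B[\phi_{x_\iota}(w),r+d(x_0,w)]$, and monotonicity of $\nu(\hat{A}|_{\,\cdot\,})$ under shrinking the support set yields
\[\nu(\hat{A}_{x_\iota}|_{B[x_0,r]}) = \nu(\hat{A}|_{B[x_\iota,r]}) \geq \nu(\hat{A}|_{B[\phi_{x_\iota}(w),r+d(x_0,w)]}) = \nu(\hat{A}_{\phi_{x_\iota}(w)}|_{B[x_0,r+d(x_0,w)]}).\]

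It remains to pass to the limit. Corollary \ref{cor:hats}(ii) gives $\hat{A}_{z_\iota}M_{\1_K} \to \hat{A}_z M_{\1_K}$ in operator norm whenever $z_\iota \to z$ in $\beta X$ and $K \subseteq X$ is compact; combined with Lemma \ref{lem:estimate_lower_norm}, this yields $\nu(\hat{A}_{z_\iota}|_K) \to \nu(\hat{A}_z|_K)$. Applying this both to $x_\iota \to x$ and to $\phi_{x_\iota}(w) \to y$ along the subnet, the left-hand sides of the two displays above converge to $\nu(\hat{A}_x|_{B[w,r]})$ and $\nu(\hat{A}_x|_{B[x_0,r]})$, while the right-hand sides converge to $\nu(\hat{A}_y|_{B[x_0,r]})$ and $\nu(\hat{A}_y|_{B[x_0,r+d(x_0,w)]})$, producing \eqref{eq:lower_norm_localization} (with equality in the first relation, as a bonus). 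The only real insight is choosing $y$ as an accumulation point of $(\phi_{x_\iota}(w))$ rather than of $(x_\iota)$ itself; everything else is bookkeeping over the shift identity and the norm-convergence of localized limit operators.
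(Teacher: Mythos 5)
Your proof is correct and follows essentially the same route as the paper: you choose $y$ as a $\beta X$-accumulation point of the net $(\phi_{x_\iota}(w))$, transport supports with the shifts $U_z^p$ (using that $\phi_z$ maps balls to balls and $\phi_z(x_0)=z$), and pass to the limit via Corollary \ref{cor:hats} combined with Lemma \ref{lem:estimate_lower_norm}. Your use of the exact identity $\nu(\hat{A}_z|_F) = \nu(\hat{A}|_{\phi_z(F)})$ for $z \in X$ merely streamlines the paper's $\varepsilon/3$-bookkeeping with near-minimizers and makes its second compactness step (the subnet extraction from the points $y_n$ obtained for $\varepsilon = \tfrac{1}{n}$) unnecessary, and as a by-product gives equality in the first relation.
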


\begin{proof}
  We divide the proof into two steps. In step (i) we show that \eqref{eq:lower_norm_localization} holds up to an error of $\varepsilon$. With a compactness argument we take $\varepsilon \to 0$ in step (ii).

  (i) Let $\varepsilon > 0$. Choose a net $(x_{\iota})$ in $X$ that converges to $x$. As $\beta X$ is compact, we may assume that the net $(\phi_{x_{\iota}}(w))$ also converges, say to some $y \in \beta X$. As $X$ is proper, we must in fact have $y \in \Gamma X$. Indeed, otherwise $x_{\iota} \in B[y,2d(w,x_0)]$ for sufficiently large $\iota$ by
  \[d(\phi_{x_{\iota}}(w),x_{\iota}) = d(\phi_{x_{\iota}}(w),\phi_{x_{\iota}}(x_0)) = d(w,x_0).\]
  This would imply $x \in B[y,2d(w,x_0)] \subseteq X$, which contradicts $x\in \Gamma X$. Choose $f\in L_p(X,\mu)$ with $\norm{f}_p = 1$, $\spt f\subseteq B[w,r]$ and $\|\hat{A}_xf\| \leq \nu(\hat{A}_x|_{B[w,r]}) + \frac{\varepsilon}{3}$. Now choose $\iota$ sufficiently large such that
  \[\norm{\left(U_{x_{\iota}}^p\hat{A}(U_{x_{\iota}}^p)^{-1} - \hat{A}_x\right)f}_p \leq \frac{\varepsilon}{3} \quad \text{and} \quad \norm{\left(U_{\phi_{x_{\iota}}(w)}^p\hat{A}(U_{\phi_{x_{\iota}}(w)}^p)^{-1} - \hat{A}_y\right)M_{\1_{B[x_0,r]}}} \leq \frac{\varepsilon}{3}.\]
  Note that this is possible by Corollary \ref{cor:hats}. Set $g := U_{\phi_{x_{\iota}}(w)}^ p(U_{x_{\iota}}^p)^{-1}f$. As $U_{\phi_{x_{\iota}}(w)}^p$ and $(U_{x_{\iota}}^p)^{-1}$ are isometries, we have $\norm{g}_p = \norm{f}_p = 1$. Moreover,
  \[\spt g \subseteq \spt(f \circ \phi_{x_{\iota}}^{-1} \circ \phi_{\phi_{x_{\iota}}(w)}) \subseteq \left(\phi_{\phi_{x_{\iota}}(w)}^{-1} \circ \phi_{x_{\iota}}\right)(B[w,r]) = B[x_0,r]\]
  and
  \begin{align*}
  \|\hat{A}_yg\|_p &\leq \norm{\left(\hat{A}_y - U_{\phi_{x_{\iota}}(w)}^p\hat{A}(U_{\phi_{x_{\iota}}(w)}^p)^{-1}\right)g}_p + \norm{U_{\phi_{x_{\iota}}(w)}^p\hat{A}(U_{\phi_{x_{\iota}}(w)}^p)^{-1}g}_p\\
  &\leq \norm{\left(\hat{A}_y - U_{\phi_{x_{\iota}}(w)}^p\hat{A}(U_{\phi_{x_{\iota}}(w)}^p)^{-1}\right)M_{\1_{B[x_0,r]}}} + \norm{U_{x_{\iota}}^p\hat{A}(U_{x_{\iota}}^p)^{-1}f}_p\\
  &\leq \frac{2\varepsilon}{3} + \|\hat{A}_x f\|_p\\
  &\leq \varepsilon + \nu(\hat{A}_x|_{B[w,r]}),
  \end{align*}
  i.e.~$\nu(\hat{A}_y|_{B[x_0,r]}) \leq \nu(\hat{A}_x|_{B[w,r]}) + \varepsilon$. 
  
  Now let $f\in L_p(X,\mu)$ with $\norm{f}_p = 1$, $\spt f\subseteq B[x_0,r]$ and $\|\hat{A}_xf\| \leq \nu(\hat{A}_x|_{B[x_0,r]}) + \frac{\varepsilon}{3}$. Again, we choose $g := U_{\phi_{x_{\iota}}(w)}^ p(U_{x_{\iota}}^p)^{-1}f$, where $\iota$ is sufficiently large such that
  \[\norm{\left(U_{x_{\iota}}^p\hat{A}(U_{x_{\iota}}^p)^{-1} - \hat{A}_x\right)f}_p \leq \frac{\varepsilon}{3} \quad \text{and} \quad \norm{\left(U_{\phi_{x_{\iota}}(w)}^p\hat{A}(U_{\phi_{x_{\iota}}(w)}^p)^{-1} - \hat{A}_y\right)M_{\1_{B[x_0,r+d(x_0,w)]}}} \leq \frac{\varepsilon}{3}.\]
  Since $d(\phi_{\phi_{x_{\iota}}(w)}^{-1}(\phi_{x_{\iota}}(x_0)),x_0) = d(\phi_{x_{\iota}}(x_0),\phi_{\phi_{x_{\iota}}(w)}(x_0)) = d(x_0,w)$, we get
  \[\spt g \subseteq \left(\phi_{\phi_{x_{\iota}}(w)}^{-1} \circ \phi_{x_{\iota}}\right)(B[x_0,r]) \subseteq B[x_0,r+d(x_0,w)].\]
  Moreover, we have $\|\hat{A}_yg\|_p \leq \varepsilon + \nu(\hat{A}_x|_{B[x_0,r]})$ by the same calculation as above. Therefore we obtain $\nu(\hat{A}_y|_{B[x_0,r+d(x_0,w)]}) \leq \nu(\hat{A}_x|_{B[x_0,r]}) + \varepsilon$.
  
  (ii) For $n \in \N$ set $\varepsilon := \frac{1}{n}$ in (i). This yields a sequence $(y_n)_{n \in \N}$ in $\Gamma X$ with
  \begin{equation} \label{eq:lower_norm_localization2}
  \nu(\hat{A}_{y_n}|_{B[x_0,r]}) \leq \nu(\hat{A}_x|_{B[w,r]}) + \frac{1}{n} \quad \text{and} \quad \nu(\hat{A}_{y_n}|_{B[x_0,r+d(x_0,w)]}) \leq \nu(\hat{A}_x|_{B[x_0,r]}) + \frac{1}{n}.
  \end{equation}
  As $\Gamma X$ is compact, $(y_n)_{n \in \N}$ has a subnet $(y_{n_{\iota}})$ that converges to some $y \in \Gamma X$. Using Lemma \ref{lem:estimate_lower_norm} and Corollary \ref{cor:hats}, we get
  \begin{align*}
  \abs{\nu(\hat{A}_y|_{B[x_0,r]}) - \nu(\hat{A}_{y_{n_{\iota}}}|_{B[x_0,r]})} \leq \norm{\left(\hat{A}_y - \hat{A}_{y_{n_{\iota}}}\right)M_{\1_{B[x_0,r]}}} \to 0.
  \end{align*}
  Taking the limit in \eqref{eq:lower_norm_localization2} thus yields the result.
\end{proof}

\begin{proposition}
\label{prop:min_of_nu}
  Let $A\in\calA^p$. Then there exists $x\in \Gamma X$ such that
  \[\nu(\hat{A}_x) = \inf \set{\nu(\hat{A}_z):\; z\in \Gamma X}.\]
\end{proposition}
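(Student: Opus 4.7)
Set $\alpha := \inf\set{\nu(\hat{A}_z) : z \in \Gamma X}$. The strategy is to reduce the problem to the localized lower norms $\nu(\hat{A}_z|_{B[x_0,R]})$, which are continuous in $z$ by Lemma \ref{lem:estimate_lower_norm} and Corollary \ref{cor:hats}, obtain minimizers on each finite scale by compactness of $\Gamma X$, and then pass to a limit as $R \to \infty$.

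For each $R > 0$, set $\alpha_R := \inf_{z \in \Gamma X}\nu(\hat{A}_z|_{B[x_0,R]})$. Since $z \mapsto \nu(\hat{A}_z|_{B[x_0,R]})$ is continuous on the compact space $\Gamma X$, $\alpha_R$ is attained at some $y_R \in \Gamma X$. I next claim $\alpha_R \to \alpha$ as $R \to \infty$. For every $\varepsilon > 0$, Lemma \ref{lem:lower_norm_limit_operator} yields a $t_0$ with $\inf_{w \in X}\nu(\hat{A}_z|_{B[w,r_{t_0}]}) \leq \nu(\hat{A}_z) + \varepsilon$ uniformly in $z \in \beta X$, while Lemma \ref{lem:lower_norm_localization} allows replacing the center $w$ by $x_0$ at the cost of shifting $z$ to some $z' \in \Gamma X$. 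Combined these give $\alpha_{r_{t_0}} \leq \alpha + \varepsilon$, which together with monotonicity of $R \mapsto \alpha_R$ and the trivial $\alpha \leq \alpha_R$ yields the claim. Since $\nu(\hat{A}_{y_R}) \leq \nu(\hat{A}_{y_R}|_{B[x_0,R]}) = \alpha_R$, the minimizers satisfy $\nu(\hat{A}_{y_R}) \to \alpha$.

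By compactness of $\Gamma X$, extract a subnet $y_{R_\iota} \to y^* \in \Gamma X$. The trivial bound $\nu(\hat{A}_{y^*}) \geq \alpha$ is immediate, so the content lies in the reverse inequality $\nu(\hat{A}_{y^*}) \leq \alpha$. For this I would choose optimizers $f_R \in L_p(X,\mu)$ with $\norm{f_R}_p = 1$, $\spt f_R \subseteq B[x_0,R]$, and $\norm{\hat{A}_{y_R} f_R}_p = \alpha_R$ (attained by reflexivity of the unit ball and weak lower semi-continuity of the norm). Using Lemma \ref{lem:lower_norm_limit_operator} together with the shift in Lemma \ref{lem:lower_norm_localization}, each $f_R$ can be arranged to have support in a single \emph{fixed} ball $B[x_0,r_{t_0}]$ after updating $y_R$, at the cost of an additive error $\varepsilon$. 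Banach--Alaoglu on $L_p(B[x_0,r_{t_0}],\mu)$ yields a subnet $f_{R_\iota} \rightharpoonup f^*$, and the norm convergence $\hat{A}_{y_{R_\iota}} M_{\1_{B[x_0,r_{t_0}]}} \to \hat{A}_{y^*} M_{\1_{B[x_0,r_{t_0}]}}$ from Corollary \ref{cor:hats}(i) combined with the weak convergence of $f_{R_\iota}$ gives $\hat{A}_{y_{R_\iota}} f_{R_\iota} \rightharpoonup \hat{A}_{y^*} f^*$; weak lower semi-continuity of the norm then produces $\norm{\hat{A}_{y^*} f^*}_p \leq \alpha + \varepsilon$.

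The main obstacle is that $z \mapsto \nu(\hat{A}_z)$ is only upper semi-continuous under the $*$-strong convergence of Corollary \ref{cor:hats}, so compactness of $\Gamma X$ alone is insufficient; the localized lower norms and the shift invariance from Assumption \ref{ass:shifts} are essential to transport the minimizing data into a common compact region. The hardest technical point is controlling the norm $\norm{f^*}_p$ of the weak limit, since weak convergence in $L_p$ on a bounded set can still lose mass through oscillations. Keeping $\norm{f^*}_p$ close to $1$ would give $\nu(\hat{A}_{y^*}) \leq \norm{\hat{A}_{y^*}f^*}_p / \norm{f^*}_p \leq (\alpha+\varepsilon)/\norm{f^*}_p$ and hence $\nu(\hat{A}_{y^*}) = \alpha$ upon letting $\varepsilon \to 0$, and this will likely require a concentration-compactness or diagonal-selection step exploiting the shifts $U_x^p$ to re-localize $f_R$ whenever it threatens to vanish weakly.
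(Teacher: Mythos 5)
Your first half is sound: the continuity of $z \mapsto \nu(\hat{A}_z|_{B[x_0,R]})$ on $\Gamma X$ (via Lemma \ref{lem:estimate_lower_norm} and Corollary \ref{cor:hats}), the attainment of $\alpha_R$, and the convergence $\alpha_R \to \alpha$ obtained from Lemma \ref{lem:lower_norm_limit_operator} together with the first inequality of Lemma \ref{lem:lower_norm_localization} are all correct, and you correctly identify that the whole difficulty is the missing lower semicontinuity of $z \mapsto \nu(\hat{A}_z)$. But the step that is supposed to close the argument has a genuine gap, in fact two. First, the optimizer $f_R$ need not exist: the lower norm is in general not attained, and the same defect kills the weak-limit step, because the weak limit $f^*$ of (near-)optimizers supported in the \emph{fixed} ball $B[x_0,r_{t_0}]$ can perfectly well be $0$ by oscillation inside that ball; re-localizing with the shifts $U_x^p$ cannot repair this, since no mass escapes to infinity. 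Second, and more fundamentally, even if you could keep $\norm{f^*}_p$ close to $1$, the construction only produces, for each \emph{fixed} $\varepsilon$, a point $y^*_\varepsilon$ with $\nu(\hat{A}_{y^*_\varepsilon}) \leq \alpha + C\varepsilon$ — a near-minimizer, which exists trivially by definition of the infimum. Passing to $\varepsilon \to 0$ requires transferring a localized estimate at scale $r_{t_0(\varepsilon)} \to \infty$ to yet another limit point, and there you hit exactly the semicontinuity obstruction again; ``concentration-compactness'' is a name for the problem here, not a solution.

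What is missing is a single boundary point whose localized lower norms are close to $\alpha$ at \emph{all} scales simultaneously, and this is precisely what the paper's proof manufactures. It fixes a scale hierarchy $r_{t_{k+1}} > 2r_{t_k}$ with $\nu_{t_k} \leq \nu + 2^{-(k+1)}$ uniformly over the limit operators (Lemma \ref{lem:lower_norm_limit_operator}), and then, starting from a minimizing sequence $(x_n)$, applies Lemma \ref{lem:lower_norm_localization} $n$ times in a telescoping fashion, shrinking the radius from $r_{t_n}$ down to $r_{t_0}$ and producing points $y_n^0,\dots,y_n^n$. The decisive ingredient, which your proposal never uses, is the \emph{second} inequality of Lemma \ref{lem:lower_norm_localization}: it allows one to go back up the chain and conclude that the single final point $y_n := y_n^n$ satisfies $\nu(\hat{A}_{y_n}|_{B[x_0,4r_{t_l}]}) \leq \nu(\hat{A}_{x_n}) + 2^{-(n-l)}$-type bounds for \emph{every} $l \leq n$ (the factor $4$ coming from $\sum 2r_{t_k} \leq 4 r_{t_l}$). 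For a subnet $y_{n_\iota} \to y$, each fixed-scale localized lower norm converges by Corollary \ref{cor:hats} and Lemma \ref{lem:estimate_lower_norm}, so $\nu(\hat{A}_y) \leq \nu(\hat{A}_y|_{B[x_0,4r_{t_l}]}) \leq \alpha + 2^{-l}$ for every $l$, and letting $l \to \infty$ gives attainment — with no weak limits of test functions at all: only numerical localized estimates are passed to the limit. Without this simultaneous multi-scale control (or some substitute for it), your argument does not reach the conclusion.
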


Proposition \ref{prop:min_of_nu} has its roots in \cite[Theorem 8]{LindnerSeidel2014}, where the corresponding statement was shown for suitable operators on $\ell_p$-spaces over $\Z^N$.

\begin{proof}
  By Lemma \ref{lem:lower_norm_limit_operator} we obtain a sequence $(t_k)_{k\in\N}$ with $r_{t_{k+1}}>2r_{t_k}$ and
  \begin{equation} \label{eq:min_of_nu}
  \nu_{t_k}(B|_F) < \nu(B|_F) + 2^{-(k+1)}
  \end{equation}
  for all $k\in\N$, all Borel sets $F\subseteq X$ and all $B\in\set{\hat{A}_x:\; x\in\bdry{X}}$.
  Choose a sequence $(x_n)_{n \in \N}$ in $\bdry{X}$ with
  \[\lim_{n\to\infty} \nu(\hat{A}_{x_n}) = \inf \set{\nu(\hat{A}_y):\; y\in \Gamma X}.\]
  By Lemma \ref{lem:lower_norm_localization} and \eqref{eq:min_of_nu} for $F = X$, for every $n \in \N$ we can find $w_n^0 \in X$ and $y_n^0 \in \Gamma X$ such that
  \[\nu(\hat{A}_{y_n^0}|_{B[x_0,r_{t_n}]}) \leq \nu(\hat{A}_x|_{B[w_n^0,r_{t_n}]}) \leq \nu(\hat{A}_x) + 2^{-(n+1)}.\]
  Next, using Lemma \ref{lem:lower_norm_localization} and \eqref{eq:min_of_nu} again with $F = B[x_0,r_{t_n}]$, we find $w_n^1 \in B[x_0,r_{t_n}+r_{t_{n-1}}]$ and $y_n^1 \in \Gamma X$ such that
  \begin{align*}
  \nu(\hat{A}_{y_n^1}|_{B[x_0,r_{t_{n-1}}]}) &\leq \nu(\hat{A}_{y_n^0}|_{B[w_n^1,r_{t_{n-1}}]}) \leq \nu(\hat{A}_{y_n^0}|_{B[w_n^1,r_{t_{n-1}}] \cap B[x_0,r_{t_n}]}) \leq \nu(\hat{A}_{y_n^0}|_{B[x_0,r_{t_n}]}) + 2^{-n}\\
  &\leq \nu(\hat{A}_{x_n}) + 2^{-n+1}.
  \end{align*}
  Repeating this argument, we can find $w_n^k \in B[x_0,r_{t_{n-k+1}}+r_{t_{n-k}}]$ and $y_n^k \in \Gamma X$ such that
  \begin{align*}
    \nu(\hat{A}_{y_n^k}|_{B[x_0,r_{t_{n-k}}]}) &\leq \nu(\hat{A}_{y_n^{k-1}}|_{B[w_n^k,r_{t_{n-k}}]}) \leq \nu(\hat{A}_{y_n^{k-1}}|_{B[w_n^k,r_{t_{n-k}}] \cap B[x_0,r_{t_{n-k+1}}]})\\
    &\leq \nu(\hat{A}_{y_n^{k-1}}|_{B[x_0,r_{t_{n-k+1}}]}) + 2^{-(n-k+1)} \leq \ldots \leq \nu(\hat{A}_{x_n}) + 2^{-(n-k)}
  \end{align*}
  for all $n \in \N$ and $k \in \set{1, \ldots, n}$.
  
  Furthermore, by repeatedly applying the second part of Lemma \ref{lem:lower_norm_localization}, for $n\in\N$ and $l\in\set{1,\ldots,n}$ we obtain
  \begin{align*}
    \nu(\hat{A}_{y_{n}^{n-l}}|_{B[x_0,r_{t_l}]}) &\geq \nu(\hat{A}_{y_{n}^{n-l+1}}|_{B[x_0,r_{t_l}+r_{t_l} + r_{t_{l-1}}]}) = \nu(\hat{A}_{y_{n}^{n-l+1}}|_{B[x_0,2r_{t_l} + r_{t_{l-1}}]})\\
    &\geq \nu(\hat{A}_{y_{n}^{n-l+2}}|_{B[x_0,2r_{t_l}+r_{t_{l-1}} + r_{t_{l-1}} + r_{t_{l-2}}]}) = \nu(\hat{A}_{y_{n}^{n-l+2}}|_{B[x_0,2r_{t_l}+2r_{t_{l-1}} + r_{t_{l-2}}]}) \\
    &\geq \ldots \geq \nu(\hat{A}_{y_{n}^{n}}|_{B[x_0,2r_{t_l}+2r_{t_{l-1}} + \ldots + 2r_{t_{1}} + r_{t_{0}}]}) \geq \nu(\hat{A}_{y_{n}^{n}}|_{B[x_0,4r_{t_l}]}),
  \end{align*}
  where we have used that $r_{t_{k+1}} > 2r_{t_k}$ for all $k\in\N$.
  
  Now, let $y_n:=y_n^n$ for all $n\in\N$. As $\Gamma X$ is compact, $(y_n)_{n \in \N}$ has a convergent subnet denoted by $(y_{n_{\iota}})$. Let us denote the limit of this subnet by $y$. Corollary \ref{cor:hats} implies $\norm{(\hat{A}_{y_{n_\iota}} - \hat{A}_y) M_{\1_{B[x_0,4r_{t_l}]}}} \to 0$ as $y_{n_{\iota}} \to y$. By Lemma \ref{lem:estimate_lower_norm}, this also implies $\nu(\hat{A}_{y_{n_\iota}}|_{B[x_0,4r_{t_l}]}) \to \nu(\hat{A}_{y}|_{B[x_0,4r_{t_l}]})$. Thus,
  \begin{align*}
    \nu(\hat{A}_y) & \leq \nu(\hat{A}_{y}|_{B[x_0,4r_{t_l}]}) = \lim_{\iota} \nu(\hat{A}_{y_{n_\iota}}|_{B[x_0,4r_{t_l}]}) \leq \lim_{\iota} \nu(\hat{A}_{y_{n_\iota}^{n_\iota-l}}|_{B[x_0,r_{t_l}]})\\
    & \leq \lim_{\iota} \nu(\hat{A}_{x_{n_\iota}}) + 2^{-l} = \inf \set{\nu(\hat{A}_y):\; y\in \Gamma X} + 2^{-l}.
  \end{align*}
  Taking $l\to\infty$, we obtain $\nu(\hat{A}_y) = \inf \set{\nu(\hat{A}_y):\; y\in \Gamma X}$.  
\end{proof}

\begin{theorem}
\label{thm:Fredholm_equivalence}
  Let $A\in\calA^p$. The following are equivalent:
  \begin{enumerate}
    \item
      $A$ is Fredholm.
    \item
      For all $x\in\Gamma X$: $A_x$ is invertible, and $\sup_{x\in\Gamma X} \norm{A_x^{-1}}<\infty$.
    \item
      For all $x\in\Gamma X$: $A_x$ is invertible.
    \item
      For all $x\in\Gamma X$: $\hat{A}_x$ is invertible.
  \end{enumerate}
  Moreover, if any of the above is true, then $\norm{A_x^{-1}}\leq \norm{(A+\calK(M^p))^{-1}}$.
\end{theorem}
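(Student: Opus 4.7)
The plan is to close the circle $(a)\Rightarrow(b)\Rightarrow(a)$ using the two theorems already proved, dispose of $(c)\Leftrightarrow(d)$ by an algebraic block-decomposition argument, and then use the minimizer provided by Proposition \ref{prop:min_of_nu} to upgrade the pointwise invertibility in $(c)$ to the uniform bound in $(b)$.

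First, $(a)\Rightarrow(b)$ is precisely Theorem \ref{thm:Fredholm_implies_lim-ops_invertible}, which in addition yields the final norm bound $\|A_x^{-1}\|\le\|(A+\calK(M^p))^{-1}\|$. The converse $(b)\Rightarrow(a)$ is Theorem \ref{thm:invertiblility_and_uniform_bound_implies_Fredholm}. The implication $(b)\Rightarrow(c)$ is trivial, so once I establish $(c)\Rightarrow(b)$ below, (a), (b), (c) will be equivalent.

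Next I would deal with $(c)\Leftrightarrow(d)$. By Lemma \ref{lem:range_of_limops}(i), $P_x$ is a projection, and by Lemma \ref{lem:range_of_limops}(iii), $A_x=A_xP_x=P_xA_x$, so $L_p(X,\mu)=\ran(P_x)\oplus\ran(Q_x)$ is invariant under $\hat A_x=A_xP_x+Q_x$, which acts as $A_x$ on $\ran(P_x)$ and as the identity on $\ran(Q_x)$. Hence $\hat A_x$ is invertible on $L_p(X,\mu)$ if and only if $A_x$ is invertible on $\ran(P_x)$, and in this case $\hat A_x^{-1}=A_x^{-1}P_x+Q_x$. In particular $\|A_x^{-1}\|\le\|\hat A_x^{-1}\|$, which will be important below.

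The main step is $(c)\Rightarrow(b)$, and this is where I expect the difficulty to lie. Assume all $A_x$ ($x\in\Gamma X$) are invertible; equivalently, all $\hat A_x$ are invertible, so $\nu(\hat A_x)=\|\hat A_x^{-1}\|^{-1}>0$ for each $x\in\Gamma X$. Apply Proposition \ref{prop:min_of_nu} to obtain some $x^{*}\in\Gamma X$ with
\[
\nu(\hat A_{x^{*}})=\inf\set{\nu(\hat A_{z}):z\in\Gamma X}.
\]
Since $\hat A_{x^{*}}$ is invertible, this infimum $c:=\nu(\hat A_{x^{*}})$ is strictly positive. Therefore $\|\hat A_x^{-1}\|=\nu(\hat A_x)^{-1}\le c^{-1}$ for every $x\in\Gamma X$, and by the block decomposition above also $\|A_x^{-1}\|\le c^{-1}$ uniformly in $x\in\Gamma X$. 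This establishes $(b)$, closing the equivalence. The final norm estimate $\|A_x^{-1}\|\le\|(A+\calK(M^p))^{-1}\|$ is then just a restatement of the quantitative content of Theorem \ref{thm:Fredholm_implies_lim-ops_invertible}. The genuinely hard work is packaged in Proposition \ref{prop:min_of_nu}, without which one could not extract a single ``worst'' limit operator from $\Gamma X$ to anchor the uniform bound.
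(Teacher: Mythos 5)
Your proposal is correct and follows essentially the same route as the paper: (a)$\Leftrightarrow$(b) via Theorems \ref{thm:Fredholm_implies_lim-ops_invertible} and \ref{thm:invertiblility_and_uniform_bound_implies_Fredholm}, the block relation $\hat{A}_x^{-1}=A_x^{-1}P_x+Q_x$ for (c)$\Leftrightarrow$(d), and Proposition \ref{prop:min_of_nu} together with $\nu(\hat{A}_x)=\|\hat{A}_x^{-1}\|^{-1}$ to get the uniform bound; your write-up merely spells out the minimizer argument that the paper compresses into one line in its step (d)$\Rightarrow$(b).
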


\begin{proof}
  We will prove the theorem in the following order: (a) $\Rightarrow$ (b) $\Rightarrow$ (c) $\Rightarrow$ (d) $\Rightarrow$ (b) $\Rightarrow$ (a). As everything is already covered above, only a few notes are necessary.
  
  $\bullet$ ``(a) $\Rightarrow$ (b)'': This is Theorem \ref{thm:Fredholm_implies_lim-ops_invertible}.
  
  $\bullet$ ``(b) $\Rightarrow$ (c)'': This is clear.
  
  $\bullet$ ``(c) $\Rightarrow$ (d)'': If $x\in\Gamma X$ and $A_x$ is invertible, then $(\hat{A}_x)^{-1} = A_x^{-1} P_x + Q_x$ and thus $\hat{A}_x$ is invertible.
  
  $\bullet$ ``(d) $\Rightarrow$ (b)'': Proposition \ref{prop:min_of_nu} implies $\sup\limits_{x\in\bdry{X}} \|\hat{A}_x^{-1}\|<\infty$. As $A_x^{-1} = \hat{A}_x^{-1}|_{\ran P_x}$, we get $\sup\limits_{x\in\bdry{X}} \norm{A_x^{-1}}<\infty$.
  
  $\bullet$ ``(b) $\Rightarrow$ (a)'': This is Theorem \ref{thm:invertiblility_and_uniform_bound_implies_Fredholm}.
  
  $\bullet$ The last statement is again Theorem \ref{thm:Fredholm_implies_lim-ops_invertible}.
\end{proof}

By exploiting the equivalence of (a) and (c) in terms of spectral theory, we obtain the following corollary.

\begin{corollary}
\label{cor:spectral_equality}
  Let $A\in\calA^p$. Then
  \[\bigcup_{x\in \bdry{X}} \sigma(A_x) = \sigma_{\ess}(A).\]
\end{corollary}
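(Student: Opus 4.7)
The plan is to read Theorem \ref{thm:Fredholm_equivalence} pointwise in the spectral parameter $\lambda\in\C$. By definition, $\lambda\in\sigma_{\ess}(A)$ exactly when $A-\lambda I_{M^p}$ fails to be Fredholm, and $\lambda\in\sigma(A_x)$ exactly when $A_x-\lambda I_{\ran(P_x)}$ fails to be invertible on $\ran(P_x)$. Hence the corollary reduces to the compatibility identity
\[
(A-\lambda I_{M^p})_x \;=\; A_x - \lambda\, I_{\ran(P_x)} \qquad (x\in\beta X,\ \lambda\in\C),
\]
together with the equivalence (a)$\Leftrightarrow$(c) of Theorem \ref{thm:Fredholm_equivalence}.

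To establish this identity, first I would check that $I_{M^p}\in\calA^p$ (immediate, since $I_{M^p}\cdot P=P\in\BDO^p$ by Assumption \ref{ass:subspaces_and_projection}), so that Theorem \ref{thm:Ap_properties} places $A-\lambda I_{M^p}$ in $\calA^p$. Then, unfolding Definition \ref{def:limit_operator}, one has
\[
(I_{M^p})_x \;=\; \wlim_{x_\iota\to x} U_{x_\iota}^p P (U_{x_\iota}^p)^{-1}\big|_{\ran(P_x)} \;=\; P_x\big|_{\ran(P_x)},
\]
which is the identity on $\ran(P_x)$ by Lemma \ref{lem:range_of_limops}(i). Combining this with the additivity of the limit operator map from Proposition \ref{prop:limit_operator_properties}(b) yields the displayed compatibility identity.

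With that in place, the chain of equivalences
\begin{align*}
\lambda\notin \sigma_{\ess}(A)
&\iff A-\lambda I_{M^p}\ \text{is Fredholm}\\
&\iff (A-\lambda I_{M^p})_x\ \text{is invertible on}\ \ran(P_x)\ \text{for every}\ x\in\bdry{X}\\
&\iff A_x-\lambda I_{\ran(P_x)}\ \text{is invertible for every}\ x\in\bdry{X}\\
&\iff \lambda \notin \bigcup_{x\in\bdry{X}} \sigma(A_x)
\end{align*}
closes the argument, where the second equivalence is Theorem \ref{thm:Fredholm_equivalence}(a)$\Leftrightarrow$(c).

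I do not anticipate any serious obstacle. The only subtle point is that the ambient space $\ran(P_x)$ on which $A_x$ acts may depend on $x$ and differ from $M^p$; but this is precisely what the identification $(I_{M^p})_x = I_{\ran(P_x)}$ takes care of. The substantive content has already been packed into Theorem \ref{thm:Fredholm_equivalence}, and the corollary is an essentially formal spectral translation of it.
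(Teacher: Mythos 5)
Your proposal is correct and follows exactly the route the paper intends: the corollary is obtained by applying the equivalence (a)$\Leftrightarrow$(c) of Theorem \ref{thm:Fredholm_equivalence} to $A-\lambda I_{M^p}$, using $(A-\lambda I_{M^p})_x = A_x - \lambda I_{\ran(P_x)}$. Your extra check that $(I_{M^p})_x$ is the identity on $\ran(P_x)$ via Lemma \ref{lem:range_of_limops} and Proposition \ref{prop:limit_operator_properties} is a careful and accurate filling-in of the detail the paper leaves implicit.
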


\section{Extension to different compactifications}
\label{sec:compactifications}

In this short section we extend Theorem \ref{thm:Fredholm_equivalence} and Corollary \ref{cor:spectral_equality} to other compactifications of $X$. This can be achieved rather easily due to the fact that the Stone-\u{C}ech-compactification has a universal property. 

\begin{proposition}
\label{prop:different_compactification}
Let $\tilde{X}$ be a compactification of $X$ and $\partial X := \tilde{X} \setminus X$ the boundary. Assume $A\in\calA^p$ and that for all $y \in \partial X$ and all nets $(y_\iota)_\iota$ in $X$ with $y_\iota\to y$ the following limits exist
  \[P_y := \wlim\limits_{y_\iota\to y} U_{y_\iota}^p P(U_{y_\iota}^p)^{-1}, \quad A_y := \wlim\limits_{y_\iota\to y} U_{y_\iota}^p AP(U_{y_\iota}^p)^{-1}|_{\ran(P_y)}.\]
Then $A$ is Fredholm if and only if $A_y$ is invertible for all $y \in \partial X$. Moreover,
\[\bigcup\limits_{y \in \partial X} \sigma(A_y) = \sigma_{\ess}(A).\]
\end{proposition}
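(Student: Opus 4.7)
The plan is to exploit the universal property of the Stone-\v{C}ech compactification in order to reduce the statement to Theorem~\ref{thm:Fredholm_equivalence} and Corollary~\ref{cor:spectral_equality}. Since $\tilde{X}$ is a compact Hausdorff space and $X \hookrightarrow \tilde{X}$ is continuous, the inclusion extends to a continuous surjection $\pi \from \beta X \to \tilde{X}$ with $\pi|_X = \id_X$. A standard argument shows $\pi(\Gamma X) = \partial X$: $\pi$ is surjective because $\pi(\beta X)$ is compact hence closed and contains the dense subset $X$; conversely, if $x \in \Gamma X$ and $(x_\iota)$ is a net in $X$ with $x_\iota \to x$ in $\beta X$, then $x_\iota \to \pi(x)$ in $\tilde{X}$, and $\pi(x)$ cannot lie in $X$ because otherwise $(x_\iota)$ would converge in $X$ and thus in $\beta X$ to a point of $X$, contradicting $x \in \Gamma X$.

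The key step is to identify the two families of limit operators. For $x \in \Gamma X$ set $y := \pi(x) \in \partial X$, and pick a net $(x_\iota)$ in $X$ with $x_\iota \to x$ in $\beta X$. By continuity of $\pi$, also $x_\iota \to y$ in $\tilde{X}$. The hypothesis guarantees that the weak limits
\[
P_y = \wlim_{x_\iota\to x} U_{x_\iota}^p P (U_{x_\iota}^p)^{-1}, \qquad A_y = \wlim_{x_\iota\to x} U_{x_\iota}^p AP (U_{x_\iota}^p)^{-1}\big|_{\ran(P_y)}
\]
do not depend on the chosen net. Comparing with the definition of limit operators along $\beta X$ (Definition~\ref{def:limit_operator}), we obtain $P_y = P_x$ and $A_y = A_x$. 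Conversely, each $y \in \partial X$ is of the form $y = \pi(x)$ for some $x \in \Gamma X$ by the surjectivity above.

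Consequently, $\set{A_y : y \in \partial X} = \set{A_x : x \in \Gamma X}$ as families of operators. Theorem~\ref{thm:Fredholm_equivalence} (equivalence of (a) and (c)) immediately yields the Fredholm characterisation: $A$ is Fredholm if and only if every $A_x$ with $x \in \Gamma X$ is invertible, if and only if every $A_y$ with $y \in \partial X$ is invertible. Likewise, Corollary~\ref{cor:spectral_equality} gives
\[
\bigcup_{y \in \partial X} \sigma(A_y) = \bigcup_{x \in \Gamma X} \sigma(A_x) = \sigma_{\ess}(A),
\]
completing the proof. The only genuine point to verify is the equality of limits above, which is essentially formal once the existence assumption in the proposition is in place; no uniform bound argument is needed because the uniform bound $\sup_{y \in \partial X}\|A_y^{-1}\| < \infty$ on the ``smaller'' index set $\partial X$ is transferred for free from the corresponding bound on $\Gamma X$ supplied by Theorem~\ref{thm:Fredholm_equivalence}.
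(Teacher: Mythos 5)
Your proposal is correct and follows essentially the same route as the paper: both arguments reduce the statement to showing that the family of limit operators along $\partial X$ coincides with the family along $\Gamma X$ (the paper via a subnet/compactness argument and uniqueness of weak limits, you via the canonical surjection $\pi\from\beta X\to\tilde{X}$ from the universal property, which is the same reduction in slightly more explicit form), and then both invoke Theorem \ref{thm:Fredholm_equivalence} and Corollary \ref{cor:spectral_equality}. Your closing remark that no separate uniform-bound argument is needed is also consistent with the paper, since the equivalence (a)$\Leftrightarrow$(c) there already absorbs it.
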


\begin{proof}
Let $x \in \Gamma X$ and choose a net $(x_\iota)_\iota$ in $X$ with $x_\iota\to x$. By compactness, there is a subnet of $(x_\iota)_\iota$ that converges in $\tilde{X}$ to some $y \in \partial X$. As weak limits are unique, we have $P_y = P_x$ and $A_y = A_x$. Hence every limit operator with respect to $\beta X$ is also a limit operator with respect to the compactification $\tilde{X}$. Conversely, every limit operator with respect to $\tilde{X}$ is also a limit operator with respect to $\beta X$. Hence, since the sets of limit operators are the same, the proposition follows from Theorem \ref{thm:Fredholm_equivalence}.
\end{proof}

\begin{remark} \label{rem:sequences}
In fact, it is even possible to replace the nets by sequences. Let $A\in\calA^p$. Our assumptions on the measure space imply that $L_p(X,\mu)$ is separable and hence the strong operator topology on $\calL\bigl(L_p(X,\mu)\bigr)$ is metrizable on bounded sets. Therefore the sequential closure of the bounded set $\set{U_y^pAP(U_y^p)^{-1} :\; y \in X}$ coincides with the topological closure. In particular, every limit operator of $A$ is the strong limit of a sequence $\bigl(U_{y_n}^pAP(U_{y_n}^p)^{-1}\bigr)_{n \in \N}$ with $y_n \in X$. However, not every sequence $(y_n)_{n \in \N}$ tending to infinity (i.e.~$d(x_0,y_n) \to \infty$) yields a limit operator as the strong limit may not exist. As the closure of $\set{U_y^pAP(U_y^p)^{-1} :\; y \in X}$ is strongly compact by Theorem \ref{thm:limit_operator_existence}, every such sequence has a strongly convergent subsequence, though.
\end{remark}

\section{Applications and special cases}
\label{sec:applications}

In this section we showcase the strength of Theorem \ref{thm:Fredholm_equivalence} by providing some examples which satify our Assumptions \ref{ass:space}-\ref{ass:shifts}. Some of these examples are new and others are well-known. In the latter case we therefore rediscover known results.

\subsection{The trivial case}

Let $(X,d,\mu)$ be a metric measure space that satisfies Assumptions \ref{ass:space}-\ref{ass:shifts} and assume that $P$ is compact. Proposition \ref{prop:limit_operators_of_compact_operators} then implies that $P_x = 0$ for all $x \in \Gamma X$. Hence every possible limit operator acts only on the null space and is therefore invertible. This implies that every operator $A \in \calA^p$ is Fredholm. No surprise there, obviously.

\subsection{Operators on $\Z^n$}

Here, we recover the classical setup for limit operators. We have $X:=\Z^n$ with the usual metric $d$, and $\mu$ the counting measure. Clearly, $(\Z^n,d)$ is proper, has bounded geometry and finite asymptotic dimension (hence property \Aprime{} by Proposition \ref{prop:asymp_dim}, for an explicit construction see \cite[p.~83]{Lindner2006}). Also, $\mu$ is a Radon measure, hence Assumption \ref{ass:space} is satisfied. Let $M^p:=L_p(\Z^n,\mu) = \ell_p(\Z^n)$. Then $P = I$ and $M_{\1_K}$ is compact for every compact subset $K\subseteq \Z^n$. This implies Assumption \ref{ass:subspaces_and_projection}. Let $x_0 = 0$ and $\phi_x(y) = y+x$ for all $y \in \Z^n$. Since $\mu$ is translation invariant, $h_x=\1$, and the operators $U_x^p$ are just the usual shift operators. Therefore Assumption \ref{ass:shifts} is satisfied as well. Thus, Theorem \ref{thm:Fredholm_equivalence} recovers \cite[Theorem 11]{LindnerSeidel2014} in the scalar valued case (cf.~Remark \ref{rem:sequences}).

In fact, any Radon measure $\mu$ with $\mu(\set{x}) \neq 0$ for all $x \in \Z^n$ will work as all the other assumptions are trivial in this case.

\subsection{Operators on $\N^n$}

Choosing $M^p = \ell_p(\N^n)$ with the canonical projection $P \from \ell_p(\Z^n) \to \ell_p(\N^n)$ yields another example. In this case the domain of a limit operator depends on $x \in \beta X$. In the case $n = 1$, for example, we have $P_x = I_{\ell_p(\Z)}$ if $x$ is a limit point at $+\infty$ and $P_x = 0$ if $x$ is a limit point at $-\infty$. For $n > 1$ the situation is much more complicated as every direction in $\Z^n$ yields a different projection $P_x$ and hence a different domain for the limit operators.

\subsection{Discrete groups}

Let $(X,d)$ be an unbounded proper discrete group that satisfies property \Aprime{} (e.g.~by having finite asymptotic dimension or satisfying property \A{}, cf.~Theorem \ref{thm:A_equiv_A'} and Proposition \ref{prop:asymp_dim}). Let $\mu$ be the counting measure and $\phi_x \from X \to X$ the group action $\phi_x(y) = y \circ x$ with $x_0 = e$, the identity element. Choose any subset (not necessarily a subgroup) $Y \subseteq X$ and set $M^p = \ell_p(Y) := L_p(Y,\mu)$. $M^p$ is a closed subspace of $\ell_p(X) = L_p(X,\mu)$ and the canonical projection $P$ satisfies all assumptions. To see that $x \mapsto M_{\1_K}U_x^pP(U_x^p)^{-1}M_{\1_{K'}}$ extends continuously to $\beta X$ for all compact subsets $K,K' \subseteq X$ just observe that this is equivalent to extending $x \mapsto U_x^pP(U_x^p)^{-1}$ with respect to the weak operator topology, which is always possible by Proposition \ref{prop:weak_limit_operators}.

\subsection{The Fock space ($\C^n$)}

Let $X = \C^n$ with the Euclidean metric $d$ and $p \in (1,\infty)$. $(X,d)$ is proper, has bounded geometry and finite asymptotic dimension. Let $\mu$ be a Gaussian measure on $\C^n$, i.e.~$\mathrm{d}\mu(z) = \left(\frac{p\alpha}{2\pi}\right)^n e^{-\frac{p\alpha}{2}|z|^2} \mathrm{d}z$ for some $\alpha > 0$, and $M^p$ the closed subspace of entire functions in $L_p(\C^n,\mu)$. By \cite[Theorem 7.1]{JansonPeetreRochberg1987} the orthogonal projection $P \from L_2(\C^n,\mu) \to M^2$ extends to a bounded projection $P \from L_p(\C^n,\mu) \to M^p$. For $\phi_x$ we choose the usual translations given by $\phi_x(y) = y+x$ for all $x,y \in \C^n$. Then
\[\frac{\mathrm{d}(\mu \circ \phi_x)}{\mathrm{d}\mu}(y) = e^{-\frac{p\alpha}{2}|x|^2-p\alpha\Re\dupa{y}{x}}.\]
It is natural to choose $h_x(y) := e^{-\frac{\alpha}{2}|x|^2-\alpha\dupa{y}{x}}$ so that $h_x$ is an entire function again. As $P$ is given explicitly as an integral operator, it is easy to check that $M_{\1_K}P$, $PM_{\1_K} \in \calK(L_p(\C^n,\mu))$ for all $K \subseteq X$ compact and $U_x^pP = PU_x^p$ (see e.g.~\cite[Proposition 7, Lemma 17]{FulscheHagger2019}). Therefore the assumptions are again satisfied and we recover \cite[Theorem 28]{FulscheHagger2019}.

\subsection{Generalized Fock spaces}

Choosing $X = \C^n$ as above but changing the measure $\mu$ gives rise to more interesting examples. For example, let us choose $n = 1$, $m \in \N$, $p \in (1,\infty)$ and $\mathrm{d}\mu(z) = \left(\frac{p}{2}\right)^{\frac{mp}{2}+1}\frac{1}{\pi\Gamma(\frac{mp}{2}+1)}\abs{z}^{mp}e^{-\frac{p}{2}\abs{z}^2} \, \mathrm{d}z$, where the constant is chosen such that $\mu(\C) = 1$. Let $M^p$ be the closed subspace of entire functions in $L_p(\C,\mu)$ (the so-called Fock-Sobolev spaces \cite{ChoZhu2012}) and $P \from L_2(\C,\mu) \to M^2$ the orthogonal projection. $P$ is given as the integral operator
\[[Pf](z) = \frac{1}{\pi m!}\int_{\C} f(w)K(z,w)\abs{w}^{2m}e^{-\abs{w}^2} \, \mathrm{d}w,\]
where $K(z,w) = \frac{m!}{(z\bar{w})^m}\Bigl(e^{z\bar{w}}-\sum\limits_{k = 0}^{m-1} \frac{(z\bar{w})^k}{k!}\Bigr)$ is the reproducing kernel. As for the standard Fock spaces, $P$ extends to a bounded projection $P \from L_p(\C,\mu) \to M^p$. With the usual dual pairing, we have
\begin{align*}
\dupa{Pf}{g} &= \left(\frac{1}{\pi m!}\right)^2\int_{\C} \int_{\C} f(w)K(z,w)\abs{w}^{2m}e^{-\abs{w}^2} \, \mathrm{d}w \, g(z)\abs{z}^{2m}e^{-\abs{z}^2} \, \mathrm{d}z\\
&= \left(\frac{1}{\pi m!}\right)^2\int_{\C} \int_{\C} f(w)\abs{w}^me^{-\frac{1}{2}\abs{w}^2} K(z,w)\abs{zw}^me^{-\frac{1}{2}(\abs{z}^2+\abs{w}^2)} \, \mathrm{d}w \, g(z)\abs{z}^{m}e^{-\frac{1}{2}\abs{z}^2} \mathrm{d}z
\end{align*}
for $f \in L_p(\C,\mu)$, $g \in L_q(\C,\mu)$ and $\frac{1}{p} + \frac{1}{q} = 1$. The estimate
\[\abs{K(z,w)}\abs{zw}^me^{-\frac{1}{2}(\abs{z}^2+\abs{w}^2)} \leq Ce^{-\delta|z-w|^2}\]
for constants $C,\delta > 0$ now immediately implies $P \in \BDO^p$ because it is approximated by the integral operators with kernel $K(z,w)\1_{B(0,R)}(z-w)$ as $R \to \infty$. Similarly, using the Hille-Tamarkin theorem (a variant of Hilbert-Schmidt for $p \neq 2$, see e.g.~\cite[Theorem 41.6]{Zaanen}) one can easily show $M_{\1_K}P$, $PM_{\1_K} \in \calK(L_p(\C,\mu))$. Finally, we show that $x \mapsto M_{\1_K}U_x^pP(U_x^p)^{-1}M_{\1_{K'}}$ has a continuous extension to $\beta\C$ for all compact subsets $K,K' \subseteq \C$. We have
\[\frac{\mathrm{d}(\mu \circ \phi_x)}{\mathrm{d}\mu}(y) = \frac{|y+x|^{mp}}{|y|^{mp}} e^{-\frac{p}{2}|x|^2-p\Re\dupa{y}{x}},\]
therefore we choose $h_x(y) := \frac{(y+x)^m}{y^m}e^{-\frac{1}{2}|x|^2-\dupa{y}{x}}$. It follows
\begin{align*}
&[U_x^pP(U_x^p)^{-1}f](z)\\
&= \frac{1}{\pi m!}\int_{\C} f(y)\frac{y^m}{(y+x)^m}e^{\frac{1}{2}|x|^2+y\bar{x}}\frac{m!}{(z+x)^m(\bar{y}+\bar{x})^m}\left(e^{(z+x)(\bar{y}+\bar{x})} - \sum\limits_{k = 0}^{m-1} \frac{(z+x)^k(\bar{y}+\bar{x})^k}{k!}\right)\\
&\qquad \qquad \qquad |y+x|^{2m}e^{-|y+x|^2}\frac{(z+x)^m}{z^m}e^{-\frac{1}{2}|x|^2-z\bar{x}} \, \mathrm{d}y\\
&= \frac{e^{\frac{1}{2}|z|^2}}{\pi z^m}\int_{\C} f(y)\Bigl(e^{-\frac{1}{2}|z-y|^2}e^{i\Im(z\bar{y})} - \sum\limits_{k = 0}^{m-1} \frac{(z+x)^k(\bar{y}+\bar{x})^k}{k!}e^{-\frac{1}{2}|y+x|^2-\frac{1}{2}|z+x|^2}e^{i\Im(y\bar{x})+i\Im(x\bar{z})}\Bigr)\\
&\qquad \qquad \qquad \quad y^m e^{-\frac{1}{2}|y|^2} \, \mathrm{d}y.
\end{align*}
Define
\[[P_{\infty}f](z) = \frac{e^{\frac{1}{2}|z|^2}}{\pi z^m}\int_{\C} f(y)e^{-\frac{1}{2}|z-y|^2}e^{i\Im(z\bar{y})} y^m e^{-\frac{1}{2}|y|^2} \, \mathrm{d}y.\]
Let $\frac{1}{p} + \frac{1}{q} = 1$. Then, using H\"older's inequality, we get
\begin{align*}
&\sup\limits_{\norm{f}_p = 1} \norm{M_{\1_K}\left(U_x^pP(U_x^p)^{-1} - P_{\infty}\right)M_{\1_{K'}}f}_p^p\\
&\qquad \qquad \qquad \qquad \leq c_{m,p} \sup\limits_{\norm{f}_p = 1}\int_K \left(\int_{K'} |f(y)|\sum\limits_{k = 0}^{m-1} \frac{|z+x|^k|\bar{y}+\bar{x}|^k}{k!}e^{-\frac{1}{2}|y+x|^2-\frac{1}{2}|z+x|^2}|y|^m e^{-\frac{1}{2}|y|^2} \, \mathrm{d}y\right)^p \, \mathrm{d}z\\
&\qquad \qquad \qquad \qquad \leq c_{m,p}' \int_K \left(\int_{K'} \left(\sum\limits_{k = 0}^{m-1} \frac{|z+x|^k|\bar{y}+\bar{x}|^k}{k!}e^{-\frac{1}{2}|y+x|^2-\frac{1}{2}|z+x|^2}\right)^q \, \mathrm{d}y\right)^{\frac{p}{q}} \, \mathrm{d}z\\
&\qquad \qquad \qquad \qquad \to 0
\end{align*}
as $|x| \to \infty$, where $c_{m,p}$ and $c_{m,p}'$ are some irrelevant constants coming from the measure. Therefore we can extend $x \mapsto M_{\1_K}U_x^pP(U_x^p)^{-1}M_{\1_{K'}}$ continuously to $\Gamma\C$ by $P_x = P_{\infty}$ for all $x \in \Gamma\C$.

\subsection{Operators on Bergman spaces}

Let $X$ be a bounded symmetric domain in its standard realization as a convex circular subset of $\C^n$ containing the origin and let $d$ be the corresponding Bergman metric. It is known that $(X,d)$ is proper, unbounded (as a metric space), has bounded geometry and finite asymptotic dimension.

Let $h$ denote the Jordan triple determinant\footnote{We refer to \cite{Englis1999} and \cite{FarautKoranyi1990} for definitions and properties of $h$.} corresponding to $X$ and let $\mu$ be the standard weighted Lebesgue measure given by
\[\mathrm{d}\mu(z) = c_{\nu} h(z,z)^{\nu} \, \mathrm{d}z,\]
where $\nu > -1$ and $c_{\nu}$ is chosen such that $\mu(X) = 1$. It is clear that $\mu$ is a Radon measure. Let $M^p$ denote the subspace of holomorphic functions in $L_p(X,\mu)$. It is shown in \cite{Hagger2017} that for suitable values of $p$ and $\nu$ there is a (bounded) band-dominated projection $P \from L_p(X,\mu) \to M^p$. Moreover, it is shown that in these cases $M_{\1_K}P$ and $PM_{\1_K}$ are compact for compact subsets $K \subseteq X$.

For every $x \in X$ we can choose biholomorphic maps $\phi_x \from X \to X$ with $\phi_x(0) = x$ and $x \mapsto \phi_x(y)$ continuous for all $y \in X$ (\cite[Lemma 5]{Hagger2019}). Moreover, the standard transformation properties of $\mu$ show that $\mu\circ\phi_x \ll \mu \ll \mu\circ \phi_x$ and we can choose
\[h_x(y) = \frac{h(x,x)^{\frac{\nu+g}{p}}}{h(y,x)^{\frac{2(\nu+g)}{p}}},\]
where $g$ denotes the genus of $X$ (see again \cite{Englis1999,Hagger2019}). As $h(y,x)$ is actually a polynomial in $y$ and $\overline{x}$, and has no zeroes in $X \times X$, $x \mapsto h_x(y)$ is continuous for all $y \in X$ as well. It remains to show that $x \mapsto M_{\1_K}U_x^pP(U_x^p)^{-1}M_{\1_{K'}}$ extends continuously to the Stone-\v{C}ech compactification $\beta X$. In \cite{Hagger2019} a differently weighted projection was used for $p \neq 2$ to achieve $U_x^pP = PU_x^p$, which trivializes the condition. Here we stick to the standard projection to show what happens. Using the explicit formula for $P$, i.e.
\[[Pf](z) = \int_X f(y)h(z,y)^{-\nu-g} \, \mathrm{d}\mu(y),\]
and the standard transformation properties of $h$ and $\mu$ (see again \cite{Englis1999,Hagger2019}), we arrive at
\[[U_x^pP(U_x^p)^{-1}f](z) = h(z,x)^{(\nu+g)(1-\frac{2}{p})}\int_X f(y)h(y,x)^{-(\nu+g)(1-\frac{2}{p})}h(z,y)^{-\nu-g} \, \mathrm{d}\mu(y).\]
Choosing $g_x(z) := h(z,x)^{(\nu+g)(1-\frac{2}{p})}$, we therefore get 
\[U_x^pP(U_x^p)^{-1} = M_{g_x}PM_{g_x^{-1}}.\]
As $h$ is a polynomial, it extends continuously to $K \times \overline{X}$, where $K \subseteq X$ is compact and $\overline{X}$ is the Euclidean closure of $X$. Moreover, as the extension has no zeroes in $X \times \overline{X}$, it is bounded from below on $K \times \overline{X}$. This implies that $x \mapsto M_{g_x\1_K}$ and $x \mapsto M_{g_x^{-1}\1_{K'}}$ extend continuously to the Euclidean closure of $X$ and hence to $\beta X$. This implies that $M_{\1_K}U_x^pP(U_x^p)^{-1}M_{\1_{K'}}$ extends continuously as well. Hence all assumptions are satisfied and we recover \cite[Theorem B]{Hagger2019}.

The above obviously also works for reducible domains like the polydisk. More generally, if two spaces $(X,d_X,\mu_X)$ and $(Y,d_Y,\mu_Y)$ both satisfy the Assumptions \ref{ass:space} - \ref{ass:shifts}, then the product space $X \times Y$ also satisfies the assumptions with the obvious choices for the measure etc.

\subsection{Pluriharmonic Bergman spaces}

Consider the Bergman spaces from above and replace the space of holomorphic functions by the space of pluriharmonic functions, i.e.
\[M^p := \set{f \from X \to \C : f = f_1 + \bar{f}_2; \, f_1,f_2 \text{ holomorphic}, f_2(0) = 0}.\]
The corresponding projection is $P + \overline{P} - P_{\C}$, where
\[[\overline{P}f](z) = \int_X f(y) h(y,z)^{-\nu-g} \, \mathrm{d}\mu(y)\]
and $P_{\C}$ is the rank-$1$ projection onto the constant functions. As $\overline{P}$ is very similar to $P$, $\overline{P} \in \BDO^p$ and the compactness of $M_{\1_K}\overline{P}$ and $\overline{P}M_{\1_K}$ are clear (just repeat the proofs for $P$ in \cite{FulscheHagger2019}). Moreover, we have
\[[U_x^p\overline{P}(U_x^p)^{-1}f](z) = \frac{h(x,z)^{\nu+g}}{h(z,x)^{\frac{2(\nu+g)}{p}}}\int_X f(y)\frac{h(y,x)^{\frac{2(\nu+g)}{p}}}{h(x,y)^{\nu+g}}h(y,z)^{-\nu-g} \, \mathrm{d}\mu(y).\]
As above this implies that $x \mapsto M_{\1_K}U_x^p\overline{P}(U_x^p)^{-1}M_{\1_{K'}}$ extends continuously to $\beta X$. As $P_{\C}$ is compact, the corresponding assumptions are trivially satisfied. It follows that $P + \overline{P} - P_{\C}$ also satisfies the assumptions. Theorem \ref{thm:Fredholm_equivalence} therefore also applies to pluriharmonic Bergman spaces.

More generally, if we have two projections $P,P'$ that satisfy the assumptions, then $P+P'$ also satisfies the assumptions provided that $P+P'$ is again a projection, i.e.~if $PP' = P'P = 0$. Similarly, using Lemma \ref{lem:compact_convergence}, $PP'$ satisfies the assumptions if $P$ and $P'$ commute. Hence Theorem \ref{thm:Fredholm_equivalence} also holds with $M^p := \ran(P+P')$ and $M^p := \ran(PP')$, respectively.

\subsection{Vector valued function spaces}

Assume that $(X,d,\mu)$ satisfies Assumptions \ref{ass:space} - \ref{ass:shifts}. Since the vector valued $L_p$-space
\[L_p(X,\mu;\C^n) := \set{f \from X \to \C^n : \norm{f}_p^p := \sum\limits_{k = 1}^n \norm{f_k}_p^p < \infty}\]
is isometrically isomorphic to $L_p(X \times \set{0,\ldots,n-1},\mu \times \nu)$, where $\nu$ is the counting measure on $\set{0,\ldots,n-1}$, we can also cover these cases here. For the metric we choose $d'(x',y') := d(x,y) + \min\{|j-k|,|j-k+n|,|j-k-n|\}$ for $x' := (x,j)$ and $y' := (y,k)$ in $X' := X \times \set{0,\ldots,n-1}$. Clearly, $(X',d')$ is again a proper metric space of bounded geometry that satisfies property \Aprime{}. Moreover, $\mu' := \mu \times \nu$ is obviously Radon. As our closed subspace we choose $M^p \times \set{0,\ldots,n-1}$ and our projection is $P \otimes I_{\set{0,\ldots,n-1}}$. Moreover, for $x' = (x,j)$ and $y' = (y,k)$ we set $\phi_{x'}(y') := (\phi_x(y),k+j \mod n)$. Obviously, the neutral element is $(x_0,0)$ here. All assumptions now follow directly from the corresponding assumptions for $X$.

\begin{remark}
  In order to cover also Banach space-valued $L_p$-spaces $L_p(X,\mu;Y)$ with an infinite-dimensional Banach space $Y$ (as well as the cases $p\in\set{1,\infty}$) we can employ the so-called $\mathcal{P}$-theory; cf.\ \cite{RabinovichRochSilbermann2004,Seidel2014}. This extension may appear elsewhere.
\end{remark}

\noindent
Raffael Hagger \\
Leibniz-Universit\"at Hannover \\
Fakult\"at f\"ur Mathematik und Physik \\
Fachgebiet Mathematik \\
Welfengarten 1\\
30167 Hannover, Germany \\
{\tt raffael.hagger@math.uni-hannover.de}\\
and\\
University of Reading \\
Department of Mathematics and Statistics\\
Whiteknights, PO Box 220\\
Reading RG6 6AX, United Kingdom\\
{\tt r.t.hagger@reading.ac.uk}

\bigskip

\noindent
Christian Seifert \\
Technische Universit\"at Hamburg \\
Institut f\"ur Mathematik \\
Am Schwarzenberg-Campus 3 \\
21073 Hamburg, Germany \\
{\tt christian.seifert@tuhh.de}\\
and\\
Technische Universit\"at Clausthal \\
Insitut f\"ur Mathematik \\
Erzstra{\ss}e 1\\
38678 Clausthal-Zellerfeld, Germany \\
{\tt christian.seifert@tu-clausthal.de}

\end{document}